\documentclass[reqno,twoside,12pt,draft]{amsart}

\textwidth 16cm

\textheight 21.5cm

\hoffset - 1.5 cm

\voffset 0 cm

\marginparwidth 1.7 cm

\DeclareMathSizes{10}{18}{12}{8}   
\usepackage[T1]{fontenc}
\usepackage[final]{graphicx}
\usepackage{epstopdf}
\usepackage{wrapfig}
\usepackage[labelfont=scriptsize,labelfont=bf,textfont=scriptsize]{caption}
\usepackage[labelfont=scriptsize,textfont=scriptsize,justification=raggedright,labelformat=parens]{subcaption}
\usepackage{pict2e}
\usepackage{amstext}
\usepackage{pstricks,pst-node,pst-text,pst-3d}
\usepackage{amsmath}
\usepackage{amsthm}
\usepackage{amssymb}
\usepackage{todonotes}
\usepackage{lipsum}
\reversemarginpar

\newtheorem{Theorem}{Theorem}[section]
\newtheorem{Fact}{Fact}[section]
\newtheorem{Lemma}{Lemma}[section]
\newtheorem{Proposition}{Proposition}[section]
\newtheorem{Corollary}{Corollary}[section]
\theoremstyle{definition}
\newtheorem{Definition}{Definition}[section]
\newtheorem{Example}{Example}[section]
\newtheorem{Remark}{Remark}[section]
\newtheorem{Hipothesis}{Hipothesis}[section]

\newcommand{\ba}{\begin{array}}
\newcommand{\bc}{\begin{center}}
\newcommand{\bd}{\begin{description}}
\newcommand{\bdm}{\begin{displaymath}}
\newcommand{\be}{\begin{enumerate}}
\newcommand{\beq}{\begin{equation}}
\newcommand{\bdf}{\begin{Definition}}
\newcommand{\bex}{\begin{Example}}
\newcommand{\bft}{\begin{Fact}}
\newcommand{\bl}{\begin{Lemma}}
\newcommand{\bp}{\begin{Proposition}}
\newcommand{\br}{\begin{Remark}}
\newcommand{\bt}{\begin{Theorem}}
\newcommand{\bco}{\begin{Corollary}}
\newcommand{\bh}{\begin{Hipothesis}}
\newcommand{\ea}{\end{array}}
\newcommand{\ec}{\end{center}}
\newcommand{\ed}{\end{description}}
\newcommand{\edm}{\end{displaymath}}
\newcommand{\ee}{\end{enumerate}}
\newcommand{\eeq}{\end{equation}}
\newcommand{\edf}{\end{Definition}}
\newcommand{\eex}{\end{Example}}
\newcommand{\eft}{\end{Fact}}
\newcommand{\el}{\end{Lemma}}
\newcommand{\ep}{\end{Proposition}}
\newcommand{\er}{\end{Remark}}
\newcommand{\et}{\end{Theorem}}
\newcommand{\eco}{\end{Corollary}}
\newcommand{\eh}{\end{Hipothesis}}




\newcommand{\numsec}{\setcounter{Theorem}{0}\setcounter{Definition}{0}
\setcounter{Remark}{0} \setcounter{Lemma}{0} \setcounter{Fact}{0}
\setcounter{Proposition}{0} \setcounter{Corollary}{0}
\setcounter{Example}{0} \setcounter{equation}{0}
\setcounter{Property}{0}\renewcommand\theequation{\arabic{section}.\arabic{equation}}
\renewcommand\theTheorem{\arabic{section}.\arabic{Theorem}}
\renewcommand\theDefinition{\arabic{section}.\arabic{Definition}}
\renewcommand\theRemark{\arabic{section}.\arabic{Remark}}
\renewcommand\theLemma{\arabic{section}.\arabic{Lemma}}
\renewcommand\theFact{\arabic{section}.\arabic{Fact}}
\renewcommand\theProposition{\arabic{section}.\arabic{Proposition}}
\renewcommand\theCorollary{\arabic{section}.\arabic{Corollary}}
\renewcommand\theExample{\arabic{section}.\arabic{Example}}
\renewcommand\theProperty{\arabic{section}.\arabic{Property}}}


\numberwithin{equation}{section} \errorcontextlines=0

\newcommand{\dg}{\nabla_G\text{-}\mathrm{deg}}

\usepackage{setspace}

\begin{document}

\title[Global bifurcation]{Bifurcations of critical orbits of invariant potentials with applications to bifurcations of central configurations of the N-body problem}
\author{Marta Kowalczyk}
\address{Faculty of Mathematics and Computer Science\\
Nicolaus Copernicus University in Toru\'n \\
PL-87-100 Toru\'{n} \\ ul. Chopina 12\slash 8 \\
Poland}
\thanks{Partially supported by the National Science Center,  Poland,  under grant    DEC-2012/05/B/ST1/02165}
\email{martusia@mat.umk.pl}

\date{\today}
\keywords{bifurcations of critical orbits, equivariant potentials, central configurations}

\maketitle

\begin{abstract}
In this article we study topological bifurcations of critical orbits of equivariant gradient equations. We give necessary and sufficient conditions for the existence of global bifurcations of solutions of these equations. Moreover, we apply these abstract results to the study of bifurcations of new families of planar central configurations of the N-body problem.
\end{abstract}

\numsec
\section{Introduction}
\noindent
The most important questions of celestial mechanics are the ones concerning central configurations. They have a long history dating back to the 18th century and have been studied by various mathematicians, including Euler and Lagrange. 
The first known central configurations were the three collinear classes discovered in 1767 by Euler (see \cite{[EULER]}) and the two classes with masses located at the vertices of an equilateral triangle found in 1772 by Lagrange (see \cite{[LAGRANGE]}). Later it was proved by Wintner that the $3$ collinear and $2$ triangular configurations are the only possible central configurations in the $3$-body problem (see \cite{[WINTNER]}).
In general, the question about the finiteness of number of central configurations has been included in the famous list of eighteen problems of the 21st century by Smale (\cite{[SMALE]}) and it is the following:
\begin{quote}
 ``Given positive real numbers $m_1, \ldots, m_n$ as the masses in
  the n-body problem of celestial mechanics, is the number of relative equilibria finite?''
\end{quote}
As a matter of fact, this open question was already formulated by Wintner in his book (\cite{[WINTNER]}) in 1941. In the $4$- and $5$-body problem the answer to Smale's question (\cite{[SMALE]}) has been given by Hampton and Moeckel (\cite{[HAMPTONMOECKEL]}), and by Kaloshin and Albouy (\cite{[KALOSHINALBOUY]}), respectively. 
Although as for now the question remains unanswered for $6$ and more bodies, there were given some estimations for the number of central configurations. For instance, Merkel in \cite{[MERKEL]} and Pacella in \cite{[PACELLA]}, using Morse theory, determined a lower bound for the number of spatial non-collinear central configurations (for a lower bound of the number of planar non-collinear central configurations see \cite{[PALMORE]}).
For a deeper discussion of central configurations we refer the reader to \cite{[MOECKEL]}.

Classifying central configurations is a very difficult problem, therefore an answer is sought even with some simplifications, that is imposing restrictions on the masses or considering only highly symmetrical solutions. 
One of the many people to go in this direction was Palmore, who considered for $n=4, 5$ the configuration of $n-1$ bodies of mass $1$ at the vertices of a regular $(n-1)$-gon and $n$-th body of arbitrary mass $m$ at its centroid (\cite{[PALMORE1973]}). He showed that for $n=4$ and for $n=5$ there exist unique values of the mass parameter $m$ for which this central configuration becomes degenerate. 
Meyer and Schmidt in 1988 reproduced these results and additionally showed that a bifurcation occurs (\cite{[MEYERANDSCHMIDT8*]}).
Since then a lot more central configurations have been discovered in spirit of these mentioned restrictions.
Examples of central configurations which are considered in our paper are the following:
\begin{itemize}
\item a planar configuration of two nested squares found in $2013$ by A. C. Fernandes, L. F. Mello and M. M. da Silva in \cite{[FERNANDES]}, 
\item a planar rosette configuration of $13$ bodies studied in $2006$ by J. Lei and M. Santoprete in \cite{[LEIANDSANTOPRETE]}, and in $2004$ by M. Sekiguchi in \cite{[SEKIGUCHI]}.  
\end{itemize}

There are numerous reasons for studying central configurations. 
There exists an important class of solutions of the $n$-body problem - the so-called homographic solutions (\cite{[WINTNER]}) - which is directly linked to the concept of central configurations. A solution is
called homographic if at every time the configuration of the bodies 
remains similar. As a matter of fact, this can only happen for central configurations. The Euler and Lagrange solutions, which we have already mentioned, are the most famous examples. Thus central configurations generate the unique explicit solutions of the $N$-body problem known until now. 
In particular, planar central configurations also give rise to families of periodic solutions. As a special case, the homographic solutions for which the configuration just rotates rigidly at constant angular speed are known as relative equilibria.
Moreover, central configurations are important for instance in analysing total collision orbits, namely a certain change of coordinates in the neighbourhood of a total collision point turns the colliding particles into a central configuration in the limit as $t$ approaches the collision time. This phenomenon was shown initially for the complete collapse of all bodies at the centre of mass by Wintner in \cite{[WINTNER]}
and was later generalised by Saari in the study of the parabolic escapes of the particles. The same holds for expanding subsystems, which also asymptotically tend to a central configuration (readers interested in these two phenomena should see \cite{[SAARI]} for details and more references). These examples demonstrate that knowledge of central configurations gives important insight to the dynamics near a total collision and asymptotic behaviour of the universe. 
Another aspect motivating the search for these special solutions is the fact that the hypersurfaces of constant energy and angular momentum change their topology exactly at the energy level sets which contain central configurations (\cite{[SMALEII]}).

Central configurations are invariant under homotheties and rotations - the first claim comes from the homogeneity of the potential while the second one stems from the fact that the potential depends only on the mutual distances between the particles and not on its positions.
Additionally, in the planar case the action of the symmetry group, in this case $SO(2),$ is free, so the quotient space $\Omega/SO(2)$ of the action of the group $SO(2)$ on the configuration space $\Omega$ is a manifold.
Because of this a natural thing to consider is a new potential defined on the quotient space $\Omega/SO(2),$ which has been widely used in the literature
(\cite{[LEIANDSANTOPRETE], [MEYER1987], [MEYERANDSCHMIDT8*], [MEYERANDSCHMIDT1988]}). In this context central configurations have been viewed not as critical orbits, but as critical points of a ``quotient'' potential.
On the other hand, in the spatial case the action of the symmetry group $SO(3)$ is not free, therefore the quotient space $\Omega/SO(3)$ of the action of the group $SO(3)$ on the configuration space $\Omega$ is not a manifold (there are two isotropy groups appearing in the configuration space), so the use of ordinary theory is not justifiable (without any differential structure). Thus in this case we cannot use the approach from the planar one. 
For example, the equivariant Morse theory has been presented to the study of spatial central configurations in \cite{[PACELLA]}. In general, a different approach which does not employ the concept of quotient space has also been successfully used (\cite{[GARCIAIZE], [MACIEJEWSKIRYBICKI], [CHAVELARYBICKI]}).
Apart from the symmetries, Newton's equations are of gradient nature, therefore it seems reasonable to apply methods which benefit from this.

In this paper we use the second approach to study classes of planar central configurations, which are treated as $SO(2)$-orbits of critical points of $SO(2)$-invariant potential.      
Among the machinery which is designed to work in the equivariant context making use both of the symmetry and the gradient structure appearing in the problem and which we employ in our paper are equivariant versions of classical topological invariants such as the degree for equivariant gradient maps (see \cite{[GEBA]} and \cite{[RYBICKI]}) and the equivariant Conley index (see \cite{[BARTSCH]}, \cite{[FLOER]} and \cite{[GEBA]}). 
This approach using equivariant theory has for instance been used in \cite{[MACIEJEWSKIRYBICKI]} and \cite{[CHAVELARYBICKI]}.
For general treatment of the degree for equivariant maps we refer the reader to \cite{[BALANOVKRAWCEWICZSTEINLEIN]} and \cite{[BALANOVKRAWCEWICZRYBICKISTEINLEIN]}. 
In this article we apply those topological tools in equivariant bifurcation theory to produce new classes of planar central configurations from known families by obtaining global bifurcations of families of planar central configurations under a change of the degree for equivariant gradient maps. The same result has been reached for local bifurcations under a change of the equivariant Conley Index (\cite{[SMOLLER]}).
In the planar case because we have exactly one orbit type on the configuration space, the results obtained by using the equivariant Conley Index or the degree for equivariant gradient maps are equivalent to those which one could get considering the quotient space and applying ordinary Conley Index or the Brouwer degree. However, in the spatial case the approach via the quotient space is not possible. In general, the obtained theoretical results are stronger than these we need to study of the planar $N$-body problem.

The aim of this paper is twofold. On the one hand, we give new results about equivariant bifurcation theory (see Theorems \ref{twierbif1}, \ref{twchangedegree} and \ref{twierbif2}).
On the other hand, we apply those theoretical results to the study of planar central configurations.
In the next article we shall present an application of our abstract results to the spatial $N$-body problem and give necessary and sufficient conditions for the existence of local and global bifurcations of spatial central configurations. Thus the main goal of this article is to create a homogeneous theory which can be used for both planar and spatial $N$-body problems. In the planar case, as we mentioned before, we can consider the orbit space of the action of the group $SO(2)$ on the configuration space and then use both ordinary Morse theory or the Brouwer degree and the equivariant approach. However, in the spatial case we cannot work on the quotient space because the action of the group $SO(3)$ on the configuration space is not free.

Notice that we can consider the possibility of working on subsets of the full configuration space which are invariant for the gradient flow and seek bifurcations in these subsets, but it is possible that we can get no bifurcations while there are ones in the full configuration space. This phenomenon happened for instance in \cite{[MEYERANDSCHMIDT8*]}, that is bifurcations of less symmetrical families from highly symmetrical families of central configurations occur. Meyer and Schmidt treated the highly symmetrical family of equilateral triangle with fourth body at the centroid and proved that another families which are less symmetrical (that is the families of isosceles triangles with fourth body near the centroid and on the line of symmetry of the triangle) bifurcate from this family. Similarly, these authors taking the family of square with fifth body at the centroid proved bifurcations of families of kites and isosceles trapezoids (for more examples of this phenomenon see \cite{[MEYERANDSCHMIDT1988]}). In our case of the family of rosette 
configuration of $13$ bodies, likewise, we have the existence of bifurcations of central configurations (not necessarily configurations of rosette type) from the rosette family (see Theorem \ref{thmrosette}) and it transpires that central configurations which bifurcate are not of rosette type. However, we do not know the shape of the bifurcating families. One can compute that there are no values of the mass parameter for which the rosette family is degenerate. Therefore the obtained results are stronger than these which could be get by taking into account the invariant subsets of the configuration space. In the case of the family of two nested squares we also get stronger results not considering invariant subsets.

We stress the fact that the algebraic structure of the potential is not important from the point of view of central configurations, the only things that matter are the rotation and scaling invariance. This makes the methods that we use applicable to mathematical models of physical problems involving interactions between multiple bodies, whose behaviour is not necessarily governed by Newton's gravity laws. Such situation arises for instance in molecular dynamics, where intermolecular
relations are modelled using a broad spectrum of potentials. The famous Lennard-Jones potential, which approximately describes the behaviour of two neutral atoms or molecules, was studied in this context in \cite{[CORBERALLIBRE1]} and \cite{[CORBERALLIBRE2]}.

This paper is organised as follows: Section \ref{preliminaries} contains an introduction to the equivariant setting along with the definition and properties of a topological degree suitable in this context and lemmas used in the proofs of our theorems. In Section \ref{bifurcations} we investigate the $G$-orbits of solutions of the equation
\begin{equation}\label{eq0}
    \nabla _v \varphi (v,\rho )=0 
\end{equation}
under the action of $G,$ strictly speaking we formulate abstract theorems giving necessary and sufficient conditions for the existence of bifurcations in the vicinity of a given family of solutions. 
In Theorem \ref{wkkoniecznybif} we state that only a degenerate critical $G$-orbit of solutions of equation $(\ref{eq0})$ can be a $G$-orbit of local bifurcation. 
The sufficient condition is given by Theorem \ref{twierbif1}, its assumptions, which include a change of the degree for $G$-equivariant gradient maps computed at critical $G$-orbits of $\varphi$, imply the existence of a global bifurcation. We also provide simple conditions in Theorem \ref{twchangedegree} and Remark \ref{twbif1remark1}, which allow us to verify inequality of degrees. Moreover, we prove a theorem in the spirit of Rabinowitz's famous alternative, see Theorem \ref{twierbif2}. In Section \ref{applications} we apply our abstract results to the planar $N$-body problem, namely we consider known families of central configurations - the two squares family \eqref{family_two_squares} and the rosette family \eqref{family_rosette} - and prove the existence of bifurcations of new classes of central configurations from these families. We stress that our method is applicable not only in this specific example, but is rather a general framework, which is additionally 
easily implementable on any computer algebra system, which allows symbolic computations.
Notice that in this paper we consider topological bifurcations (local and global ones, see Definitions \ref{biflok} and \ref{bifglob}) rather than bifurcations in the sense of a change in the number of solutions. In other words, in the case of local one, we consider known family of solutions of equation \eqref{eq0} (so-called trivial solutions) and seek non-trivial ones nearby. 
In the case of global one, we seek connected sets of non-trivial solutions nearby the trivial ones which additionally satisfy Rabinowitz type alternative (see condition \eqref{Rabinowitzalternative}).  
Those two definitions, the bifurcation in the sense of a change in the number of solutions and the topological bifurcation, are independent, that is the first one can occur while the second one does not occur and inversely. Throughout this paper we will write about topological bifurcations briefly bifurcations. 

\numsec
\section{Preliminaries}
\label{preliminaries}
\numsec
In this section we review some classical
facts on equivariant topology. The material is fairly standard and well known (for details see for example \cite{[DIECK2]} and \cite{[KAWAKUBO]}).
Throughout this paper we will remain in the real setting, that is all vector spaces will be considered over $\mathbb{R}$ and all matrices (treated as elements of Lie groups) will have real entries. 
Also, from now on $G$ stands for a compact Lie group and $\mathbb{V}=(\mathbb{R}^n,\varsigma)$ is a finite-dimensional, real, orthogonal $G$-representation, that is a pair consisting of the Euclidean $n$-dimensional vector space and a continuous homomorphism 
$\varsigma: G\rightarrow O(n),$ 
where $O(n)$ denotes the group of orthogonal matrices. By $v\in\mathbb{V}$ we mean $v\in\mathbb{R}^n.$ 
In addition to that by $SO(n)$ we will understand the group of special orthogonal matrices.
The linear action of $G$ on $\mathbb{R}^n$ is given by $\xi :G\times\mathbb{R}^n\rightarrow\mathbb{R}^n,$ where 
$\xi(g,v)=\varsigma(g)v.$
We write $gv$ for short. 
Moreover, $\mathbb{R}$ denotes the one-dimensional, trivial representation of the group $G,$ that is $\mathbb{R}=(\mathbb{R},\mathbf{1}),$ where $\mathbf{1}(g)=1$ for any $g\in G.$ 
Then the linear action of $G$ on $\mathbb{R}^n\times\mathbb{R}$ is given by the Cartesian product of $\varsigma$ and $\mathbf{1}$
\[G\times(\mathbb{R}^n\times\mathbb{R})\ni (g,(v,\rho))\mapsto (gv,\rho)\in \mathbb{R}^n\times\mathbb{R}.\] 
A subset $\Omega\subset\mathbb{V}$ is called $G$-invariant if for any $g\in G$ and $v\in\Omega$ we have $gv\in\Omega.$ 
Let $\overline{sub}(G)$ be the set of closed subgroups of $G.$ 
Two subgroups $H, K \in \overline{sub}(G)$ are called \emph{conjugate} if there exists $g \in G$ such that $H=g^{-1}Kg.$ Conjugacy is an equivalence relation, the conjugacy class of $H$ is denoted by $(H)$ and it is called the orbit type of $H.$ Let $\overline{sub}[G]$ be the set of conjugacy classes of closed subgroups of $G.$ Similarly, $H$ is \emph{subconjugate} to $K$ if $H$ is conjugate to a subgroup of $K,$ denote by $(H)\leq (K).$ Subconjugacy defines a partial order in $\overline{sub}[G].$ Additionally, we write $(H)<(K)$ if $(H)\leq (K)$ and $(H)\neq (K).$

If $v\in\mathbb{V}$ then 
$G_v=\{g\in G:gv=v\}\in\overline{sub}(G)$
is the isotropy group of $v.$ 
For each $v\in\mathbb{V}$ the set 
$G(v)=\{gv:g\in G\}\subset\mathbb{V}$
is called the $G$-orbit through $v.$ 
The isotropy groups of points on the same $G$-orbit are conjugate subgroups of $G.$ We will also denote by $\mathbb{V}^G$ the set of fixed points of the 
$G$-action on $\mathbb{V},$ that is 
$\mathbb{V}^G=\{v\in\mathbb{V}: G_v=G\}=\{v\in\mathbb{V}:gv=v\ \forall g\in G\}.$
Moreover, for given $H\in\overline{sub}(G)$ and an open, $G$-invariant subset $\Omega\subset\mathbb{V}$ we have
$\Omega_{<(H)}=\{v\in\Omega: (G_v)<(H)\}$ and $\Omega_{(H)}=\{v\in\Omega: (G_v)=(H)\}.$
Let $H\in\overline{sub}(G)$ and $\mathbb{W}$ be an orthogonal $H$-representation. Now define an action of $H$ on the product $G\times\mathbb{W}$ by the formula 
$(h,(g,w))\mapsto (gh^{-1},hw)$
and let $G\times_H\mathbb{W}$ denote the space of $H$-orbits of this action.

Fix $k,l\in \mathbb{N}\cup\{\infty\}$ and an open, $G$-invariant subset $\Omega\subset\mathbb{V}.$ A map $\varphi:\Omega\times\mathbb{R}\rightarrow\mathbb{R}$ of class $C^{k}$ is said to be a \emph{$G$-invariant $C^k$-map} if 
$\varphi(gv,\rho)=\varphi(v,\rho)$
for any $g\in G$ and $(v,\rho)\in\Omega\times\mathbb{R}.$ 
The set of $G$-invariant $C^{k}$-maps is denoted by $C^k_G(\Omega\times\mathbb{R},\mathbb{R}).$
A map $\psi:\Omega\times\mathbb{R}\rightarrow\mathbb{V}$ of class $C^{l}$ is said to be a \emph{$G$-equivariant $C^l$-map} if 
$\psi(gv,\rho)=g\psi(v,\rho)$
for any $g\in G$ and $(v,\rho)\in\Omega\times\mathbb{R}.$ 
The set of $G$-equivariant $C^{l}$-maps is denoted by $C^l_G(\Omega\times\mathbb{R},\mathbb{V}).$
For any $G$-invariant $C^k$-map $\varphi$ the gradient of $\varphi$ with respect to the
first coordinate denoted by $\nabla_v\varphi$ is $G$-equivariant $C^{k-1}$-map. 

Let $\Omega\subset\mathbb{V}$ be open and $G$-invariant and fix $v_0\in\Omega,$ where by $H$ we denote $G_{v_0}.$
Notice that $\Omega_{(H)}$ is a $G$-invariant submanifold of $\Omega$ and $G(v_0)$ is a $G$-invariant submanifold of $\Omega_{(H)},$ therefore
we obtain the following orthogonal direct sum decomposition 
\begin{equation}\label{decomp1}
T_{v_0}\mathbb{V}=T_{v_0}\Omega=(T_{v_0}\Omega_{(H)})\oplus(T_{v_0}\Omega_{(H)})^{\bot}=
(T_{v_0}G(v_0))\oplus(T_{v_0}\Omega_{(H)}\ominus T_{v_0}G(v_0))\oplus(T_{v_0}\Omega_{(H)})^{\bot},
\end{equation}
where $T_{v_0}\mathbb{V}$ is the tangent space to $\mathbb{V}$ at $v_0.$ 
On the other hand, we have 
\begin{equation}\label{decomp2}
T_{v_0}\mathbb{V}=T_{v_0}\Omega=(T_{v_0}G(v_0))\oplus\mathbb{W}^H\oplus( \mathbb{W}^H)^{\bot},
\end{equation}
where $\mathbb{W}=(T_{v_0}G(v_0))^{\bot}$ is an orthogonal $H$-representation.
Because $(G\times_HB^{\varepsilon}_{v_0}(\mathbb{W}))_{(H)}=G/H\times B^{\varepsilon}_{v_0}(\mathbb{W}^H),$ we get that
$\mathbb{W}^H\subset T_{v_0}\Omega_{(H)},$ where $B^{\varepsilon}_{v_0}(\mathbb{W})$ denotes the $\varepsilon$-ball centred at $v_0$ in $\mathbb{W}.$
Now assume that $\varphi \in C^2_G(\Omega , \mathbb{R})$ and $\ v_0\in (\nabla \varphi)^{-1} (0).$
The following lemma gives the decomposition of the Hessian $\nabla^2\varphi$ of the potential $\varphi$ with respect to the sum decompositions given by the formulas \eqref{decomp1} or \eqref{decomp2} and has been proved in \cite{[GEBA]}.
\begin{Lemma} \label{hesjanpostac} 
Under the above assumptions, the Hessian
\begin{equation*} \left.\begin{array}{lccccc}
    & & T_{v_0}G(v_0) &  & & T_{v_0}G(v_0)  \\
 & &\oplus & & & \oplus \\
\nabla ^2 \varphi (v_0) \  : & T_{v_0}\mathbb{V}= & \mathbb{W}^H & \rightarrow & T_{v_0}\mathbb{V}= & \mathbb{W}^H \\
 & & \oplus & & & \oplus \\
& & ( \mathbb{W}^H)^{\bot} & & & (\mathbb{W}^H)^{\bot}
\end{array}\right. \end{equation*} is of the form
\begin{equation}\label{hesjanpostacwzor} \nabla ^2 \varphi (v_0)=
\left[ \begin{array}{lcc} 0 & 0 & 0 \\ 0 & B(v_0) & 0 \\ 0 & 0 &
C(v_0)
\end{array}\right]   .
\end{equation}
\end{Lemma}
According to Lemma \ref{hesjanpostac}, we have $\dim \ker \nabla^2\varphi(v_0)\geq\dim G(v_0).$ 
We will thus call a critical $G$-orbit $G(v_0)$ \emph{degenerate} if the strict inequality holds and \emph{non-degenerate} if\break 
$\dim \ker \nabla^2\varphi(v_0)=\dim G(v_0)$ (which is equivalent to $B(v_0)$ and $C(v_0)$ being non-singular). Note that non-degenerate critical $G$-orbits are isolated, that is there exists a $G$-invariant neighbourhood $\Theta\subset\mathbb{V}$ of $G$-orbit $G(v_0)$ such that $(\nabla\varphi)^{-1}(0)\cap \Theta=G(v_0).$ Moreover, a non-degenerate critical $G$-orbit $G(v_0)$ is called \emph{special} if $m^-(C(v_0))=0,$ where by the symbol $m^-(C(v_0))$ we denote the Morse index, that is the number of negative eigenvalues (counting multiplicities) of the matrix $C(v_0).$ 

Regarding planar central configurations, there is exactly one orbit type $(H)$ in the configuration space $\Omega$, that is $\Omega=\Omega_{(H)}$ ($(H)=(\{e\})$), so to illustrate this situation we give the following remark and example:
\begin{Remark}\label{hesjanpostaccccase}
Under the assumptions of Lemma  \ref{hesjanpostac} with $\Omega=\Omega_{(H)},$ the Hessian
\begin{equation*} \left.\begin{array}{lccc}
     & T_{v_0}G(v_0) &  &   T_{v_0}G(v_0) \\
\nabla ^2 \varphi (v_0) \  :  & \oplus & \rightarrow & \oplus \\
&  T_{v_0}\Omega_{(H)}\ominus T_{v_0}G(v_0) & &  T_{v_0}\Omega_{(H)}\ominus T_{v_0}G(v_0)
\end{array}\right. \end{equation*} is of the form
\begin{equation}\label{hesjanpostacwzorcccase} \nabla ^2 \varphi (v_0)=
\left[ \begin{array}{lcc} 0 & 0 \\ 0 & B(v_0)
\end{array}\right]   ,
\end{equation}
where $B(v_0)$ is from the formula \eqref{hesjanpostacwzor}.
\end{Remark}
\begin{Example}\label{hesjanpostacSO(2)}
Let $G=SO(2)$ and $\mathbb{V}=(\mathbb{R}^2,\varsigma),$ where $\varsigma: SO(2)\rightarrow O(2)$ is given by 
$\varsigma(g)=g.$ 
Assume that $\Omega=\mathbb{V}\backslash\{0\},$ then for any $v\in\Omega$ we have $SO(2)_v=\{e\},$ so $\Omega=\Omega_{(\{e\})}.$
Fix $\varphi \in C^2_{SO(2)}(\Omega , \mathbb{R})$ given by $\varphi(v)=\psi(|v|^2),$ where 
$\psi \in C^{\infty}(\mathbb{R} ,\mathbb{R})$ is defined by $\psi(t)=t(t-1)$ and by the symbol $|v|$ we denote the standard norm of an element $v.$ 
Then $\nabla \varphi(v_0)=0$ if and only if $|v_0|^2=\frac{1}{2},$ so we get that for $v_0=(0,\frac{1}{\sqrt{2}})$ the Hessian
\begin{equation*} \left.\begin{array}{lccc}
     & T_{v_0}SO(2)(v_0) &  & T_{v_0}SO(2)(v_0) \\
\nabla ^2 \varphi (v_0) \  :   & \oplus &\rightarrow & \oplus \\
& T_{v_0}\Omega_{(\{e\})}\ominus T_{v_0}SO(2)(v_0) &  & T_{v_0}\Omega_{(\{e\})}\ominus T_{v_0}SO(2)(v_0)  \\
\end{array}\right. \end{equation*} is of the form
\begin{equation*}\nabla ^2 \varphi (v_0)=
\left[ \begin{array}{lcc} 
0 & 0 \\ 0 & 4
\end{array}\right]   .
\end{equation*}
\end{Example}
Note that if there is only one orbit type in $\Omega$ then the quotient space $\Omega/G$ of the action of the group $G$ on the open and $G$-invariant set $\Omega$ is a manifold, so for any $G$-invariant $C^2$-map we can consider a "quotient" potential defined on $\Omega/G.$
In the following lemma we present the decomposition of the Hessian of the "quotient" potential and it transpires that the ordinary Morse index of the "quotient" potential is equal to the Morse index of matrix $B(v_0)$ (see the formula \eqref{hesjanpostacwzor}). 
\begin{Lemma}
Let $\Omega\subset\mathbb{V}$ be open and $G$-invariant. Assume that $\Omega=\Omega_{(H)}$ for some $(H)\in\overline{sub}[G]$ and fix $\varphi \in C^2_G(\Omega , \mathbb{R}).$ Then the potential $\psi:\Omega/G\rightarrow\mathbb{R}$ given by $\psi(G(v_0))=\varphi(v_0)$ is a map of class $C^2$ and $\nabla ^2 \psi (G(v_0)) \  : T_{v_0}\Omega/G  \rightarrow  T_{v_0}\Omega/G$ 
is of the form $\nabla ^2 \psi (G(v_0))=\left[ B(v_0) \right],$ where $B(v_0)$ is from the formula \eqref{hesjanpostacwzor}.
\end{Lemma}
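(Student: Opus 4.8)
The plan is to prove both assertions by realising $\Omega/G$ locally through a \emph{linear slice} at $v_0$ and transporting everything back to $\mathbb{V}$. First I would record the geometric input coming from the single orbit type hypothesis. Since $\Omega=\Omega_{(H)}$ is open and $v_0\in\Omega$, every point in a small ball around $v_0$ has isotropy conjugate to $H$; applying this to the slice representation $\mathbb{W}=(T_{v_0}G(v_0))^{\perp}$ forces every $w\in\mathbb{W}$ near $0$ to have isotropy equal to $H$, hence $\mathbb{W}=\mathbb{W}^H$ and $(\mathbb{W}^H)^{\perp}=0$. This is exactly the reduction already recorded in Remark \ref{hesjanpostaccccase}, where the $C(v_0)$-block disappears and \eqref{hesjanpostacwzor} collapses to \eqref{hesjanpostacwzorcccase}. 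In particular the affine slice $S=v_0+\mathbb{W}^H$ is a submanifold of $\Omega$ with $T_{v_0}S=\mathbb{W}^H=T_{v_0}\Omega_{(H)}\ominus T_{v_0}G(v_0)$, and by the slice theorem the restriction $\pi|_S\colon S\to\Omega/G$ of the orbit projection $\pi\colon\Omega\to\Omega/G$ is a diffeomorphism onto a neighbourhood of $G(v_0)$.

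For the regularity claim I would note that $\psi$ is well defined precisely because $\varphi$ is $G$-invariant, so $\psi(G(v))=\varphi(v)$ does not depend on the chosen representative, i.e. $\psi\circ\pi=\varphi$. Since $\Omega$ carries a single orbit type, $\pi$ is a smooth submersion admitting local smooth sections; equivalently, using the slice above, on the neighbourhood $\pi(S)$ one has $\psi=\varphi|_S\circ(\pi|_S)^{-1}$. As $(\pi|_S)^{-1}$ is smooth and $\varphi|_S$ is of class $C^2$, this exhibits $\psi$ as a $C^2$-map.

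It remains to identify the Hessian. I would use the slice $S$ as a coordinate chart for $\Omega/G$ around $G(v_0)$, endowing $\Omega/G$ with the quotient (Riemannian-submersion) metric, for which $d\pi_{v_0}$ restricts to an isometry $\mathbb{W}^H\to T_{G(v_0)}\Omega/G$. Because $G(v_0)$ is a critical orbit, $v_0\in(\nabla\varphi)^{-1}(0)$, so $v_0$ is a critical point of the restriction $\varphi|_S=\psi\circ(\pi|_S)$; at a critical point the Hessian is natural under diffeomorphisms, whence
\[
\nabla^2(\varphi|_S)(v_0)=(d(\pi|_S)_{v_0})^{*}\,\nabla^2\psi(G(v_0))\,(d(\pi|_S)_{v_0}).
\]
Under the isometric identification $T_{G(v_0)}\Omega/G\cong\mathbb{W}^H$ this says that the operator $\nabla^2\psi(G(v_0))$ corresponds to $\nabla^2(\varphi|_S)(v_0)$. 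Finally, since $S=v_0+\mathbb{W}^H$ is affine, $\nabla^2(\varphi|_S)(v_0)$ is simply the restriction of the ambient Hessian $\nabla^2\varphi(v_0)$ to $\mathbb{W}^H\times\mathbb{W}^H$, which by \eqref{hesjanpostacwzor} (equivalently \eqref{hesjanpostacwzorcccase}) is exactly the block $B(v_0)$. Combining the three identifications gives $\nabla^2\psi(G(v_0))=\left[B(v_0)\right]$.

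The step I expect to be the main obstacle is the Hessian transport: one must check that the intrinsic Hessian of $\psi$ at the critical orbit, computed with the quotient metric, really corresponds under $d\pi_{v_0}$ to the Euclidean restriction $B(v_0)$, rather than to some conjugate of it. This hinges on two facts that I would verify carefully: that $G(v_0)$ being critical makes the Hessian a well-defined bilinear form independent of the chart, and that the slice provides \emph{normal} (orthonormal at $v_0$) coordinates, so that the matrix of the Hessian operator coincides with the Euclidean matrix $B(v_0)$ with no metric correction terms. The structure from Lemma \ref{hesjanpostac} guarantees that the orbit directions $T_{v_0}G(v_0)$ lie in the kernel and are exactly the fibre directions killed by $d\pi_{v_0}$, so no spurious contributions survive.
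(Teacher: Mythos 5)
The paper states this lemma without proof --- it appears as an unproved observation following Remark \ref{hesjanpostaccccase} --- so there is no internal argument to compare yours against; what matters is whether your proof stands on its own, and it does. Each of your three steps is sound. (i) The single-orbit-type hypothesis does trivialise the slice representation: for small $w\in\mathbb{W}$ the slice theorem (cf.\ Lemma \ref{wystepujacetypyorbitowe}) gives $G_{v_0+w}=H_w\subset H$, while $\Omega=\Omega_{(H)}$ forces $(G_{v_0+w})=(H)$, and since a compact Lie group cannot be conjugate in $G$ to a proper closed subgroup of itself, $H_w=H$; hence $\mathbb{W}^H=\mathbb{W}$ and the $C(v_0)$-block is absent, as in \eqref{hesjanpostacwzorcccase}. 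It would be worth stating this compactness fact explicitly, since it is exactly where ``single orbit type'' is converted into triviality of the slice action. (ii) With constant orbit type, $\Omega/G$ is a smooth manifold and the affine slice $S=v_0+\mathbb{W}^H$ is a chart, so $\psi=\varphi|_S\circ(\pi|_S)^{-1}$ is indeed $C^2$. (iii) The Hessian transport is legitimate precisely because $v_0$ is a critical point --- which is implicit in the statement, as $B(v_0)$ is only defined by \eqref{hesjanpostacwzor} at zeros of $\nabla\varphi$ --- so the second derivative transforms tensorially, with no correction term involving the second derivative of the chart, and the fact that $d\pi_{v_0}|_{\mathbb{W}^H}$ is an isometry for the quotient metric guarantees that the resulting matrix is exactly $B(v_0)$ (in corresponding orthonormal bases), not merely conjugate to it; restricting the ambient Hessian to the affine slice then yields the $B(v_0)$-block of \eqref{hesjanpostacwzor}. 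In short, your argument is a complete and correct proof of a statement the paper leaves unproved, and it is fully consistent with the paper's own framework (Lemma \ref{hesjanpostac} and Remark \ref{hesjanpostaccccase}).
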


Let $\mathcal{F}_\star(G)$ denote the class of finite, pointed $G$-$CW$-complexes (for the definition of $G$-$CW$-complex see \cite{[DIECK2]}). The $G$-homotopy type of $X\in\mathcal{F}_\star(G)$ is denoted by $[X].$  Let $F$ be a free abelian group generated by $G$-homotopy types of finite, pointed $G$-$CW$-complexes and let $N$ be a subgroup of $F$ generated by elements $[A]-[X]+[X/A],$ where $A,X\in \mathcal{F}_\star(G)$ and $A\subset X.$
Put $U(G)=F/N$ and let $\chi_G(X)$ be the class of an element $[X]\in F$ in $U(G).$
If $X$ is a $G$-$CW$-complex without base point we put $\chi_G(X)=\chi_G(X^+),$ where $X^+=X\cup\{\star\}.$
The ring $U(G)$ is called the Euler ring of $G$ ( for the definition of $U(G)$ see \cite{[DIECK1]} and \cite{[DIECK2]}).
The following theorem can be found in \cite{[DIECK2]}.
\begin{Theorem}
The group $(U(G),+)$ is the free abelian group with basis $\chi_G (G/H^+)$ for $(H)\in\overline{sub}[G].$
Moreover, if $X\in\mathcal{F}_\star(G)$ and $\bigcup _{k=0}^{p}\{(k,(H_{j,k})):\ j=1,\cdots,q(k)\}$ is a type of the cell decomposition of $X,$
then 
\[\chi_G (X)=\sum_{(H)\in\overline{sub}[G]}\left(\sum_{k=0}^p(-1)^{k}\nu(k,(H))\right)\cdot \chi_G (G/H^+),\]
where $\nu(k,(H))$ is the number of cells of dimension $k$ and of orbit type $(H).$
\end{Theorem}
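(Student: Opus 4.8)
The plan is to reduce the statement to two structural properties of $U(G)$ and then run the skeletal filtration of a finite pointed $G$-CW-complex. First I would record the elementary consequences of the defining relation $[A]-[X]+[X/A]\in N$. Taking $A=X=\{\star\}$ gives $[\star]\in N$, so $\chi_G(\star)=0$. Taking $X=A\vee B$ with $A$ a subcomplex and $X/A\cong B$ gives finite additivity over wedges, $\chi_G(A\vee B)=\chi_G(A)+\chi_G(B)$. The decisive computation is the suspension sign: for $Y\in\mathcal{F}_\star(G)$ the reduced cone $CY$ is equivariantly contractible (the cone contraction commutes with the $G$-action, which is trivial on the cone parameter), hence $G$-homotopy equivalent to $\star$ and $\chi_G(CY)=0$; applying the defining relation to $Y\subset CY$ with $CY/Y=\Sigma Y$ yields $\chi_G(\Sigma Y)=-\chi_G(Y)$, and iterating, $\chi_G(S^k\wedge Y)=(-1)^k\chi_G(Y)$.

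Next I would prove the displayed formula together with generation. Let $\{\star\}=X^{(-1)}\subset X^{(0)}\subset\cdots\subset X^{(p)}=X$ be the skeletal filtration. Each subquotient is a finite wedge of equivariant cells, $X^{(k)}/X^{(k-1)}\cong\bigvee_{j=1}^{q(k)}\big(G/H_{j,k}^+\wedge S^k\big)$, since collapsing the $(k-1)$-skeleton turns the $k$-cell $G/H_{j,k}\times D^k$ into $G/H_{j,k}^+\wedge S^k$. The defining relation applied to each inclusion $X^{(k-1)}\subset X^{(k)}$ telescopes (using $\chi_G(X^{(-1)})=0$) to
\[\chi_G(X)=\sum_{k=0}^p\chi_G\big(X^{(k)}/X^{(k-1)}\big).\]
Wedge additivity and the suspension sign now give $\chi_G(X)=\sum_{k=0}^p(-1)^k\sum_{j=1}^{q(k)}\chi_G(G/H_{j,k}^+)$, and regrouping the cells by orbit type is exactly the claimed expression with coefficient $\sum_{k=0}^p(-1)^k\nu(k,(H))$ at $\chi_G(G/H^+)$. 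Since every element of $U(G)$ is of the form $\chi_G(X)$, this shows the family $\{\chi_G(G/H^+):(H)\in\overline{sub}[G]\}$ generates $(U(G),+)$.

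It remains to prove these generators are $\mathbb{Z}$-independent, and this is the one genuinely delicate point. The naive candidates — the fixed-point Euler characteristics $[X]\mapsto\chi(X^K)$ that work for finite $G$ — fail here: already for $G=SO(2)$ every orbit $G/H$ with $H$ finite is a circle, so all of its fixed sets have Euler characteristic $0$ and these marks cannot separate distinct finite subgroups. Instead I would use the finer invariants $n_{(H)}(X)=\chi_c\big((X_{(H)}\setminus\{\star\})/G\big)$, the compactly-supported Euler characteristic of the orbit space of the stratum of points of orbit type exactly $(H)$, with the base point deleted. Because $\chi_c$ is additive across a closed subspace and its open complement, and $(X/A)_{(H)}$ differs from $(X\setminus A)_{(H)}$ only by the base point, each $n_{(H)}$ descends to a homomorphism $U(G)\to\mathbb{Z}$ (and $n_{(H)}(\star)=0$, consistent with $\chi_G(\star)=0$). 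A direct computation gives $n_{(H)}(\chi_G(G/K^+))=\delta_{(H),(K)}$, since the type-$(H)$ stratum of $G/K^+$ has orbit space a single point when $(H)=(K)$ and is empty otherwise. These homomorphisms are dual to the proposed basis, so the generators are independent and form a free $\mathbb{Z}$-basis. As a built-in check, evaluating $n_{(H)}$ on the filtration formula returns $\sum_k(-1)^k\nu(k,(H))$, because each open $k$-cell of type $(H)$ contributes $\chi_c(\mathrm{int}\,D^k)=(-1)^k$.

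The formal part — the cone/suspension relation and the telescoping over skeleta — is routine; the main obstacle is the last paragraph, namely manufacturing invariants sharp enough to resolve the orbit type when the Weyl groups $W(H)=N(H)/H$ are infinite, where ordinary fixed-point Euler characteristics collapse to zero.
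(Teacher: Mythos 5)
The paper itself gives no proof of this statement --- it is quoted from tom Dieck's book \cite{[DIECK2]} --- so the relevant comparison is with the standard argument there, and your outline coincides with it: the cone/suspension computation, the telescoping over the skeletal filtration (which yields both generation and the displayed cell-count formula), and stratum-wise Euler characteristics with compact supports to separate the generators. You also correctly identify the genuine difficulty, namely that fixed-point Euler characteristics cannot work when Weyl groups are infinite (your $SO(2)$ remark is exactly right).

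There is, however, one real gap, and it sits at the point you yourself call decisive. You assert that each $n_{(H)}(X)=\chi_c\bigl((X_{(H)}\setminus\{\star\})/G\bigr)$ ``descends to a homomorphism $U(G)\to\mathbb{Z}$'', and you verify only additivity over the relations $[A]-[X]+[X/A]$. But $F$ is by definition free on \emph{$G$-homotopy types}, so before anything can descend you must show $n_{(H)}$ is a $G$-homotopy invariant, and this is not automatic: $\chi_c$ is not homotopy invariant (e.g. $\chi_c(\mathbb{R})=-1$), the strata $X_{(H)}$ are noncompact, and a $G$-homotopy equivalence need not preserve strata, since equivariant maps can only enlarge isotropy groups and may push type-$(H)$ points into deeper strata. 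For instance, $[-1,1]$ with the $\mathbb{Z}/2$ reflection is $\mathbb{Z}/2$-contractible, yet its free stratum is nonempty while that of a point is empty; here $n_{(e)}$ agrees on both spaces only through a cancellation ($-1$ from the open $1$-cell, $+1$ from the free $0$-cell), not through any preservation of strata. The standard repair, which is how tom Dieck argues, is to set $X^{H,>}=\{x\in X^H: G_x\supsetneq H\}$, a closed $WH$-invariant subcomplex of $X^H$ (where $WH=N(H)/H$), note that $X_{(H)}/G\cong (X^H\setminus X^{H,>})/WH$, and use additivity of $\chi_c$ to write $n_{(H)}(X)=\chi\bigl(X^H/WH\bigr)-\chi\bigl(X^{H,>}/WH\bigr)$ plus a basepoint correction: a difference of ordinary Euler characteristics of \emph{compact} finite CW complexes. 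Each of these two terms is a $G$-homotopy invariant, because a $G$-homotopy equivalence restricts to a $WH$-homotopy equivalence on $H$-fixed sets and --- precisely because equivariant maps never decrease isotropy --- also on the singular sets $X^{H,>}$, and passing to orbit spaces preserves homotopy equivalences. With this lemma inserted, your dual-basis computation $n_{(H)}(\chi_G(G/K^+))=\delta_{(H),(K)}$ stands and the proof is complete; without it, the key homomorphisms have not been shown to exist.
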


Let $\Omega\subset\mathbb{V}$ be open and $G$-invariant. A function $\varphi\in C^1_G(\mathbb{V},\mathbb{R})$ is said to be \emph{$\Omega$-admissible} if $(\nabla\varphi)^{-1}(0)\cap\partial\Omega=\emptyset.$ 
An $\Omega$-admissible map $\varphi\in C^2_G(\mathbb{V},\mathbb{R})$ is called a \emph{special Morse function} if for any $v_0\in(\nabla \varphi)^{-1} (0)\cap\Omega$ a critical $G$-orbit $G(v_0)$ is special. 

Let $\Omega\subset\mathbb{V}$ be open, $G$-invariant and bounded. For an $\Omega$-admissible function $\varphi\in C^1_G(\mathbb{V},\mathbb{R})$ there is a notion of degree for equivariant gradient maps $\dg(\nabla\varphi,\Omega)\in U(G).$ This invariant has been introduced by G\k{e}ba in \cite{[GEBA]} with coordinates as follows
\[\dg(\nabla\varphi,\Omega)=\sum_{(H)\in\overline{sub}[G]}\dg_{(H)}(\nabla\varphi,\Omega)\cdot \chi_G (G/H^+)\in U(G).\]
To make this article more readable we give a sketch of the definition of the degree. First, it is proved that there exists a special Morse function $\psi\in C^2_G(\mathbb{V},\mathbb{R})$ which is $G$-homotopic to the map $\varphi,$ that is there exists $H \in C^1_G(\mathbb{V}\times[0,1],\mathbb{R})$ being $\Omega$-admissible for each $t\in[0,1]$ such that $H(\cdot,0)=\psi$ and $H(\cdot,1)=\varphi.$ Additionally, because non-degenerate critical $G$-orbits are isolated and $\Omega$ is bounded, we get that $(\nabla\psi)^{-1}(0)\cap\Omega$ consists of a finite number of distinct critical $G$-orbits, that is $(\nabla\psi)^{-1}(0)\cap\Omega=G(v_1)\cup \ldots \cup G(v_l),$ where $G(v_i)\cap G(v_j)=\emptyset$ for $i\neq j.$ Let us remind that for any special critical $G$-orbit $G(v_i)$ we have $m^-(C(v_i))=0,$ so next we allocate $(-1)^{m^-(B(v_i))}$ to each critical $G$-orbit $G(v_i)$ and define the degree for equivariant gradient maps $\dg(\nabla\varphi,\Omega)\in U(G)$ by
\[\dg(\nabla\varphi,\Omega)=\dg(\nabla\psi,\Omega)=\!\!\!\!\!\!\!\!\sum_{(H)\in\overline{sub}[G]}\left(\sum_{(G_{v_i})=(H)}(-1)^{m^-(B(v_i))}\right)\cdot \chi_G (G/H^+)\in U(G).\]
The definition of $\dg(\nabla\varphi,\Omega)$ does not depend on the choice of the special Morse function.
For details of the definition and properties of the degree for equivariant gradient maps see \cite{[GEBA]} and \cite{[RYBICKI]}. 

In the following theorem we formulate some basic properties of this degree.
\begin{Theorem}\label{properties_of_degree}
Fix an $\Omega$-\textit{admissible} function $\varphi\in C^1_G(\mathbb{V},\mathbb{R}),$ where $\Omega$ is an open, bounded and $G$-invariant set. Then the degree $\dg(\nabla\varphi,\Omega)\in U(G)$ has the following properties:
\begin{enumerate}
\item[(1)] \text{[Existence]} if $\dg (\nabla\varphi, \Omega)\neq\mathbf{0}\in U(G)$, then $(\nabla\varphi)^{-1}(0)\cap\Omega\neq\emptyset$,
\item[(2)] \text{[Additivity]} if $\Omega_1,\ \Omega_2$ are open, bounded, $G$-invariant and disjoint sets such that $\Omega=\Omega_1\cup\Omega_2,$ then
\[\dg (\nabla\varphi, \Omega)=\dg (\nabla\varphi, \Omega_1)+\dg (\nabla\varphi, \Omega_2),\]
\item[(3)] \text{[Homotopy invariance]} if $\Phi \in C^1_G(\mathbb{V}\times[0,1],\mathbb{R})$ is $\Omega$-admissible for each $t\in[0,1],$ then
\[\dg(\nabla_v\Phi(\cdot,0), \Omega)=\dg(\nabla_v\Phi(\cdot,1), \Omega),\]
\item[(4)] \text{[Excision]} if $\Omega_1\subset\Omega$ is an open, $G$-invariant set such that $(\nabla\varphi)^{-1}(0)\cap\Omega\subset\Omega_1,$ then 
$\dg (\nabla\varphi, \Omega)=\dg (\nabla\varphi, \Omega_1).$
\end{enumerate}
\end{Theorem}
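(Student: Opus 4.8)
The plan is to reduce every assertion to the explicit combinatorial description of the degree on special Morse functions, leaning on the two facts supplied by the construction: that every $\Omega$-admissible $\varphi$ is $\Omega$-admissibly $G$-homotopic to a special Morse function $\psi$ whose gradient may be taken $C^0$-close to $\nabla\varphi$ on $\overline{\Omega}$, and that the resulting signed orbit count
\[
\dg(\nabla\psi,\Omega)=\sum_{(H)\in\overline{sub}[G]}\Bigl(\sum_{(G_{v_i})=(H)}(-1)^{m^-(B(v_i))}\Bigr)\cdot\chi_G(G/H^+)
\]
is independent of the admissible choice of $\psi$. In particular the degree depends only on the $\Omega$-admissible $G$-homotopy class of $\varphi$, which is exactly the content of (3): if $\Phi$ is an $\Omega$-admissible homotopy joining $\varphi_0=\Phi(\cdot,0)$ to $\varphi_1=\Phi(\cdot,1)$, then concatenating $\Phi$ with an admissible homotopy from $\varphi_1$ to a special Morse function $\psi$ exhibits $\psi$ as admissibly homotopic to both endpoints, so well-definedness forces $\dg(\nabla\varphi_0,\Omega)=\dg(\nabla\psi,\Omega)=\dg(\nabla\varphi_1,\Omega)$.

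For (1) I would argue by contraposition. If $(\nabla\varphi)^{-1}(0)\cap\Omega=\emptyset$, then, combined with $\Omega$-admissibility, $\nabla\varphi$ has no zeros on the compact set $\overline{\Omega}$, so $|\nabla\varphi|\geq\delta>0$ there. Choosing the special Morse representative $\psi$ with $\|\nabla\psi-\nabla\varphi\|<\delta$ on $\overline{\Omega}$ makes the straight-line homotopy admissible and leaves $\nabla\psi$ zero-free on $\Omega$; the defining sum is then empty, whence $\dg(\nabla\varphi,\Omega)=\mathbf{0}$. This is precisely the contrapositive of the Existence property.

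Properties (2) and (4) are localizations of the finite critical set of a single well-chosen $\psi$. For additivity, note first that for disjoint open $\Omega_1,\Omega_2$ with $\Omega_1\cup\Omega_2=\Omega$ one has $\partial\Omega_1,\partial\Omega_2\subset\partial\Omega$ (a boundary point of $\Omega_i$ lies in $\overline{\Omega}$ and cannot lie in the open set $\Omega_1$ or $\Omega_2$ without contradiction), so an $\Omega$-admissible special Morse function $\psi$ and its homotopy to $\varphi$ are simultaneously $\Omega_1$- and $\Omega_2$-admissible, hence compute all three degrees. The finitely many critical orbits of $\psi$ in $\Omega$ partition according to membership in $\Omega_1$ or $\Omega_2$, so the signed sum splits orbit-type by orbit-type into the two subsums, giving $\dg(\nabla\varphi,\Omega)=\dg(\nabla\varphi,\Omega_1)+\dg(\nabla\varphi,\Omega_2)$. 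For excision, $\Omega$-admissibility together with $(\nabla\varphi)^{-1}(0)\cap\Omega\subset\Omega_1$ shows that $\nabla\varphi$ is bounded away from zero on the compact set $\overline{\Omega}\setminus\Omega_1$, and in particular $\varphi$ is $\Omega_1$-admissible; picking $\psi$ close enough to $\varphi$ confines all critical orbits of $\psi$ lying in $\Omega$ to $\Omega_1$, so the defining sums for $\dg(\nabla\varphi,\Omega)$ and $\dg(\nabla\varphi,\Omega_1)$ are literally identical.

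The conceptually hard steps are not in this bookkeeping but in the two facts I am borrowing from the construction in \cite{[GEBA]} and \cite{[RYBICKI]}: the equivariant transversality that produces a special Morse function admissibly homotopic to, and $C^0$-close to, an arbitrary $\Omega$-admissible $\varphi$, and the invariance of the signed orbit count under such homotopies. Once these are granted, the only delicate points I expect are the admissibility bookkeeping $\partial\Omega_i\subset\partial\Omega$ in (2) and the uniform gradient estimate on $\overline{\Omega}\setminus\Omega_1$ in (4); both are elementary once the compactness of $\overline{\Omega}$ and the hypotheses are used carefully.
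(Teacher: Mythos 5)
Your proposal cannot be matched against a proof in the paper, because the paper does not prove Theorem \ref{properties_of_degree}: it states these properties as known, deferring to \cite{[GEBA]} and \cite{[RYBICKI]}, where they are established as part of the construction of the degree. Your reconstruction from the definition via special Morse approximation is essentially the standard argument of those references, and your elementary bookkeeping is correct: for a disjoint open decomposition one indeed has $\partial\Omega_i\subset\partial\Omega$, so a single special Morse representative is simultaneously admissible for $\Omega$, $\Omega_1$, $\Omega_2$ and its critical orbits split the signed sum, giving (2); the compactness estimate on $\mathrm{cl}(\Omega)\setminus\Omega_1$ reduces (4) to an identity of defining sums; and (1), (3) follow as you say. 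Two caveats are worth recording. First, the $C^0$-closeness of $\nabla\psi$ to $\nabla\varphi$ on $\mathrm{cl}(\Omega)$, which (1) and (4) genuinely need, is strictly more than what the paper's definitional sketch records (mere existence of an admissibly $G$-homotopic special Morse representative); it does hold in the construction of \cite{[GEBA]} and \cite{[RYBICKI]}, but you are right to flag it as an imported ingredient, since it cannot be extracted from the sketch alone. Second, in (3) the concatenation of two $C^1$ admissible homotopies should be reparametrized (for instance made stationary near the endpoints in $t$) so that it remains $C^1$; this is routine, but without it the concatenation need not lie in the class of homotopies the paper works with, and it is the one step of your argument that deserves an explicit sentence.
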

In the non-equivariant case there are some invariants for continuous maps, for instance the Brouwer degree. This invariant is sometimes too weak to distinguish homotopy classes of two equivariant gradient maps. On the other hand, they can be distinguished by the degree for equivariant gradient maps. In the example below we define two equivariant gradient maps whose the Brouwer degrees are the same while they are distinguished by the degree for $SO(2)$-equivariant gradient maps.
\begin{Example}
Let $G=SO(2)$ and $\mathbb{V}=(\mathbb{R}^2,\varsigma),$ where $\varsigma: SO(2)\rightarrow O(2)$ is given by 
$\varsigma(g)=g.$
Notice that for any $v\in\mathbb{V}$ we have
\begin{equation*}
SO(2)_v=\left\{
\begin{array}{ll}
\{e\} , & v\neq 0\\
SO(2), & v= 0
\end{array}\right.
\end{equation*}
and $B^{1}_0(\mathbb{V})\subset\mathbb{V}$ is an $SO(2)$-invariant set.
Consider two maps $\varphi_\pm\in C^2_{SO(2)}(B^{1}_0(\mathbb{V}),\mathbb{R})$ given by the formulas $\varphi_\pm(v)=\pm\frac{1}{2}|v|^2.$ 
According to Lemma \ref{hesjanpostac}, for $0\in(\nabla\varphi_\pm)^{-1}(0)$ we have 
\begin{equation*}
\nabla ^2 \varphi_+ (0)=
\left[\begin{array}{cc}C_+(0)\end{array}\right]=
\left[\begin{array}{cc}1&0\\ 0&1\end{array}\right]
\text{and}\
\nabla ^2 \varphi_- (0)=
\left[\begin{array}{cc}C_-(0)\end{array}\right]=
\left[\begin{array}{rr}-1&0\\ 0&-1\end{array}\right] .
\end{equation*}
Then the Brouwer degrees of the maps $\nabla\varphi_\pm$ at $0$ in the set $B^{1}_0(\mathbb{V})$ are equal and  
\[\mathrm{deg_B}(\nabla\varphi_+,B^{1}_0(\mathbb{V}),0)=\mathrm{deg_B}(\nabla\varphi_-,B^{1}_0(\mathbb{V}),0)=+1\in\mathbb{Z}.\]
Because $\varphi_+$ is a special Morse function and it has only one critical $SO(2)$-orbit $SO(2)(0),$ we get
\begin{multline*}
\dg (\nabla\varphi_+,B^{1}_0(\mathbb{V}) )=(-1)^{m^-(B_+(0))}\chi_{SO(2)} (SO(2)/SO(2)^+)=\\
=(-1)^0\chi_{SO(2)} (SO(2)/SO(2)^+)=\mathbf{1}\in U(SO(2)). 
\end{multline*}
Next, let $H\in C^2_{SO(2)}(B^{1}_0(\mathbb{V})\times [0,1],\mathbb{R})$ be given by 
$H(v,t)=(1-t\gamma(|v|))\varphi_-(v)+t\gamma(|v|)\varphi_+(v),$ where $\gamma\in C^{\infty}(\mathbb{R},\mathbb{R})$ is such that $\gamma_{|(-\infty,\frac{1}{2}]}\equiv 1,\ \gamma_{|[1,+\infty)}\equiv 0$ and $\gamma_{|(\frac{1}{2},1)}$ is a smooth, decreasing map.
Notice that $H(\cdot,0)=\varphi_-,\ H(\cdot,1)=(1-\gamma(|\cdot|))\varphi_-(\cdot)+\gamma(|\cdot|)\varphi_+(\cdot)$ and $H$ is $B^{1}_0(\mathbb{V})$-admissible for each $t\in[0,1],$ so
the homotopy invariance property of the degree (Theorem \ref{properties_of_degree}.(3)) implies that 
$\dg(\nabla\varphi_- ,B^{1}_0(\mathbb{V}))=\dg(\nabla_vH(\cdot,1) ,B^{1}_0(\mathbb{V})).$
One can compute that the map $H(\cdot,1)$ is a special Morse function and it has exactly two critical $SO(2)$-orbit $SO(2)(0)$ and $SO(2)(v_0),$ where $v_0=(0,y_0)$ and $\frac{1}{2}<y_0<1.$ 
According to Lemma \ref{hesjanpostac}, for $0\in(\nabla_vH(\cdot,1))^{-1}(0)$ and $v_0\in(\nabla_vH(\cdot,1))^{-1}(0)$ we have
\begin{equation*}
\nabla^2_vH(0,1)=\nabla ^2 \varphi_+ (0)\ \text{and}\
\nabla ^2_vH(v_0,1)=
\left[\begin{array}{cc}0 & 0\\ 0 & B(v_0)\end{array}\right] .
\end{equation*}
Then
\begin{multline*}
\dg (\nabla_vH(\cdot,1),B^{1}_0(\mathbb{V}) )=\\ =(-1)^{m^-(B(v_0))}\chi_{SO(2)} (SO(2)/\{e\}^+)+ (-1)^{m^-(B_+(0))}\chi_{SO(2)} (SO(2)/SO(2)^+) =\\=
\pm\chi_{SO(2)} (SO(2)/\{e\}^+)+\chi_{SO(2)} (SO(2)/SO(2)^+)\in U(SO(2)). 
\end{multline*}
\end{Example}
Now we consider equivariant gradient maps whose the Brouwer degree vanishes and the degree for equivariant gradient maps can be nontrivial, see the following example:
\begin{Example}
Let $G=SO(2)$ and $\Omega\subset\mathbb{V}$ be an open, $SO(2)$-invariant and bounded subset of an orthogonal $SO(2)$-representation $\mathbb{V}$ such that $\Omega^{SO(2)}=\emptyset.$ Then for any $\Omega$-admissible map $\varphi\in C^2_G(\Omega,\mathbb{R})$ we have $\mathrm{deg_B}(\nabla\varphi,\Omega,0)=0\in\mathbb{Z}.$
\end{Example}

Below we formulate lemmas which are standard in equivariant topology and will be useful for us in further investigations.
\begin{Lemma}\label{fkoustalonymgihomeo} 
Fix an element $g\in G.$ Then the map $\gamma_g \ : \mathbb{V}\rightarrow \mathbb{V}$ given by the formula $\gamma_g(v)=gv$ is a homeomorphism. In particular, the set $GU=\bigcup_{g\in G}\gamma_g(U)\subset \mathbb{V}$ is open and $G$-invariant for any open subset $U\subset \mathbb{V}.$ 
\end{Lemma}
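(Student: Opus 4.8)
The plan is to treat the two assertions of the lemma separately, both being formal consequences of the fact that the $G$-action on $\mathbb{V}$ is linear and factors through the orthogonal group. First I would verify that $\gamma_g$ is a homeomorphism. Since the action is given by $\gamma_g(v)=\varsigma(g)v$ with $\varsigma(g)\in O(n)$, the map $\gamma_g$ is a linear endomorphism of the finite-dimensional space $\mathbb{R}^n$, hence continuous. To see that it is bijective with continuous inverse, I would use that $\varsigma$ is a group homomorphism, so that $\varsigma(g)\varsigma(g^{-1})=\varsigma(gg^{-1})=\varsigma(e)=\mathrm{Id}$ and likewise $\varsigma(g^{-1})\varsigma(g)=\mathrm{Id}$. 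Therefore $\gamma_{g^{-1}}$ is a two-sided inverse of $\gamma_g$, and being itself a linear map on $\mathbb{R}^n$ it is continuous. Hence $\gamma_g$ is a homeomorphism with $\gamma_g^{-1}=\gamma_{g^{-1}}$.

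For the openness of $GU$, I would observe that for each fixed $g\in G$ the set $\gamma_g(U)$ is the image of the open set $U$ under the homeomorphism $\gamma_g$, and is therefore open. As an arbitrary union of open sets, $GU=\bigcup_{g\in G}\gamma_g(U)$ is then open. For the $G$-invariance, I would take an arbitrary $w\in GU$ and $h\in G$, write $w=gv$ for some $g\in G$ and $v\in U$, and use once more that $\varsigma$ is a homomorphism to compute $hw=\varsigma(h)\varsigma(g)v=\varsigma(hg)v=\gamma_{hg}(v)\in\gamma_{hg}(U)\subset GU$. Since $h$ and $w$ were arbitrary, this shows $GU$ is $G$-invariant.

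I do not expect any genuine obstacle here: the statement is entirely standard and the whole argument is a direct unwinding of the definition of a linear orthogonal representation. The only point that must be invoked with a little care is the homomorphism property of $\varsigma$, since it is precisely this that simultaneously supplies the continuous inverse $\gamma_{g^{-1}}$ of $\gamma_g$ and guarantees that $GU$ is closed under the action. Finite-dimensionality (automatic continuity of linear maps) and the closure of the family of open sets under unions handle the remaining topological bookkeeping.
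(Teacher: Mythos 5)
Your proof is correct. The paper states this lemma without proof, listing it among facts that are ``standard in equivariant topology,'' and your argument --- continuity of $\gamma_g$ from linearity in finite dimensions, the inverse $\gamma_{g^{-1}}$ supplied by the homomorphism property of $\varsigma$, openness of $GU$ as a union of homeomorphic images of $U$, and invariance via $hw=\gamma_{hg}(v)$ --- is exactly the standard verification the paper implicitly relies on.
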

\begin{Lemma}\label{lmozerach}
Let $\varphi \in C^2_G(\mathbb{V} , \mathbb{R})$ and assume that $U\subset \mathbb{V}.$ Then $(\nabla \varphi)^{-1} (0)\cap GU=\emptyset$ if $(\nabla \varphi)^{-1} (0)\cap U=\emptyset.$
\end{Lemma}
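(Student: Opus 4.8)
The plan is to argue by contraposition, exploiting the single structural fact already recorded in the Preliminaries: since $\varphi$ is $G$-invariant, its gradient $\nabla\varphi$ is a $G$-equivariant map, so that $\nabla\varphi(gv)=g\,\nabla\varphi(v)$ for every $g\in G$ and $v\in\mathbb{V}$. The statement then reduces to the observation that the zero set $(\nabla\varphi)^{-1}(0)$ is itself a $G$-invariant subset of $\mathbb{V}$, whence it cannot meet the $G$-saturation $GU=\bigcup_{g\in G}\gamma_g(U)$ of a set $U$ that it already avoids.

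Concretely, I would suppose toward a contradiction that $(\nabla\varphi)^{-1}(0)\cap GU\neq\emptyset$ and choose a point $w$ in this intersection. By the description of $GU$ from Lemma \ref{fkoustalonymgihomeo}, there exist $g\in G$ and $u\in U$ with $w=gu=\gamma_g(u)$. Applying equivariance then yields $0=\nabla\varphi(w)=\nabla\varphi(gu)=g\,\nabla\varphi(u)$.

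The one point deserving a word of care is passing from $g\,\nabla\varphi(u)=0$ back to $\nabla\varphi(u)=0$. This uses that the action of $g$ is effected by the orthogonal matrix $\varsigma(g)\in O(n)$, which is in particular an invertible linear map; hence $g\,x=0$ forces $x=0$. Consequently $\nabla\varphi(u)=0$, so $u\in(\nabla\varphi)^{-1}(0)\cap U$, contradicting the hypothesis $(\nabla\varphi)^{-1}(0)\cap U=\emptyset$. This contradiction establishes the claim.

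There is no genuine obstacle here: the entire content is the equivariance of $\nabla\varphi$ together with the invertibility of the linear $G$-action. I would present the argument exactly as above, or equivalently in a slightly more structural form by first remarking that equivariance renders $(\nabla\varphi)^{-1}(0)$ a $G$-invariant set, and then invoking the general fact that a $G$-invariant set disjoint from $U$ is automatically disjoint from $GU$.
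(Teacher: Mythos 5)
Your proof is correct: the paper states this lemma without proof, as one of the ``standard in equivariant topology'' facts, and your argument---the equivariance of $\nabla\varphi$ recorded in the Preliminaries, together with the invertibility of the orthogonal matrix $\varsigma(g)$, which makes $(\nabla\varphi)^{-1}(0)$ a $G$-invariant set that therefore cannot meet $GU$ if it misses $U$---is exactly the standard argument the paper takes for granted. In particular, you rightly flag and justify the only step needing care, namely passing from $g\,\nabla\varphi(u)=0$ to $\nabla\varphi(u)=0$.
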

The following theorem can be found in \cite{[BROWN]}.
\begin{Theorem}\label{separationlemma}
 Let $K$ be a compact space and $A,\ B\subset K$ be closed, disjoint sets such that there is no connected set $S\subset K$ such that $S\cap A\neq\emptyset$ and $S\cap B\neq\emptyset.$ Then there exist compact sets $K_A,\ K_B \subset K$ such that $A\subset K_A,\ B\subset K_B,\ K_A\cap K_B=\emptyset$ and $K_A\cup K_B=K.$
\end{Theorem}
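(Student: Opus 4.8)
The plan is to convert the hypothesis ``no connected set joins $A$ and $B$'' into an honest clopen partition of $K$, exploiting that in a compact Hausdorff space connected components coincide with quasi-components. First I would record the immediate consequence of the hypothesis at the level of components: the connected component $C(a)$ of any point $a\in A$ is a connected set containing $a$, so it meets $A$, and therefore by assumption it cannot meet $B$. Thus $C(a)\cap B=\emptyset$ for every $a\in A$ (and symmetrically $C(b)\cap A=\emptyset$ for every $b\in B$).

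Next I would pass from components to clopen neighbourhoods. Recall the classical fact that in a compact Hausdorff space (as in our applications, where $K$ is a compact metric space) the connected component of a point $x$ equals its quasi-component, that is the intersection $Q(x)=\bigcap\{U\subset K: U \text{ clopen}, x\in U\}$ of all clopen sets containing $x$. Fix $a\in A$; then $Q(a)=C(a)$ is disjoint from $B$, so $\bigcap_{U}(U\cap B)=\emptyset$, where $U$ ranges over the clopen sets containing $a$. The sets $U\cap B$ are closed subsets of the compact set $B$, and the family of clopen sets containing $a$ is closed under finite intersections; hence by the finite intersection property of the compact set $B$ there is a single clopen set $U_a$ with $a\in U_a$ and $U_a\cap B=\emptyset$. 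This is the technical heart of the argument and the step I expect to be the main obstacle, since it is precisely where both the compactness of $K$ (through the component / quasi-component identity) and the compactness of $B$ are used.

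Finally I would assemble the partition by a second compactness argument. The family $\{U_a: a\in A\}$ consists of open sets covering the compact set $A$, so finitely many suffice: $A\subset U_{a_1}\cup\dots\cup U_{a_n}=:K_A$. As a finite union of clopen sets, $K_A$ is clopen; it contains $A$ and is disjoint from $B$ because each $U_{a_i}$ is. Setting $K_B:=K\setminus K_A$, the set $K_B$ is clopen as well, contains $B$ (since $K_A\cap B=\emptyset$), and by construction $K_A\cap K_B=\emptyset$ and $K_A\cup K_B=K$. Because $K$ is compact, the clopen sets $K_A$ and $K_B$ are closed, hence compact, which yields the asserted decomposition; the degenerate cases $A=\emptyset$ or $B=\emptyset$ are immediate with $K_A=\emptyset$ or $K_B=\emptyset$.
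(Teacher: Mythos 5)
Your proof is correct. Note first that the paper itself does not prove this statement at all: it is quoted from Brown's book (the sentence preceding the theorem reads ``The following theorem can be found in \cite{[BROWN]}''), so there is no internal argument to compare against; you have supplied a complete proof of a result the paper only cites. Your argument is the standard one for this classical separation lemma (often attributed to Whyburn/Kuratowski): reduce the hypothesis to the statement that components of points of $A$ miss $B$, invoke the identity between components and quasi-components, extract a single clopen set $U_a \ni a$ disjoint from $B$ by the finite intersection property applied to the closed sets $U\cap B$ inside the compact set $B$, and then pass to a finite subcover of $A$ to obtain the clopen set $K_A$ whose complement serves as $K_B$. All three compactness uses (quasi-component $=$ component, finite intersection property in $B$, finite subcover of $A$) are correctly identified and correctly deployed. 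The one caveat worth flagging is that the statement in the paper says only ``compact space,'' whereas the equality of components and quasi-components requires the Hausdorff property; you acknowledge this implicitly by restricting to the compact metric setting, and this is harmless, since every application in the paper (the sets $K=\mathrm{cl}(Q)\cap(\nabla_v\varphi)^{-1}(0)$ in Theorems \ref{twierbif1} and \ref{twierbif2}) takes place inside $\mathbb{R}^n\times\mathbb{R}$, where $K$ is compact metric; this also matches the setting of Brown's book, so the looseness is in the paper's phrasing rather than in your proof.
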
 
\begin{Lemma} \label{wystepujacetypyorbitowe}
Fix a map $\varphi\in C^{2}_{G}(\mathbb{V},\mathbb{R})$ and $v_0\in\mathbb{V}$ such that $\nabla\varphi(v_0)=0.$ 
Then for any $v\in G B^{\varepsilon}_{v_0}(\mathbb{W})$ we have $(G_v)\leq (G_{v_0}),$ 
where $\mathbb{W}=(T_{v_0}G(v_0))^{\bot}.$
\end{Lemma}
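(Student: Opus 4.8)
The plan is to recognise $B^{\varepsilon}_{v_0}(\mathbb{W})$ as a linear slice at $v_0$ and to read off the isotropy groups of nearby points from the differentiable slice (tube) theorem. Write $H=G_{v_0}$. As observed in the discussion preceding \eqref{decomp2}, $\mathbb{W}=(T_{v_0}G(v_0))^{\bot}$ is an orthogonal $H$-representation; since $H$ fixes $v_0$ and preserves $\mathbb{W}$, the set $B^{\varepsilon}_{v_0}(\mathbb{W})=\{v_0+w:\ w\in\mathbb{W},\ |w|<\varepsilon\}$ is $H$-invariant. First I would reduce the claim to a statement along a single orbit: every $v\in GB^{\varepsilon}_{v_0}(\mathbb{W})$ has the form $v=g(v_0+w)$ with $g\in G$ and $w\in\mathbb{W}$, $|w|<\varepsilon$, whence $G_v=gG_{v_0+w}g^{-1}$ and therefore $(G_v)=(G_{v_0+w})$. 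Thus it suffices to prove $(G_{v_0+w})\le(H)$, and in fact I expect the sharper identity $G_{v_0+w}=H_w:=\{h\in H:\ hw=w\}\le H$.

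The main tool is the slice theorem for the smooth action of the compact Lie group $G$ (see \cite{[DIECK2]}, \cite{[KAWAKUBO]}), applied on the vector space $\mathbb{V}$, where the exponential at $v_0$ is the affine map $w\mapsto v_0+w$. For $\varepsilon$ small enough it furnishes a $G$-equivariant diffeomorphism $\Phi:G\times_H B^{\varepsilon}_{v_0}(\mathbb{W})\to\mathbb{V}$, $\Phi([g,w])=g(v_0+w)$, onto the $G$-invariant tube $GB^{\varepsilon}_{v_0}(\mathbb{W})$ about $G(v_0)$. Being an injective $G$-equivariant map, $\Phi$ preserves isotropy, so $G_{\Phi([g,w])}=G_{[g,w]}$. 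A direct computation of the residual $G$-action on $G\times_H\mathbb{W}$, namely $g'\cdot[g,w]=[g'g,w]$, gives $G_{[g,w]}=gH_wg^{-1}$: indeed $[g'g,w]=[g,w]$ forces some $h\in H$ with $hw=w$ and $g'=ghg^{-1}$. Combining these, $(G_v)=(G_{v_0+w})=(H_w)\le(H)=(G_{v_0})$, which is the assertion.

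I expect the delicate point to be the choice of $\varepsilon$. The tube map $\Phi$ is only guaranteed injective—hence isotropy-preserving—for $\varepsilon$ sufficiently small, and the inclusion $G_{v_0+w}\le H$ can genuinely fail for large $\varepsilon$: for $SO(2)$ acting standardly on $\mathbb{R}^2$ with $v_0$ on the unit circle (so $H=\{e\}$), a large ball in the normal radial line reaches the origin, whose isotropy is all of $SO(2)$. So the real content is the slice theorem's guarantee that, for small $\varepsilon$, distinct classes in $G\times_H B^{\varepsilon}_{v_0}(\mathbb{W})$ have distinct images and that the isotropy formula $G_{[g,w]}=gH_wg^{-1}$ holds; the rest is bookkeeping. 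I would also note that the hypothesis $\nabla\varphi(v_0)=0$ plays no role in the conclusion itself, which depends only on the $G$-action near $G(v_0)$; it merely places $v_0$ as a critical point so that $\mathbb{W}$ is the normal representation appearing in \eqref{decomp2}. As a self-contained alternative to quoting the tube theorem, one may argue by compactness: were the conclusion false one would find $k_n\in G_{v_0+w_n}\setminus H$ with $w_n\to 0$, and a convergent subsequence $k_n\to k$ would yield $kv_0=v_0$, i.e. $k\in H$; controlling this with the linear $H$-action on $\mathbb{W}$ for small $\varepsilon$ forces a contradiction, though the clean route remains the slice theorem.
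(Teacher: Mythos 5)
Your proof is correct and follows exactly the route the paper intends: the paper states this lemma without proof as a standard fact of equivariant topology, and its own earlier identification of the tube $G\times_H B^{\varepsilon}_{v_0}(\mathbb{W})$ with a neighbourhood of $G(v_0)$ (just before Lemma \ref{hesjanpostac}) is precisely the slice-theorem mechanism you invoke, giving $(G_v)=(gH_wg^{-1})\leq (H)$. Your observation that the conclusion genuinely requires $\varepsilon$ small (and that $\nabla\varphi(v_0)=0$ is irrelevant to the group-theoretic conclusion) is also accurate, since the tube map is only injective, hence isotropy-preserving, for sufficiently small $\varepsilon$.
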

The following lemma can be found in \cite{[MAYER]}.
\begin{Lemma}\label{istnieniespecfunkcjiMorse'a}
Fix a map $\varphi\!\in\! C^{2}_{G}(\mathbb{V},\mathbb{R})$ and assume that $G(v_0)$ is a non-degenerate critical $G$-orbit of $\varphi$ such that
$m^-(C(v_0))\neq 0.$ Then for all open, $G$-invariant neighbourhoods $\Theta \subset\mathbb{V}$ of the
orbit $G(v_0)$ such that $(\nabla\varphi)^{-1}(0)\cap\Theta=G(v_0)$ there exist an open, $G$-invariant neighbourhood $U\subset cl(U)\subset \Theta$ of the orbit $G(v_0),\ \varepsilon >0$ and a map
$\phi\!\in\! C^{2}_{G}(\mathbb{V},\mathbb{R})$ such that
\begin{enumerate}
\item $\varphi (v)=\phi(v)$ for all $v\in \mathbb{V}\backslash U(\varepsilon),$ where $U(\varepsilon)$ denotes an $\varepsilon$-neighbourhood of the set $U,$ that is $\bigcup_{v\in U}B^{\varepsilon}_{v}(\mathbb{V}),$
\item $G(v_0)$ is a special critical $G$-orbit of $\phi,$
\item $((\nabla  \phi )^{-1}(0)\cap (U(\varepsilon)\backslash G(v_0)))_{<(H)}=(\nabla  \phi )^{-1}(0)\cap (U(\varepsilon)\backslash G(v_0)),$ where $(H)=(G_{v_0}).$
\end{enumerate}
\end{Lemma}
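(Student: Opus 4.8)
The plan is to localise the problem near the orbit $G(v_0)$, reduce it to an $H$-equivariant question on the normal representation $\bW=(T_{v_0}G(v_0))^{\bot}$, and modify $\varphi$ by adding a convex bump supported precisely in the directions on which $C(v_0)$ is negative definite. Concretely, I would first invoke the slice (tubular neighbourhood) theorem to fix a $G$-equivariant diffeomorphism of $G\times_H B^{\delta}_{v_0}(\bW)$ onto a $G$-invariant tube $\cN\subset\Theta$ about $G(v_0)$, for $\delta$ small. Restricting $\varphi$ to the slice yields an $H$-invariant $C^2$ function $f$ on $B^{\delta}_{v_0}(\bW)$ with a critical point at $w=0$ (corresponding to $v_0$). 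Since $\nabla\varphi$ is everywhere orthogonal to the orbit and the slice directions span the normal space, the zeros of $\nabla f$ are exactly the slice coordinates of the zeros of $\nabla\varphi$ in $\cN$; as $(\nabla\varphi)^{-1}(0)\cap\Theta=G(v_0)$, the point $w=0$ is the only zero of $\nabla f$ in $B^{\delta}_{v_0}(\bW)$. By Lemma \ref{hesjanpostac} and \eqref{hesjanpostacwzor}, in the splitting $\bW=\bW^H\oplus(\bW^H)^{\bot}$ the Hessian of $f$ at $0$ is $\mathrm{diag}(B(v_0),C(v_0))$.

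Next I would build the modification on the slice. Because $\varphi$ is $G$-invariant and $v_0$ is fixed by $H$, the block $C(v_0)\colon(\bW^H)^{\bot}\to(\bW^H)^{\bot}$ is a symmetric $H$-equivariant operator, so its negative eigenspace $E^-$ (of dimension $m^-(C(v_0))\neq 0$) and positive eigenspace $E^+$ are $H$-subrepresentations and the orthogonal projection $P^-$ onto $E^-$ is $H$-equivariant. Fixing a smooth cut-off $\rho$ with $\rho\equiv 1$ near $0$ and $\rho\equiv 0$ outside $B^{\delta'}_{v_0}(\bW)$ (with $\delta'<\delta$) and a constant $K$ exceeding half the modulus of the most negative eigenvalue of $C(v_0)$, I set $\tilde f(w)=f(w)+K\,\rho(|w|)\,|P^-w|^2$. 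This is $H$-invariant, so it transports to a $G$-invariant function on $\cN$ which, equal to $\varphi$ near $\partial\cN$, extends by $\varphi$ to a map $\phi\in C^2_G(\bV,\bR)$; choosing $U$ a slightly smaller tube and $\varepsilon$ small confines the support of $\phi-\varphi$ to $U(\varepsilon)\subset\Theta$, giving property (1). Near $0$ one has $\tilde f=f+K|P^-w|^2$, so the $C$-block of the Hessian at $v_0$ becomes $C(v_0)+2KP^-$, positive definite for $K$ large, while the $B$-block is unchanged; hence $v_0$ is a non-degenerate critical point with vanishing Morse index in the normal directions, i.e. $G(v_0)$ is a special critical orbit of $\phi$ (property (2)).

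Property (3) then follows from a fixed-point computation. Differentiating the added term gives $\nabla\big(K\rho(|w|)|P^-w|^2\big)=K\rho'(|w|)\tfrac{w}{|w|}|P^-w|^2+2K\rho(|w|)P^-w$, which vanishes identically on $\bW^H$ because $P^-w=0$ there; thus $\nabla\tilde f=\nabla f$ on $\bW^H$, and the only zero of $\nabla\tilde f$ on $\bW^H\cap B^{\delta}_{v_0}(\bW)$ is $w=0$. In tube coordinates the points of orbit type $(H)$ are exactly those whose slice component lies in $\bW^H$ (a proper closed subgroup of $H$ cannot be $G$-conjugate to $H$), so the only critical point of $\phi$ of type $(H)$ in $U(\varepsilon)$ is $G(v_0)$ itself; by Lemma \ref{wystepujacetypyorbitowe} every other zero of $\nabla\phi$ in $U(\varepsilon)$ has orbit type $\leq(H)$, and therefore in fact $<(H)$, which is (3).

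I expect the main obstacle to be the bookkeeping around the global equivariant extension: verifying that the $H$-invariant bump genuinely yields a $C^2$ $G$-invariant $\phi$ whose difference from $\varphi$ remains inside $U(\varepsilon)\subset\Theta$, that the transition region where $\rho'\neq0$ introduces no critical points of type $(H)$ and no zeros escaping $U(\varepsilon)$, and that specialness and non-degeneracy at $v_0$ survive the perturbation. The conceptual heart, however, is the observation that the added gradient vanishes along the fixed-point set $\bW^H$: this forces every newly created critical point off the $(H)$-stratum and so delivers (3) almost for free, reducing the remaining work to the routine (if delicate) verification of (1) and (2).
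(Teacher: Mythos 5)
The paper offers no proof of this lemma to compare against: it is quoted verbatim with the attribution ``The following lemma can be found in \cite{[MAYER]}'', so the only internal content is that citation. Judged on its own merits, your argument is correct in outline and follows exactly the strategy that underlies Mayer's and G\k{e}ba's constructions: pass to the slice $\mathbb{W}=(T_{v_0}G(v_0))^{\bot}$ via the tubular neighbourhood theorem, note that criticality of the $G$-invariant function on the tube is equivalent to criticality of its $H$-invariant restriction $f$ on the slice (the orbit directions are annihilated by invariance, and orbit plus slice directions span the whole tangent space since $(g,w)\mapsto g(v_0+w)$ is a submersion onto the tube), add the $H$-invariant bump $K\rho(|w|)|P^-w|^2$ to make the $C$-block positive definite while leaving $B(v_0)$ untouched, and exploit the fact that the gradient of the bump vanishes identically on $\mathbb{W}^H$. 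Your identification of the $(H)$-stratum of the tube with $G\cdot(v_0+\mathbb{W}^H)$ is also right, and the step you pass over quickly there deserves one line: $H_w\subset H$ with $(H_w)=(H)$ forces $H_w=H$ because a closed subgroup of a compact Lie group that is conjugate to the whole group has the same dimension and the same number of components, hence coincides with it. Beyond that, only bookkeeping needs tightening in a written version: (i) build the tube inside $\Theta$ and choose $\varepsilon$ smaller than the distance from $\mathrm{cl}(U)$ to the complement of the tube, so that $U(\varepsilon)$ lies in the tube --- otherwise neither property (1) nor the orbit-type argument for (3) literally concerns $U(\varepsilon)$; (ii) observe that all zeros of $\nabla\phi$ in $U(\varepsilon)\setminus G(v_0)$ necessarily lie in the support of the transported bump (outside it $\nabla\tilde f=\nabla f\neq 0$), so nothing ``escapes''; (iii) the transported function is $C^2$ because the tube is locally a product via local sections of $G\to G/H$. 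None of these is a gap in the idea; they are precisely the routine verifications you flagged, and your key observation --- that the perturbing gradient vanishes along the fixed-point set, so no critical points of type $(H)$ can be created --- is what makes conclusion (3) work.
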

\begin{Lemma} \label{postacstopnia}
Fix a map $\varphi\!\in\! C^{2}_{G}(\mathbb{V},\mathbb{R})$ and assume that $G(v_0)$ is a non-degenerate critical $G$-orbit of $\varphi$ such that $m^-(C(v_0))\neq 0$ and $(H)=(G_{v_0}).$ Then there exists an open, $G$-invariant neighbourhood $\Theta$ of the orbit $G(v_0)$ such that $(\nabla\varphi)^{-1}(0)\cap\Theta= G(v_0).$ Moreover, 
\begin{align*}
\dg(\nabla\varphi ,\Theta)& =(-1)^{m^-(B(v_0))} \chi_G(G/H^+)\ +\\ & + \sum_{(H')\in\overline{sub}[G],(H')<(H)}\dg_{(H')}(\nabla\varphi ,\Theta)\cdot \chi_G(G/H'^+).
\end{align*}
\end{Lemma}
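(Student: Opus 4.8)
The plan is to compute the degree by deforming $\varphi$ into a map whose only critical orbit of type $(H)$ is a \emph{special} one, and then to read off the answer from the definition of $\dg$, using orbit-type localisation to decide which basis elements $\chi_G(G/H'^+)$ can occur. Since $G(v_0)$ is non-degenerate it is isolated, so there is an open, bounded, $G$-invariant $\Theta$ with $(\nabla\varphi)^{-1}(0)\cap\Theta=G(v_0)$; in particular $\varphi$ is $\Theta$-admissible. First I would feed this $\Theta$ into Lemma \ref{istnieniespecfunkcjiMorse'a}, obtaining $U\subset\mathrm{cl}(U)\subset\Theta$, a radius $\varepsilon>0$ (shrunk so that $U(\varepsilon)\subset\Theta$) and a map $\phi\in C^2_G(\mathbb{V},\mathbb{R})$ that agrees with $\varphi$ off $U(\varepsilon)$, for which $G(v_0)$ is a special critical orbit and every other zero of $\nabla\phi$ in $U(\varepsilon)$ has isotropy type strictly below $(H)$. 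Because $\varphi=\phi$ on $\mathbb{V}\setminus U(\varepsilon)\supset\partial\Theta$, the affine homotopy $\Phi(v,t)=(1-t)\varphi(v)+t\phi(v)$ is $G$-invariant in $v$ and $\Theta$-admissible for each $t$ (its gradient equals $\nabla\varphi\neq 0$ on $\partial\Theta$), so Theorem \ref{properties_of_degree}(3) yields $\dg(\nabla\varphi,\Theta)=\dg(\nabla\phi,\Theta)$.

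Next I would localise the zeros of $\nabla\phi$. As $\phi=\varphi$ outside $U(\varepsilon)$ and $(\nabla\varphi)^{-1}(0)\cap\Theta=G(v_0)\subset U\subset U(\varepsilon)$, every zero of $\nabla\phi$ in $\Theta$ lies in $U(\varepsilon)$. Since $G(v_0)$ is special, hence isolated in $(\nabla\phi)^{-1}(0)$, I would pick an open, bounded, $G$-invariant $V_0$ with $\mathrm{cl}(V_0)\subset U(\varepsilon)$ and $(\nabla\phi)^{-1}(0)\cap\mathrm{cl}(V_0)=G(v_0)$, and set $V_1=U(\varepsilon)\setminus\mathrm{cl}(V_0)$, which is open, bounded and $G$-invariant and contains all the remaining zeros, each of type $<(H)$. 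The zero set of $\nabla\phi$ in $U(\varepsilon)$ is then contained in the disjoint union $V_0\cup V_1$, so excision and additivity (Theorem \ref{properties_of_degree}(4),(2)) give
\[
\dg(\nabla\varphi,\Theta)=\dg(\nabla\phi,\Theta)=\dg(\nabla\phi,U(\varepsilon))=\dg(\nabla\phi,V_0)+\dg(\nabla\phi,V_1).
\]

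On $V_0$ the map $\phi$ is a special Morse function carrying the single special orbit $G(v_0)$ of type $(H)$; the modification of Lemma \ref{istnieniespecfunkcjiMorse'a} is supported in the directions normal to the stratum $\Omega_{(H)}$ and only renders the block $C(v_0)$ non-singular with $m^-(C(v_0))=0$, leaving $B(v_0)$ untouched. Hence the defining formula for the degree gives $\dg(\nabla\phi,V_0)=(-1)^{m^-(B(v_0))}\chi_G(G/H^+)$. It remains to show that $\dg(\nabla\phi,V_1)$ involves only the basis elements $\chi_G(G/H'^+)$ with $(H')<(H)$. Passing to a special Morse function $G$-homotopic to $\phi$ and supported near the zero set of $\nabla\phi$ in $V_1$, I would argue that each new critical orbit sits in a tube about some zero $w$ of $\nabla\phi$ with $(G_w)<(H)$, so by Lemma \ref{wystepujacetypyorbitowe} it has type $\leq(G_w)<(H)$. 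Consequently no critical orbit of type $(H')\not<(H)$ occurs, and the summation formula defining $\dg$ gives $\dg_{(H')}(\nabla\phi,V_1)=0$ for every such $(H')$, the relevant sum being empty.

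Finally, since $U(G)$ is free abelian on the classes $\chi_G(G/H'^+)$, the coordinates in the displayed identity may be compared termwise; together with $\dg(\nabla\varphi,\Theta)=\dg(\nabla\phi,\Theta)$ this yields $\dg_{(H)}(\nabla\varphi,\Theta)=(-1)^{m^-(B(v_0))}$ and $\dg_{(H')}(\nabla\varphi,\Theta)=0$ for $(H')\not\leq(H)$, which is exactly the asserted formula. I expect the principal obstacle to be the third paragraph: justifying that a special Morse approximation on $V_1$ can be taken to create only critical orbits of type $<(H)$, i.e.\ the orbit-type localisation of the approximation via Lemma \ref{wystepujacetypyorbitowe}; everything else is a bookkeeping application of the homotopy, excision and additivity properties of the degree.
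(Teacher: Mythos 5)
Your proposal is correct and follows essentially the same route as the paper's proof: both homotope $\varphi$ to the modified map $\phi$ supplied by Lemma \ref{istnieniespecfunkcjiMorse'a}, then split the degree by excision and additivity into a contribution $(-1)^{m^-(B(v_0))}\chi_G(G/H^+)$ from a small tube around $G(v_0)$ and a remainder whose zeros all have isotropy type strictly below $(H)$, with Lemma \ref{wystepujacetypyorbitowe} guaranteeing that the special Morse approximation on that remainder produces only orbit types $<(H)$. The only differences are cosmetic (your $V_0$, $V_1$ versus the paper's $U(\varepsilon/4)$ and $U(\varepsilon/2)\setminus\mathrm{cl}(U(\varepsilon/4))$), and you make explicit a point the paper leaves implicit, namely that the modification preserves the block $B(v_0)$ and hence its Morse index.
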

\begin{proof}
We can choose an open, $G$-invariant neighbourhood $\Theta$ of the $G$-orbit $G(v_0)$ such that $(\nabla\varphi)^{-1}(0)\cap\Theta= G(v_0).$ Without loss of generality we can assume $(\nabla\varphi)^{-1}(0)\cap\partial\Theta= \emptyset.$
Now, using Lemma \ref{istnieniespecfunkcjiMorse'a}, we define a $\Theta$-admissible homotopy 
$H(v,t)=t\varphi(v)+(1-t)\phi(v).$ 
Next, the homotopy invariance property of the degree (Theorem \ref{properties_of_degree}.(3)) implies that 
\[\dg(\nabla\varphi ,\Theta)=\dg(\nabla_vH(\cdot ,1) ,\Theta)=\dg(\nabla_vH(\cdot ,0) ,\Theta)=\dg(\nabla\phi ,\Theta).\]
In addition, $(\nabla\phi)^{-1}(0)\cap\Theta\subset U\left(\frac{\varepsilon}{2}\right)$ and $(\nabla\phi)^{-1}(0)\cap U\left(\frac{\varepsilon}{4}\right)=\{G(v_0)\},$ just like in Lemma
\ref{istnieniespecfunkcjiMorse'a}. Therefore using excision and additivity properties of the degree (Theorems \ref{properties_of_degree}.(4) and \ref{properties_of_degree}.(2)) and Lemma \ref{wystepujacetypyorbitowe} we can conclude that  
\begin{align*} 
 \nabla_G  \text{-deg}(\nabla \phi ,\Theta) &=\dg(\nabla\phi ,U\left(\varepsilon/ 2\right))= \dg(\nabla\phi ,U\left(\varepsilon/ 4\right)) +\\
 & +\dg(\nabla\phi ,U\left(\varepsilon/ 2\right)\backslash \mathrm{cl}( U\left(\varepsilon/ 4\right)))=(-1)^{m^-(B(v_0))}\chi_G(G/H^+)+\\ 
 &+\sum_{(H')\in \overline{sub}[G],(H')<(H)}\!\!\!\!\!\!\!\!\!\!\dg_{(H')}(\nabla\phi ,U\left(\varepsilon/ 2\right)\backslash \mathrm{cl}( U\left(\varepsilon/ 4\right)))\cdot\chi_G(G/H'^+),
\end{align*}
where by $ \mathrm{cl}( U\left(\varepsilon/ 4\right))$ we denote the closure of $ U\left(\varepsilon/ 4\right).$ 
\end{proof}
 \numsec
\section{Statement of main results}
\label{bifurcations} 
The following assumptions will be needed throughout this section. Let $G$ be a compact Lie group and suppose that $\Omega\subset\mathbb{V}$ is an open, $G$-invariant subset of a
finite-dimensional, real, orthogonal $G$-representation $\mathbb{V}.$ 
Consider a $G$-invariant $C^2$-potential $\varphi:\Omega\times\mathbb{R}\rightarrow\mathbb{R}.$ Since $\nabla_v\varphi$ is $G$-equivariant,  if $(v_0,\rho_0)\in (\nabla_v\varphi)^{-1}(0)$ then $G(v_0)\times\{\rho_0\}\subset(\nabla_v\varphi)^{-1}(0).$ 
A set of the form $G(v_0)\times\{\rho_0\}$ consisting of critical points is called a \emph{critical $G$-orbit of $\varphi.$} 
Using the techniques of equivariant bifurcation theory we will study solutions of the equation
\begin{equation}\label{eq}
    \nabla _v \varphi (v,\rho )=0.
\end{equation}
Additionally, assume there exists a continuous map 
$w:\mathbb{R}\rightarrow\Omega$ such that \[\mathcal{F}=\bigcup\limits_{\rho\in\mathbb{R}}G(w(\rho ))\times\{\rho\}\subset (\nabla_v \varphi )^{-1}(0).\] 
The family $\mathcal{F}$ is called a \emph{trivial family of $G$-orbits} of solutions of \eqref{eq} and for any subset $X\subset\mathbb{R}$ let $\mathcal{F}_{X}$ denote the set of $G$-orbits 
$\bigcup_{\rho\in X }G(w(\rho))\times\{\rho\}\subset\mathcal{F}\subset\Omega\times\mathbb{R}.$

In this section we will formulate the necessary and sufficient conditions for the existence of
bifurcations of nontrivial solutions of (\ref{eq}) from the trivial family $\mathcal{F}.$

Let us introduce a notion of a local bifurcation of $G$-orbits of solutions of (\ref{eq}).
\begin{Definition} \label{biflok}
    Fix parameters $\rho^{\pm}\in\mathbb{R}$ such that $\rho^- < \rho^+.$ A local bifurcation from the segment of $G$-orbits $\mathcal{F}_{[\rho^-,\rho^+]} \subset \mathcal{F}$ of solutions
    of (\ref{eq}) occurs if there exists a $G$-orbit $\mathcal{F}_{\rho_0}\subset\mathcal{F}_{[\rho^-,\rho^+]}$ such that the point $(w(\rho _0) , \rho _0)\in\mathcal{F}_{\rho_0}$ is
    an accumulation point of the set $\{(v,\rho)\in (\Omega\times\mathbb{R}) \backslash\mathcal{F} :\ \nabla _v \varphi (v,\rho )=0\}$ (see Figure \ref{deflocbif}).\\
    We call $\rho _0$ a parameter of local bifurcation and $\mathcal{F}_{\rho_0}$ a $G$-orbit of local bifurcation. We denote by $\mathcal{BIF}$ the set of all parameters of local bifurcation.
\end{Definition}
\begin{figure}[h!]
 \centering
 \setlength{\unitlength}{0.1\textwidth}
  \begin{picture}(5,5)
    \put(0,0){\includegraphics[height=0.4\textwidth]{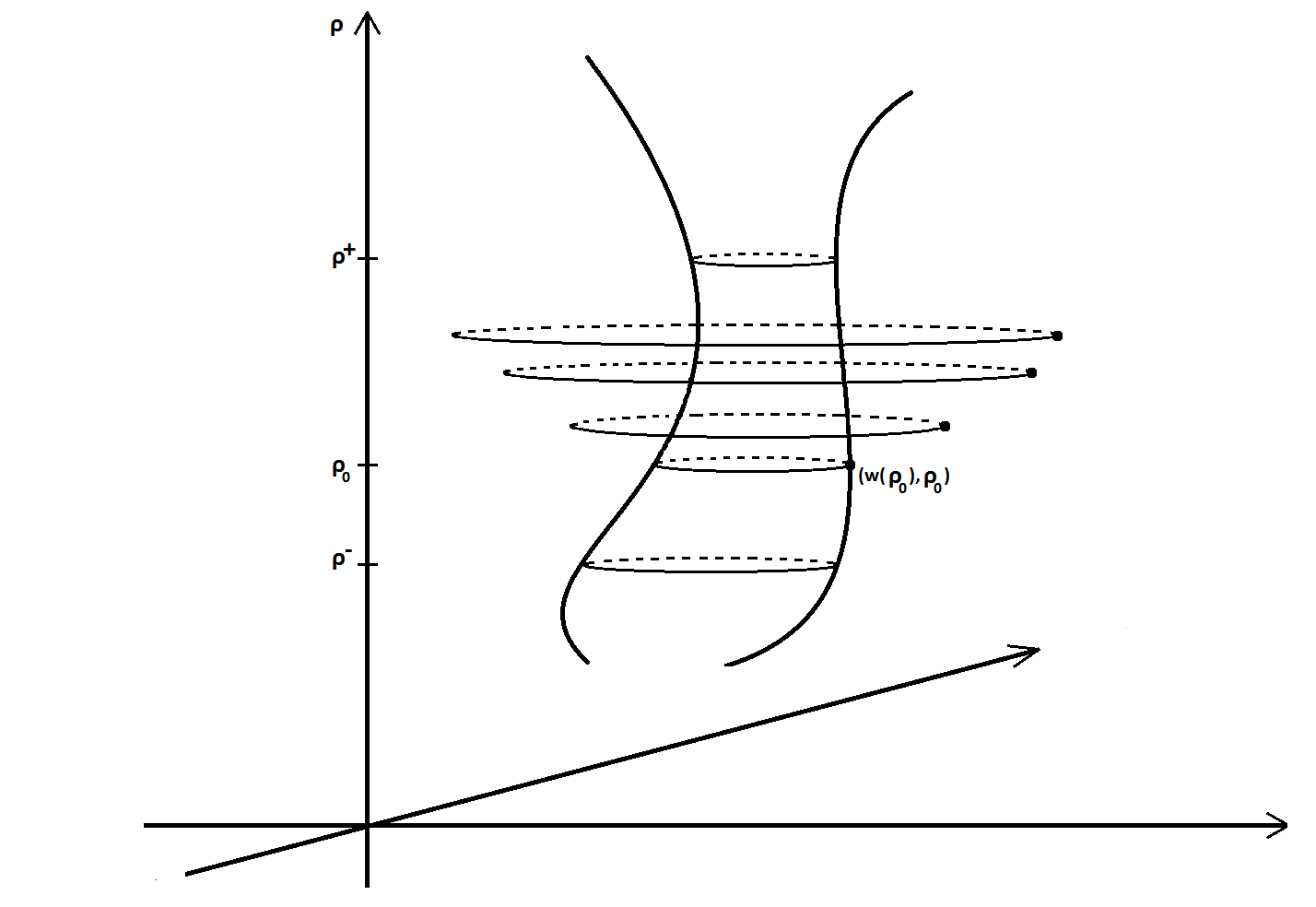}}
    \put(4.17,3.5){$\mathcal{F}$}
    \put(5,0.6){$\mathbb{R}^n$}
  \end{picture}
    \caption{A local bifurcation from the segment of $G$-orbits $\mathcal{F}_{[\rho^-,\rho^+]} \subset \mathcal{F}.$}\label{deflocbif}
\end{figure}
According to Lemma \ref{hesjanpostac}, for each parameter $\rho\in\mathbb{R}$ we have $\dim \ker \nabla_v^2\varphi(w(\rho),\rho)\geq\dim G(w(\rho )).$ 
Now we will formulate the necessary condition for the existence of parameters of local bifurcation.
\begin{Theorem} \label{wkkoniecznybif}
    Under the above assumptions, if, moreover, $\rho _0\in \mathcal{BIF},$ then \[\dim  \ker \nabla ^2_v\varphi (w(\rho_0),\rho_0)>\dim G(w(\rho_0)).\]
\end{Theorem}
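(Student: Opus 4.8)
The plan is to prove the contrapositive: if the critical $G$-orbit $G(w(\rho_0))$ is non-degenerate, then $\rho_0\notin\mathcal{BIF}$. Write $v_0=w(\rho_0)$, $H=G_{v_0}$, and $\mathbb{W}=(T_{v_0}G(v_0))^{\bot}$. By Lemma \ref{hesjanpostac}, non-degeneracy, i.e. $\dim\ker\nabla^2_v\varphi(v_0,\rho_0)=\dim G(v_0)$, is equivalent to the statement that the restriction of the Hessian $\nabla^2_v\varphi(v_0,\rho_0)$ to the normal space $\mathbb{W}$ is an isomorphism: its only kernel is the orbit-tangent direction $T_{v_0}G(v_0)$, on which the Hessian vanishes. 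This invertibility in the normal directions is exactly what will let me apply the implicit function theorem after reducing to a slice.

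First I would pass to a slice of the $G$-action transverse to the orbit at $v_0$ and consider the restricted potential $\phi(w,\rho)=\varphi(v_0+w,\rho)$ on $B^{\varepsilon}_{v_0}(\mathbb{W})\times\mathbb{R}$. Since $\varphi$ is constant on $G$-orbits, $\nabla_v\varphi(v,\rho)$ is orthogonal to $T_vG(v)$ for every $v$; consequently a slice point $v_0+w$ is a critical point of $\varphi(\cdot,\rho)$ if and only if it is a critical point of $\phi(\cdot,\rho)$, because the component of the gradient equation tangent to the orbit is automatically satisfied. At $(0,\rho_0)$ we have $\nabla_w\phi(0,\rho_0)=0$, and the $w$-Hessian $\nabla^2_w\phi(0,\rho_0)$ equals the restriction of $\nabla^2_v\varphi(v_0,\rho_0)$ to $\mathbb{W}$, which is invertible by the previous step. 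Applying the implicit function theorem to $\nabla_w\phi:B^{\varepsilon}_{v_0}(\mathbb{W})\times\mathbb{R}\to\mathbb{W}$ yields $\delta,\eta>0$ and a unique $C^1$ curve $\rho\mapsto u(\rho)\in\mathbb{W}$ with $u(\rho_0)=0$ such that the only zeros of $\nabla_w\phi$ in $B^{\delta}_{v_0}(\mathbb{W})\times(\rho_0-\eta,\rho_0+\eta)$ are the points $(u(\rho),\rho)$.

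Next I would spread this uniqueness over the whole orbit using equivariance. By the slice (tube) theorem a $G$-invariant neighbourhood of $G(v_0)$ has the form $G\,B^{\delta}_{v_0}(\mathbb{W})$, and since $\nabla_v\varphi$ is $G$-equivariant, $\nabla_v\varphi(gv,\rho)=g\nabla_v\varphi(v,\rho)$; combined with Lemmas \ref{fkoustalonymgihomeo} and \ref{lmozerach} this shows that for each $\rho\in(\rho_0-\eta,\rho_0+\eta)$ the full zero set of $\nabla_v\varphi(\cdot,\rho)$ in this neighbourhood is precisely the orbit $G(v_0+u(\rho))$. Finally, the trivial family already supplies solutions $G(w(\rho))$ lying in this neighbourhood for $\rho$ near $\rho_0$; by the local uniqueness of $u$ we must have $G(w(\rho))=G(v_0+u(\rho))$, so every solution of \eqref{eq} near the orbit $G(v_0)\times\{\rho_0\}$ belongs to the trivial family $\mathcal{F}$. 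Hence $(v_0,\rho_0)$ is not an accumulation point of $\{(v,\rho)\in(\Omega\times\mathbb{R})\setminus\mathcal{F}:\nabla_v\varphi(v,\rho)=0\}$, that is $\rho_0\notin\mathcal{BIF}$, which is the desired contrapositive.

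The hard part is the slice reduction in the third paragraph: making rigorous that the slice restriction captures all critical points near the orbit (not merely those lying on the slice), and that the $G$-saturation of the unique slice-solution curve exhausts the zero set of $\nabla_v\varphi$ in a genuine $G$-invariant neighbourhood. This requires the slice theorem, the orthogonality of $\nabla_v\varphi$ to orbits, the control on nearby isotropy provided by Lemma \ref{wystepujacetypyorbitowe}, and care that the a priori only continuous trivial family $w$ coincides locally with the smooth curve $u$ produced by the implicit function theorem.
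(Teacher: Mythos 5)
Your proposal is correct and reaches the same conclusion by the same overall strategy as the paper --- proving the contrapositive via an implicit-function-theorem argument that yields local uniqueness of critical orbits around a non-degenerate orbit --- but the implementation is genuinely different. The paper splits into two cases: when $w(\rho_0)\in\mathbb{V}^G$ it applies the classical implicit function theorem directly, and when $w(\rho_0)\notin\mathbb{V}^G$ it invokes Dancer's $G$-invariant implicit function theorem (see \cite{[DANCER]}, including Remark 4 there) as a black box, obtaining a $G$-equivariant map $\psi$ that parametrizes all zeros of $\nabla_v\varphi$ in a tube around the orbit. You instead give a unified, self-contained argument: restrict to the slice $v_0+\mathbb{W}$, apply the ordinary implicit function theorem there (invertibility of the restricted Hessian is exactly non-degeneracy, by Lemma \ref{hesjanpostac}), and saturate by the $G$-action using the tube theorem together with Lemmas \ref{fkoustalonymgihomeo} and \ref{lmozerach}. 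Your route treats both of the paper's cases at once (when $v_0\in\mathbb{V}^G$ the slice is all of $\mathbb{V}$) and makes the mechanism explicit; what it costs is that you must do by hand the slice-theorem bookkeeping that Dancer's theorem packages, which is precisely where the paper saves work by citation.

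One step of yours deserves a correction, though it does not sink the proof. Your justification of the ``if and only if'' in the slice reduction --- that a zero of $\nabla_w\phi$ on the slice is automatically a zero of the full gradient because ``the component tangent to the orbit is automatically satisfied'' --- is wrong as stated: the gradient at $v_0+w$ is orthogonal to $T_{v_0+w}G(v_0+w)$, not to $T_{v_0}G(v_0)$, and these spaces differ for $w\neq 0$. The claim is true for small $w$ (a vector lying in $T_{v_0}G(v_0)$ and orthogonal to the nearby orbit's tangent space must vanish, since the orthogonal projection of $T_{v_0+w}G(v_0+w)$ onto $T_{v_0}G(v_0)$ is surjective by continuity of the infinitesimal action), but in fact you never need this direction. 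For the contradiction it suffices that every zero of the full gradient in the tube is of the form $g(v_0+u')$ with $u'$ a zero of $\nabla_w\phi$ (the trivial direction, via equivariance and projection), hence $u'=u(\rho)$ by uniqueness; since the trivial family supplies actual zeros $G(w(\rho))$ in the tube, this forces $G(w(\rho))=G(v_0+u(\rho))$, so all zeros in the tube lie in $\mathcal{F}$ and $\rho_0\notin\mathcal{BIF}$ by Definition \ref{biflok}. With that rearrangement your argument is complete.
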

\begin{proof}
To prove this theorem we apply two kinds of implicit function theorems - the classical one and the $G$-invariant one.
Suppose, contrary to our claim, that $\rho_0\!\in\!\mathcal{BIF}$ and $\dim \ker \nabla_v^2\varphi (w(\rho _0),\rho_0)\!=\!  \dim G(w(\rho _0)).$ 
Consider two cases: $w(\rho_0)\in \mathbb{V}^G$ and $w(\rho_0)\notin \mathbb{V}^G.$\\
\emph{Case $w(\rho_0)\in \mathbb{V}^G.$} Since $w(\rho_0)\in\mathbb{V}^G,$ we have 
\[G(w(\rho _0))=\{w(\rho_0)\}\ \text{and}\ 0=\dim G(w(\rho _0))=\dim\ker \nabla_v^2\varphi (w(\rho _0),\rho _0),\]
that is $\det \nabla_v^2\varphi (w(\rho_0),\rho _0)\neq 0.$ The map $\varphi$ is of class $C^2$ and $w$ is continuous, so there exists $\varepsilon>0$ such that
$\det\nabla_v^2\varphi (w(\rho),\rho)\neq 0$ for all parameters $\rho\in (\rho_0-\varepsilon,\rho_0+\varepsilon).$ 
The implicit function theorem applied at point $(w(\rho_0),\rho_0)$ yields that there exist open neighbourhoods $U_{w(\rho_0)}\subset\Omega$ and $U_{\rho_0}\subset\mathbb{R}$ of the points $w(\rho_0)\in\Omega$ and $\rho_0\in\mathbb{R},$ respectively, and exactly one continuous map $\psi \ : U_{\rho_0}\rightarrow U_{w(\rho_0)}$ such that
\[\nabla_v\varphi(v,\rho)=0 \mbox{ and } (v,\rho)\in U_{w(\rho_0)}\times U_{\rho_0} \mbox{ if and only if } v=\psi(\rho).\] Therefore $w(\rho)\in \mathbb{V}^G$ for all
parameters $\rho\in U_{\rho_0},\ \psi=w\Big|_{U_{\rho_0}}$ and hence 
\[(\nabla_v\varphi)^{-1}(0)\cap(U_{w(\rho_0)}\times U_{\rho_0}\;\backslash\mathcal{F})=\emptyset ,\]
which contradicts that $\rho_0\in \mathcal{BIF}.$\\
\emph{Case $w(\rho_0)\notin \mathbb{V}^G.$} As a consequence of the $G$-invariant implicit function theorem (see \cite{[DANCER]}) and Remark 4 of \cite{[DANCER]} there exist an open, $G$-invariant neighbourhood $\Theta\subset\Omega$ of the $G$-orbit $G(w(\rho_0))$ and a continuous, $G$-equivariant map
$\psi : G(w(\rho_0))\times (\rho_0-\varepsilon, \rho_0+\varepsilon)\rightarrow \mathbb{V}$ such that if 
\[\mathcal{N}=\{(\psi(v,\rho),\rho):\rho\in(\rho_0-\varepsilon,\rho_0+\varepsilon),v\in G(w(\rho_0))\},\]
then for all $(v,\rho)\in\Theta\times(\rho_0-\varepsilon,\rho_0+\varepsilon)$
\begin{equation}
 \nabla_v\varphi(v,\rho)=0 \ \text{if and only if}\ (v,\rho)\in\mathcal{N}.
\end{equation} 
Hence for all $\rho\in (\rho_0-\varepsilon, \rho_0+\varepsilon)$ if $\nabla_v\varphi(v_1,\rho)=0$ and $\nabla_v\varphi(v_2,\rho)=0,$
where $v_1, \ v_2 \in \Theta,$ then $v_1=\psi(v_1',\rho)$ and $v_2=\psi(v_2',\rho)$ for some $v_1',\ v_2'\in G(w(\rho_0)).$ Thus there exists $g\in G$ such that $v_1'=gv_2'.$
Consequently, $v_1=\psi(v_1',\rho)=\psi(gv_2',\rho)=g\psi(v_2',\rho)=gv_2.$ On the other hand, we have the map $w :\mathbb{R}\rightarrow\Omega$ such that
$\nabla_v\varphi(w(\rho),\rho)=0$ for all parameters $\rho\in \mathbb{R}.$ 
Thus for all $\rho\in(\rho_0-\varepsilon,\rho_0+\varepsilon)$ if $w(\rho)\in\Theta$ then $(w(\rho),\rho)\in\mathcal{N},$ that is $w(\rho)=\psi(gw(\rho_0),\rho)$ for some $g\in G.$ Since $w$ is continuous and $w(\rho_0)\in\Theta,$ there exists small enough $\tilde{\varepsilon}\leq\varepsilon$ such that 
$w(\rho)\in\Theta$ for all $\rho\in(\rho_0-\tilde{\varepsilon},\rho_0+\tilde{\varepsilon}).$ 
Hence $(\nabla_v\varphi)^{-1}(0)\cap(\Theta\times (\rho_0-\tilde{\varepsilon},\rho_0+\tilde{\varepsilon})\backslash\mathcal{F})=\emptyset,$ which again contradicts that $\rho_0\in \mathcal{BIF}.$
\end{proof}
After introducing the local bifurcation of $G$-orbits of solutions of (\ref{eq}) we now turn to the notion of global bifurcation of these solutions. Denote by $C(\rho_0)$ the connected component of the set 
$cl(\{(v,\rho)\!\in\!\! (\Omega\times\mathbb{R}) \backslash\mathcal{F}: \nabla _v \varphi (v,\rho )=0\})\cup\mathcal{F}_{\rho_0}$ containing $\mathcal{F}_{\rho_0}$
and by $C([\rho^-,\rho^+])$ the connected component of the set $cl(\{(v,\rho)\!\in\! (\Omega\times\mathbb{R})\backslash\mathcal{F} : \nabla _v \varphi (v,\rho )=0\})\cup\mathcal{F}_{[\rho^-,\rho^+]}$ containing $\mathcal{F}_{[\rho^-,\rho^+]}.$
\begin{Definition}\label{bifglob}
    Fix parameters $\rho^{\pm}\in\mathbb{R}$ such that $\rho^- < \rho^+.$ A global bifurcation from the segment of $G$-orbits $\mathcal{F}_{[\rho^-,\rho^+]} \subset \mathcal{F}$ of solutions
    of (\ref{eq}) occurs if the component $C([\rho^-,\rho^+])\subset\Omega\times\mathbb{R}$ 
    satisfies the following condition:     
    \begin{equation}\label{Rabinowitzalternative}
     C([\rho^-,\rho^+])\ \text{is not compact or}\  
    (C([\rho^-,\rho^+])\backslash \mathcal{F}_{[\rho^-,\rho^+]})\cap \mathcal{F}\neq \emptyset
    \end{equation}
    (see Figure \ref{defglobbif}).\\
    We call a parameter $\rho_0\in[\rho^-,\rho^+]$ a parameter of global bifurcation if the component $C(\rho_0)\subset\Omega\times\mathbb{R}$ is not compact or 
    $(C(\rho_0)\backslash \mathcal{F}_{[\rho^-,\rho^+]})\cap \mathcal{F}\neq \emptyset.$ A $G$-orbit $\mathcal{F}_{\rho_0}\subset\mathcal{F}_{[\rho^-,\rho^+]}$ is called a $G$-orbit of global bifurcation. We denote by $\mathcal{GLOB}$ the set of all parameters of global bifurcation.
\end{Definition}
\begin{figure}[h!]
 \centering
 \setlength{\unitlength}{0.1\textwidth}
    \begin{picture}(4,4)
  \put(0,0){\includegraphics[height=0.4\textwidth]{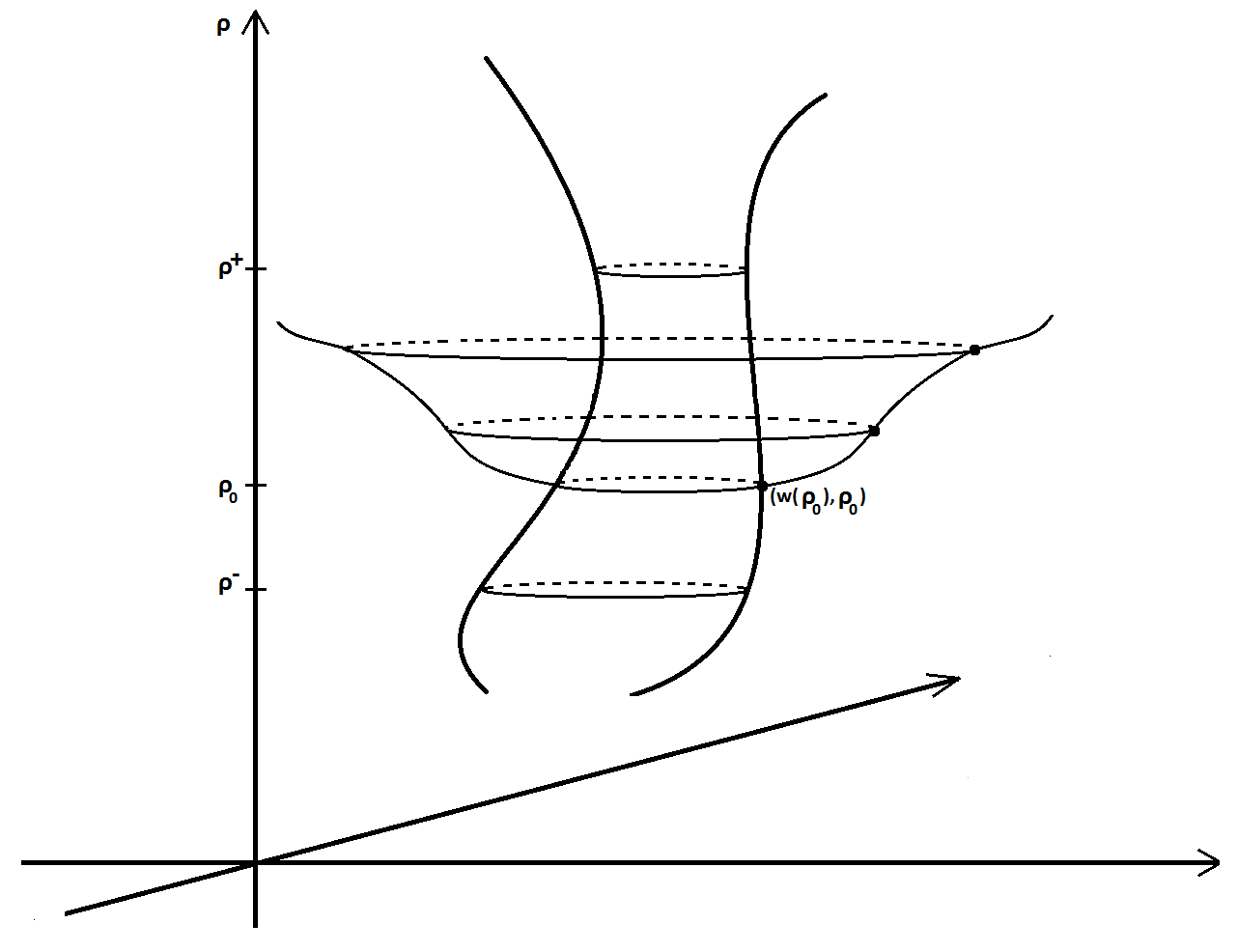}}
  \put(3.6,3.5){$\mathcal{F}$}
  \put(4.5,0.6){\makebox(0,0)[b]{$\mathbb{R}^n$}}
  \end{picture}
   \caption{A global bifurcation from the segment of $G$-orbits $\mathcal{F}_{[\rho^-,\rho^+]} \subset \mathcal{F}.$}\label{defglobbif}
  \end{figure}
The condition \eqref{Rabinowitzalternative} is referred to as the Rabinowitz type alternative (\cite{[RABINOWITZ]}). 
Notice that $\mathcal{GLOB}\subset\mathcal{BIF}.$ Below we define the bifurcation index from the segment $[\rho^-,\rho^+]$ to be an element $BIF_{[\rho^-,\rho^+]}\in U(G).$
\begin{Definition} \label{indbif}
    Fix parameters $\rho^{\pm}\in\mathbb{R}, \ \rho^- < \rho^+$ such that $\rho^{\pm}\notin \mathcal{BIF}$ and fix open, $G$-invariant neighbourhoods $\Theta^{\pm}\subset\Omega$ of
    $G$-orbits $G(w(\rho^{\pm}))$ such that 
    \[(\nabla_v\varphi)^{-1}(0)\cap(\mathrm{cl}(\Theta^{\pm})\times\{\rho^{\pm}\})=\mathcal{F}_{\rho^{\pm}}.\] We define the bifurcation index from the segment $[\rho^-,\rho^+]$ by
    \[BIF_{[\rho^-,\rho^+]}=\dg(\nabla_v\varphi(\cdot , \rho^{+}),\Theta^+)- \dg(\nabla_v\varphi(\cdot , \rho^{-}) ,\Theta^-)\in U(G).\]
\end{Definition}
The definition of $BIF_{[\rho^-,\rho^+]}$ does not depend on the choice of the $G$-invariant neighbourhood of the $G$-orbit. In what follows we formulate a sufficient condition for the existence of global
bifurcation from the segment of solutions of $(\ref{eq})$.
\begin{Theorem}\label{twierbif1}
   Fix parameters $\rho^{\pm}\in\mathbb{R}, \ \rho^- < \rho^+$ such that $\rho^{\pm}\notin \mathcal{BIF}$ and assume that $BIF_{[\rho^-,\rho^+]}\neq \mathbf{0} \in U(G).$ Then a global bifurcation
   from the segment of $G$-orbits $\mathcal{F}_{[\rho^-,\rho^+]} \subset \mathcal{F}$ of solutions of (\ref{eq}) occurs.
\end{Theorem}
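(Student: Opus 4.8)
The plan is to argue by contradiction, assuming no global bifurcation occurs from the segment $\mathcal{F}_{[\rho^-,\rho^+]}$, and to derive that $BIF_{[\rho^-,\rho^+]}=\mathbf{0}$, contradicting the hypothesis. Failure of global bifurcation means the component $C([\rho^-,\rho^+])$ is compact and $(C([\rho^-,\rho^+])\backslash\mathcal{F}_{[\rho^-,\rho^+]})\cap\mathcal{F}=\emptyset$; that is, the nontrivial solutions connected to the segment stay away from the rest of the trivial family and form a bounded, closed (hence compact) piece of the solution set. The whole point is to exploit this compactness to build, via a separation argument, an isolating tube around the segment on which the degree can be computed from the two ends.

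First I would set $S=cl(\{(v,\rho)\in(\Omega\times\mathbb{R})\backslash\mathcal{F}:\nabla_v\varphi(v,\rho)=0\})\cup\mathcal{F}_{[\rho^-,\rho^+]}$ restricted to a suitable compact region, and apply the separation theorem (Theorem \ref{separationlemma}) to the compact space $K$ consisting of the relevant solutions over a slightly enlarged parameter window, with $A=\mathcal{F}_{[\rho^-,\rho^+]}$ (or the distinguished compact component $C([\rho^-,\rho^+])$) and $B$ the part of the solution set that is not connected to the segment. Because no connected set joins $A$ to $B$, Theorem \ref{separationlemma} produces disjoint compact sets $K_A,K_B$ covering $K$ with $A\subset K_A$, $B\subset K_B$. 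The set $K_A$ is then a compact neighbourhood (within the solution set) of the whole segment, separated from all other solutions. I would thicken $K_A$ to an open, bounded, $G$-invariant set $\Xi\subset\Omega\times\mathbb{R}$ containing the segment, whose boundary meets no zeros of $\nabla_v\varphi$, using Lemmas \ref{fkoustalonymgihomeo} and \ref{lmozerach} to guarantee $G$-invariance of the enlargement. By compactness and the assumption $\rho^{\pm}\notin\mathcal{BIF}$, the slices $\Xi_\rho$ for $\rho$ outside $[\rho^-,\rho^+]$ (but near it) contain only the trivial orbit $\mathcal{F}_\rho$, so for $\rho$ just below $\rho^-$ and just above $\rho^+$ we may take $\Xi_\rho=\Theta^-$ and $\Xi_\rho=\Theta^+$ from Definition \ref{indbif}.

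The crux is a homotopy/invariance argument in the parameter $\rho$: I would regard $\rho\mapsto\nabla_v\varphi(\cdot,\rho)$ as a family of $\Xi_\rho$-admissible equivariant gradient maps and use the homotopy invariance of $\dg$ (Theorem \ref{properties_of_degree}.(3)), together with excision (Theorem \ref{properties_of_degree}.(4)), to show that on the one hand $\dg(\nabla_v\varphi(\cdot,\rho^+),\Theta^+)=\dg(\nabla_v\varphi(\cdot,\rho^-),\Theta^-)$ because the separated solution tube $\Xi$ furnishes an admissible homotopy connecting the two ends with no zeros escaping through the lateral boundary. This forces $BIF_{[\rho^-,\rho^+]}=\dg(\nabla_v\varphi(\cdot,\rho^+),\Theta^+)-\dg(\nabla_v\varphi(\cdot,\rho^-),\Theta^-)=\mathbf{0}$, the desired contradiction.

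The main obstacle I anticipate is the passage from the purely set-theoretic separation furnished by Theorem \ref{separationlemma} to an honest open, bounded, $G$-invariant set $\Xi$ with a zero-free lateral boundary that simultaneously restricts to $\Theta^{\pm}$ at the two endpoints; in particular one must ensure the admissibility $(\nabla_v\varphi)^{-1}(0)\cap\partial\Xi=\emptyset$ holds uniformly in $\rho$, so that the $\rho$-homotopy is legitimately $\Xi$-admissible at every intermediate parameter. Handling this carefully — confining the analysis to a compact parameter range, checking that the non-segment solutions $B$ stay a positive distance from $K_A$, and invoking $G$-invariance of the thickening — is where the real work lies; once $\Xi$ is constructed, the degree-theoretic conclusion follows formally from the listed properties of $\dg$.
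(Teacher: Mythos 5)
Your proposal follows essentially the same route as the paper's own proof: argue by contradiction assuming the alternative \eqref{Rabinowitzalternative} fails, apply the separation theorem (Theorem \ref{separationlemma}) to the compact solution set with $A$ the segment (or its component $C([\rho^-,\rho^+])$) and $B$ the solutions not connected to it, convert the resulting splitting $K_A\cup K_B$ into an open, bounded, $G$-invariant isolating neighbourhood with zero-free boundary via Lemmas \ref{fkoustalonymgihomeo} and \ref{lmozerach}, and then use the (generalised) homotopy invariance and excision properties of $\dg$ to conclude $BIF_{[\rho^-,\rho^+]}=\mathbf{0}$, a contradiction. The differences are only cosmetic: you thicken $K_A$ whereas the paper removes a neighbourhood of $K_B$ from a tube $Q$ confined to the strip $\Omega\times[\rho^-,\rho^+]$, and the paper separates out the degenerate case $C([\rho^-,\rho^+])=\mathcal{F}_{[\rho^-,\rho^+]}$ as a preliminary step.
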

\begin{proof}
First, we will prove that there exists a parameter $\rho_0\in (\rho^{-},\rho^{+}) $ such that $\rho_0\in\mathcal{BIF}.$ 
Suppose, contrary to our claim, that $BIF_{[\rho^-,\rho^+]}\neq \mathbf{0}$ and $\mathcal{BIF}\cap [\rho^-, \rho^+]= \emptyset.$ 
Hence there exists an open, bounded $G$-invariant neighbourhood $U\subset\Omega\times\mathbb{R}$ of the set $\mathcal{F}_{[\rho^-,\rho^+]}$ such that 
$(\nabla _v\varphi)^{-1}(0)\cap (\mathrm{cl}(U)\backslash\mathcal{F})=\emptyset .$
In particular, we can pick open, bounded $G$-invariant neighbourhoods $\Theta^{\pm}\subset\Omega$ of $G$-orbits $G(w(\rho^{\pm}))$ such that 
$(\nabla_v\varphi)^{-1}(0)\cap(\mathrm{cl}(\Theta^{\pm})\times\{\rho^{\pm}\})=\mathcal{F}_{\rho^{\pm}}.$ 
The generalised homotopy invariance property of the degree (Theorem \ref{properties_of_degree}.(3)) yields that 
\[\dg(\nabla\varphi(\cdot , \rho^-) ,U\cap(\Omega\times\{\rho^-\}))= \dg(\nabla\varphi(\cdot , \rho^+) ,U\cap(\Omega\times\{\rho^+\})),\] 
while from excision property (Theorem \ref{properties_of_degree}.(4)) we obtain the following equalities: 
\begin{align*}
\dg(\nabla\varphi(\cdot ,\rho^-), U\cap(\Omega\times\{\rho^-\}))=\dg(\nabla\varphi(\cdot ,\rho^-) ,\Theta^-),\\  
\dg(\nabla\varphi(\cdot , \rho^+),U\cap(\Omega\times\{\rho^+\}))= \dg(\nabla\varphi(\cdot , \rho^+) ,\Theta^+),
\end{align*}
a contradiction. 
We have thus proved that there exists a $G$-orbit of local bifurcation $\mathcal{F}_{\rho_0}$ of solutions of (\ref{eq}), or equivalently, the parameter $\rho_0$ is a parameter of local bifurcation, that is $\rho_0\in\mathcal{BIF}.$\\
Secondly, we will prove that a global bifurcation from 
$\mathcal{F}_{[\rho^-,\rho^+]}$ occurs. The proof will be divided into two steps.\\
\emph{Step 1.} We first prove that $C([\rho^-,\rho^+])\neq \mathcal{F}_{[\rho^-,\rho^+]}.$ Suppose the contrary and choose an open, bounded set
$Q\subset\Omega\times\mathbb{R}$ such that $\mathcal{F}_{(\rho^-,\rho^+)}\subset Q$ and 
$\Omega\times((-\infty,\rho^-)\cup(\rho^+,+\infty))\cap Q=\emptyset.$
Additionally, let 
$(\Omega\times\{\rho^{\pm}\})\cap\partial Q=\mathrm{cl}(\Theta^\pm)\times\{\rho^\pm\},$
where $\Theta^{\frac{+}{}}\subset \Omega$ are chosen similarly as above, that is $\Theta^{\frac{+}{}}\subset \Omega$ are open, bounded, $G$-invariant neighbourhoods of the $G$-orbits
$G(w(\rho^{\frac{+}{}}))$ 
such that $(\nabla_v\varphi)^{-1}(0)\cap(\mathrm{cl}(\Theta^{\pm})\times\{\rho^{\pm}\})=\mathcal{F}_{\rho^{\pm}}.$
Observe that $\mathrm{cl}(Q)\cap (\nabla _v \varphi )^{-1}(0)$ is compact and put 
\begin{align*}
 K & =\mathrm{cl}(Q)\cap (\nabla _v \varphi )^{-1}(0),\\
 A & =\mathcal{F}_{[\rho^{-},\rho^{+}]},\\
 B & =\partial Q\cap \mathrm{cl}(\{(v,\rho)\in (\Omega\times\mathbb{R}) \backslash\mathcal{F} :\ \nabla _v \varphi (v,\rho )=0\}).
\end{align*} 
These sets satisfy assumptions of Theorem \ref{separationlemma}, hence we obtain sets $K_A$ and $K_B$ from the conclusion of this theorem. Since these sets are compact and
disjoint, there exists $\eta >0$ such that $\eta$-neighbourhoods $K_A(\eta),\ K_B(\eta)$ of the sets $K_A, \ K_B$ are disjoint. 
Now put 
\[U=G(Q\backslash \mathrm{cl}(K_B(\frac{1}{2}\eta)))\]
and observe that $\partial U \cap (\nabla _v\varphi)^{-1}(0)= \mathcal{F}_{\rho^{\pm}}$ by Lemma \ref{lmozerach}. 
The set $U$ is $G$-invariant and open by Lemma \ref{fkoustalonymgihomeo}. Again, using the generalised homotopy invariance property of the degree (Theorem \ref{properties_of_degree}.(3)), we obtain the equality
\[\dg(\nabla\varphi(\cdot , \rho^-) ,\Theta^-)=\dg(\nabla\varphi(\cdot , \rho^+) ,\Theta^+),\] a contradiction.\\
\emph{Step 2.} Now we prove that either the component $C([\rho^-,\rho^+])$ is not compact or that the sets $C([\rho^-,\rho^+])\backslash \mathcal{F}_{[\rho^-,\rho^+]}$ and $\mathcal{F}$ have nonempty intersection. Suppose the contrary and pick an open, bounded $\varepsilon$-neighbourhood
$(\mathrm{cl}(C([\rho^-,\rho^+])\backslash \mathcal{F}_{[\rho^-,\rho^+]}))(\varepsilon)\subset\Omega\times\mathbb{R}$ of $\mathrm{cl}(C([\rho^-,\rho^+])\backslash \mathcal{F}_{[\rho^-,\rho^+]})$ such that 
$\mathrm{cl}((\mathrm{cl}(C([\rho^-,\rho^+])\backslash \mathcal{F}_{[\rho^-,\rho^+]}))(\varepsilon))\cap \mathcal{F}_{(-\infty,\rho^{-}]\cup[\rho^+,+\infty)}=\emptyset$
for some (small enough) $\varepsilon>0.$ Let $Q\subset\Omega\times\mathbb{R}$ and $\Theta^{\pm}\subset\Omega$ be the sets chosen as in Step 1 satisfying an additional condition: 
$(\Theta^{\frac{+}{}}\times\{\rho^{\frac{+}{}}\})\cap (\mathrm{cl}(C([\rho^-,\rho^+])\backslash \mathcal{F}_{[\rho^-,\rho^+]}))(\varepsilon)=\emptyset.$ 
Now define 
$Q_1=Q\cup (\mathrm{cl}(C([\rho^-,\rho^+])\backslash \mathcal{F}_{[\rho^-,\rho^+]}))(\varepsilon)$
and notice that $\mathrm{cl}(Q_1)\cap (\nabla _v \varphi )^{-1}(0)$ is a compact set. Again put 
\begin{align*} 
K & =\mathrm{cl}(Q_1)\cap (\nabla _v \varphi )^{-1}(0),\\
A & =C([\rho^-,\rho^+]),\\
B & =\partial Q_1\cap \mathrm{cl}(\{(v,\rho)\in (\Omega\times\mathbb{R})\backslash\mathcal{F} :\ \nabla _v \varphi (v,\rho )=0\}).
\end{align*}
The rest follows analogously to Step 1, but with $Q_1$ instead of $Q$. So defined sets satisfy the assumptions of Theorem \ref{separationlemma}, therefore we obtain $K_A$ and $K_B$ in an
identical manner as earlier. Once again they are compact and disjoint, which enables us to pick their disjoint $\eta$-neighbourhoods $K_A(\eta)$ and $K_B(\eta)$. Similarly to Step 1, we now put
\[U=G(Q_1\backslash \mathrm{cl}(K_B(\frac{1}{2}\eta)))\]
and notice that $U$ is open, $G$-invariant and $\partial U \cap (\nabla _v \varphi )^{-1}(0)= \mathcal{F}_{\rho^{\pm}}.$ 
Once again the generalised  homotopy invariance property of the degree (Theorem \ref{properties_of_degree}.(3)) implies equality of degrees, a contradiction.
\end{proof}
A consequence of Theorem \ref{wkkoniecznybif} is that there is no $G$-orbit of any type of bifurcation among non-degenerate $G$-orbits, which is made precise by the following:
\begin{Corollary} \label{twbif1corollary}
    Fix parameters $\rho^{\pm},\rho_0\in\mathbb{R}, \ \rho^- < \rho_0< \rho^+$ and assume that $\mathcal{F}_{\rho_0}$ is the only degenerate $G$-orbit in the segment $[\rho^-,\rho^+].$ 
    Thus, if $BIF_{[\rho^-,\rho^+]}\neq \mathbf{0} \in U(G),$ then there exists exactly one $G$-orbit of local bifurcation in the segment $[\rho^-,\rho^+]$ and this orbit is precisely $\mathcal{F}_{\rho_0}.$
    Moreover, it is also a $G$-orbit of global bifurcation.
\end{Corollary}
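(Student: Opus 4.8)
The plan is to read the hypotheses through Theorem~\ref{wkkoniecznybif} and Theorem~\ref{twierbif1}: the first pins down \emph{where} a bifurcation can sit, the second guarantees that a global one genuinely occurs, and a short point-set argument then concentrates that global branch at $\rho_0$. First I would settle the local statement. Because $\rho^-\neq\rho_0\neq\rho^+$, the orbits $\mathcal{F}_{\rho^\pm}$ are non-degenerate, i.e.\ $\dim\ker\nabla_v^2\varphi(w(\rho^\pm),\rho^\pm)=\dim G(w(\rho^\pm))$, so the contrapositive of Theorem~\ref{wkkoniecznybif} yields $\rho^\pm\notin\mathcal{BIF}$. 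The hypotheses of Theorem~\ref{twierbif1} are thus met, and since $BIF_{[\rho^-,\rho^+]}\neq\mathbf{0}$ a global bifurcation from $\mathcal{F}_{[\rho^-,\rho^+]}$ occurs; moreover the first part of its proof produces a parameter in $\mathcal{BIF}\cap(\rho^-,\rho^+)$. Conversely, Theorem~\ref{wkkoniecznybif} forces every element of $\mathcal{BIF}\cap[\rho^-,\rho^+]$ to correspond to a degenerate orbit, and by hypothesis $\mathcal{F}_{\rho_0}$ is the only degenerate orbit in the segment. Combining the two inclusions gives $\mathcal{BIF}\cap[\rho^-,\rho^+]=\{\rho_0\}$, which is exactly the assertion that $\mathcal{F}_{\rho_0}$ is the unique $G$-orbit of local bifurcation in $[\rho^-,\rho^+]$.

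For the global statement I would compare the two components directly. Put $S=\mathrm{cl}(\{(v,\rho)\in(\Omega\times\mathbb{R})\backslash\mathcal{F}:\nabla_v\varphi(v,\rho)=0\})$, so that $C(\rho_0)$ and $C([\rho^-,\rho^+])$ are the components of $S\cup\mathcal{F}_{\rho_0}$ and $S\cup\mathcal{F}_{[\rho^-,\rho^+]}$ through $\mathcal{F}_{\rho_0}$ and $\mathcal{F}_{[\rho^-,\rho^+]}$. The decisive remark is that a trivial point $(w(\rho),\rho)$ lies in $S$ precisely when it is an accumulation point of nontrivial solutions, i.e.\ precisely when $\rho\in\mathcal{BIF}$; hence by the previous paragraph $S\cap\mathcal{F}_{[\rho^-,\rho^+]}=\mathcal{F}_{\rho_0}$. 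Setting $M=C([\rho^-,\rho^+])\cap S$ and using $C([\rho^-,\rho^+])\subseteq S\cup\mathcal{F}_{[\rho^-,\rho^+]}$, I get $C([\rho^-,\rho^+])=M\cup\mathcal{F}_{[\rho^-,\rho^+]}$ with $M\cap\mathcal{F}_{[\rho^-,\rho^+]}=\mathcal{F}_{\rho_0}$.

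The heart of the proof is to show that $M$ is connected. I would argue by contradiction: if $M=M_1\sqcup M_2$ is a separation into relatively closed sets with the connected set $\mathcal{F}_{\rho_0}$ contained in $M_1$, then $M_1\cup\mathcal{F}_{[\rho^-,\rho^+]}$ and $M_2$ are disjoint, nonempty and closed, cover $C([\rho^-,\rho^+])$ (because $M_2$ misses $\mathcal{F}_{[\rho^-,\rho^+]}$), and therefore disconnect $C([\rho^-,\rho^+])$, a contradiction. Connectedness of $M$ gives $M\subseteq C(\rho_0)$, while $C(\rho_0)\subseteq C([\rho^-,\rho^+])\cap(S\cup\mathcal{F}_{\rho_0})=M$; thus $C(\rho_0)=M$ and $C([\rho^-,\rho^+])=C(\rho_0)\cup\mathcal{F}_{[\rho^-,\rho^+]}$. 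Finally I transfer condition~\eqref{Rabinowitzalternative}: as $\mathcal{F}_{[\rho^-,\rho^+]}$ is compact, non-compactness of $C([\rho^-,\rho^+])$ forces non-compactness of $C(\rho_0)$; and since $C([\rho^-,\rho^+])\backslash\mathcal{F}_{[\rho^-,\rho^+]}=C(\rho_0)\backslash\mathcal{F}_{[\rho^-,\rho^+]}$, the second alternative also passes to $C(\rho_0)$. In either case $\rho_0\in\mathcal{GLOB}$, so $\mathcal{F}_{\rho_0}$ is a $G$-orbit of global bifurcation.

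The main obstacle, and the only genuinely topological step, is this connectedness of $M$: it says that deleting the non-bifurcating part $\mathcal{F}_{[\rho^-,\rho^+]}\backslash\mathcal{F}_{\rho_0}$ of the trivial segment cannot detach the nontrivial branch from $\mathcal{F}_{\rho_0}$. It rests entirely on the fact that non-bifurcation trivial points are excluded from $S=\mathrm{cl}(S')$, together with the connectedness of the single orbit $\mathcal{F}_{\rho_0}$ (automatic for $G$ connected, as in the $SO(2)$ applications), so that any separation of $M$ would already split $C([\rho^-,\rho^+])$.
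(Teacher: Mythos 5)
Your proof is correct and follows the route the paper intends: the paper states this corollary without proof, as an immediate consequence of combining the necessary condition (Theorem~\ref{wkkoniecznybif}, which confines $\mathcal{BIF}\cap[\rho^-,\rho^+]$ to the unique degenerate orbit $\mathcal{F}_{\rho_0}$) with the sufficient condition (Theorem~\ref{twierbif1}, which produces the global bifurcation from the segment), exactly as in your first paragraph. Your remaining paragraphs simply fill in the point-set detail the paper leaves implicit --- namely that $S\cap\mathcal{F}_{[\rho^-,\rho^+]}=\mathcal{F}_{\rho_0}$ forces $C([\rho^-,\rho^+])=C(\rho_0)\cup\mathcal{F}_{[\rho^-,\rho^+]}$, so the Rabinowitz alternative transfers from the segment component to $C(\rho_0)$ --- which is a sound and welcome addition (modulo the connectedness of orbits, an assumption the paper itself makes implicitly in defining $C(\rho_0)$ and $C([\rho^-,\rho^+])$).
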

What remains is a question how to distinguish the degrees of critical orbits. It transpires that under a very mild assumption this can be done by means of a simple condition.
\begin{Theorem} \label{twchangedegree} 
Fix parameters $\rho^{\pm}\in\mathbb{R}, \ \rho^- < \rho^+$ and assume additionally that the $G$-orbits $G(w(\rho^\pm))$ are non-degenerate and $(G_{w(\rho^+)})=(G_{w(\rho^-)})=(H).$ Thus, if\\ 
$\det B(w(\rho^-))\cdot \det B(w(\rho^+)) < 0,$ then  
\[\dg(\nabla_v\varphi(\cdot , \rho^{-}) ,\Theta^{-})\neq \dg(\nabla_v\varphi(\cdot , \rho^{+}) ,\Theta^{+}).\]
\end{Theorem}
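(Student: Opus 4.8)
The plan is to compare the two degrees coordinatewise in the Euler ring $U(G)$. Since $(U(G),+)$ is free abelian with basis $\{\chi_G(G/H'^+):(H')\in\overline{sub}[G]\}$, two of its elements are distinct as soon as they disagree in a single coordinate, so I would focus exclusively on the coordinate indexed by $(H)=(G_{w(\rho^\pm)})$, which I denote $\dg_{(H)}$.

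First I would compute this $(H)$-coordinate for each of the non-degenerate orbits $G(w(\rho^\pm))$, splitting into two cases according to whether the orbit is special. If $m^-(C(w(\rho^\pm)))=0$, then on a sufficiently small isolating neighbourhood the map $\varphi(\cdot,\rho^\pm)$ is itself a special Morse function with the single critical orbit $G(w(\rho^\pm))$, so the defining formula for the degree yields $\dg_{(H)}(\nabla_v\varphi(\cdot,\rho^\pm),\Theta^\pm)=(-1)^{m^-(B(w(\rho^\pm)))}$ with no further terms. If instead $m^-(C(w(\rho^\pm)))\neq 0$, I would invoke Lemma \ref{postacstopnia}, whose formula again records the $(H)$-coordinate as $(-1)^{m^-(B(w(\rho^\pm)))}$; the remaining summands there sit in strictly smaller orbit types $(H')<(H)$ and therefore do not affect the $(H)$-coordinate. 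In either case the $(H)$-coordinate equals $(-1)^{m^-(B(w(\rho^\pm)))}$.

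Next I would convert the determinant hypothesis into a parity statement. By Lemma \ref{hesjanpostac} together with the non-degeneracy of the orbits, the block $B(w(\rho^\pm))$ is a symmetric, non-singular matrix, so its eigenvalues are real and nonzero and $\det B$ is their product; hence $\sign(\det B(w(\rho^\pm)))=(-1)^{m^-(B(w(\rho^\pm)))}$. The assumption $\det B(w(\rho^-))\cdot\det B(w(\rho^+))<0$ is then precisely the statement $(-1)^{m^-(B(w(\rho^-)))}\cdot(-1)^{m^-(B(w(\rho^+)))}=-1$, i.e. the Morse indices $m^-(B(w(\rho^-)))$ and $m^-(B(w(\rho^+)))$ have opposite parity.

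Combining the two steps, the $(H)$-coordinates of the two degrees are $(-1)^{m^-(B(w(\rho^-)))}$ and $(-1)^{m^-(B(w(\rho^+)))}$, which are of opposite sign; by freeness of $U(G)$ this forces $\dg(\nabla_v\varphi(\cdot,\rho^-),\Theta^-)\neq\dg(\nabla_v\varphi(\cdot,\rho^+),\Theta^+)$, as desired. I do not expect any serious obstacle here: the only delicate points are confirming that the lower orbit-type contributions coming from Lemma \ref{postacstopnia} never interfere with the $(H)$-coordinate, and checking that the special and non-special cases are handled uniformly, which they are since both produce the same $(H)$-coordinate $(-1)^{m^-(B)}$.
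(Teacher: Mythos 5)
Your proposal is correct and follows essentially the same route as the paper's own proof: the same case split on whether $m^-(C(w(\rho^\pm)))=0$, the same use of Lemma \ref{postacstopnia} in the non-special case to isolate the $(H)$-coordinate as $(-1)^{m^-(B(w(\rho^\pm)))}$ with all other contributions in strictly smaller orbit types, and the same translation of the sign condition on $\det B$ into a parity disagreement of the Morse indices. The only difference is presentational: you spell out the coordinatewise comparison in the free abelian group $U(G)$ and the eigenvalue-sign argument, which the paper leaves implicit.
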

\begin{proof}
According to Lemma \ref{hesjanpostac}, there is a special form of $\nabla^2_v\varphi(w(\rho^{\pm}),\rho^{\pm})$ given by the formula (\ref{hesjanpostacwzor}). Observe that if $m^-(C(w(\rho^\pm)))=0,$
then 
\begin{equation}\label{formula1}
\dg(\nabla_v\varphi(\cdot , \rho^\pm) ,\Theta^\pm)= (-1)^{m^-(B(w(\rho^\pm)))}\chi_G(G/H^+).
\end{equation}
When, on the other hand, $m^-(C(w(\rho^\pm)))\neq 0,$ by Lemma \ref{postacstopnia}, we obtain that 
\begin{align}\label{formula2}
\dg(\nabla_v\varphi(\cdot, \rho^\pm) ,\Theta^\pm) & = (-1)^{m^-(B(w(\rho^\pm)))}\chi_G(G/H^+)\ +\nonumber \\ & + \sum_{(H')\in \overline{sub}[G],(H')<(H)}\dg_{(H')}(\nabla_v\varphi(\cdot,
\rho^\pm) ,\Theta^\pm)\cdot\chi_G(G/H'^+).
\end{align}
Since $\det B(w(\rho^-))\cdot \det B(w(\rho^+)) < 0,$ we get that $(-1)^{m^-(B(w(\rho^-)))}\neq(-1)^{m^-(B(w(\rho^+)))}.$ Using the formulas \eqref{formula1} and \eqref{formula2} we complete the proof.
\end{proof}
\begin{Remark} \label{twbif1remark1} 
In the case of Theorem \ref{twchangedegree} with $(G_{w(\rho^-)})\neq(G_{w(\rho^+)}),$ we get inequality of the degrees $\dg(\nabla_v\varphi(\cdot , \rho^{\pm}) ,\Theta^{\pm}),$ what is the consequence of the formulas (\ref{formula1}) and (\ref{formula2}).   
\end{Remark}
The following theorem is in the spirit of Rabinowitz's theorems regarding the problem of global bifurcation of zeros without symmetries (see \cite{[RABINOWITZ]}).
\begin{Theorem} \label{twierbif2}
Fix parameters $\rho^{\pm}\!\in\!\mathbb{R},\ \rho^{-}<\rho^{+}$ such that $\rho^\pm\notin\mathcal{BIF}$ and $BIF_{[\rho^-,\rho^+]}\neq \mathbf{0} \in U(G).$ 
Then a global bifurcation from the segment of $G$-orbits $\mathcal{F}_{[\rho^-,\rho^+]} \subset \mathcal{F}$ of solutions of (\ref{eq}) occurs, 
that is  the component $C([\rho^-,\rho^+])\subset\Omega\times\mathbb{R}$ is not compact or $(C([\rho^-,\rho^+])\backslash \mathcal{F}_{[\rho^-,\rho^+]})\cap \mathcal{F}\neq \emptyset.$
Moreover, if the component $C([\rho^-,\rho^+])$ is compact and there exist finitely many parameters 
\[\rho^-_0<\rho^+_0<\ldots<\rho^-_l=\rho^-<\rho^+=\rho^+_l<\ldots<\rho^-_n<\rho^+_n\] such that
\begin{enumerate}
\item $C([\rho^-,\rho^+])\cap\mathcal{F}\subset\bigcup_{k=0}^{n}\mathcal{F}_{[\rho^-_k,\rho^+_k]},$
\item $C([\rho^-,\rho^+])\cap\mathcal{F}_{[\rho^-_k,\rho^+_k]}\neq\emptyset$ for $k=0,\ldots,n,$
\item $\mathcal{GLOB}\cap \bigcup_{k=0}^{n}\mathcal{F}_{[\rho^-_k,\rho^+_k]}=\mathrm{cl}(C([\rho^-,\rho^+])\backslash \mathcal{F}_{[\rho^-,\rho^+]})\cap\bigcup_{k=0}^{n}\mathcal{F}_{[\rho^-_k,\rho^+_k]},$
\item $\rho^\pm_k\notin\mathcal{BIF}$ for $k=0,\ldots,n,$
\end{enumerate}
then
\begin{equation}\sum_{k=0}^{n}BIF_{[\rho^-_k,\rho^+_k]}=\mathbf{0}\in U(G).\label{sumastopni}\end{equation}
\end{Theorem}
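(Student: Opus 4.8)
The first assertion requires no new work: its hypotheses $\rho^\pm\notin\mathcal{BIF}$ and $BIF_{[\rho^-,\rho^+]}\neq\mathbf{0}$ are precisely those of Theorem \ref{twierbif1}, which already yields a global bifurcation from $\mathcal{F}_{[\rho^-,\rho^+]}$. So the plan is to establish only the identity \eqref{sumastopni}, and the idea is a conservation-of-degree principle: the compact component $C([\rho^-,\rho^+])$ exchanges equivariant degree with the trivial family $\mathcal{F}$ exactly across the segments $[\rho^-_k,\rho^+_k]$, and over a compact component these exchanges must cancel. As a bookkeeping device I would first fix, for each $\rho\notin\mathcal{BIF}$, the local index $\iota(\rho)=\dg(\nabla_v\varphi(\cdot,\rho),\Theta_\rho)\in U(G)$ computed in a small $G$-invariant tube $\Theta_\rho$ around $G(w(\rho))$ containing no other zero of $\nabla_v\varphi(\cdot,\rho)$. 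Since the bifurcation index is independent of the chosen neighbourhood, $BIF_{[\rho^-_k,\rho^+_k]}=\iota(\rho^+_k)-\iota(\rho^-_k)$, so \eqref{sumastopni} is equivalent to $\sum_{k=0}^{n}\bigl(\iota(\rho^+_k)-\iota(\rho^-_k)\bigr)=\mathbf{0}$.

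The core step is the construction, in the spirit of Step 2 of the proof of Theorem \ref{twierbif1}, of a single open, bounded, $G$-invariant set $U\subset\Omega\times\mathbb{R}$ enclosing the compact set $C([\rho^-,\rho^+])$ over the whole parameter range $[\rho^-_0,\rho^+_n]$ and slicing to a neighbourhood of the trivial orbit at each cut level. Starting from a large box $Q\supset C([\rho^-,\rho^+])$ whose slices at the levels $\rho^\pm_k$ are $\mathrm{cl}(\Theta^\pm_k)\times\{\rho^\pm_k\}$, I apply the separation theorem (Theorem \ref{separationlemma}) to $K=\mathrm{cl}(Q)\cap(\nabla_v\varphi)^{-1}(0)$ with $A=C([\rho^-,\rho^+])$ and $B=\partial Q\cap\mathrm{cl}(\{(v,\rho)\in(\Omega\times\mathbb{R})\setminus\mathcal{F}:\ \nabla_v\varphi(v,\rho)=0\})$; conditions (1) and (3) guarantee that the orbits at which $C([\rho^-,\rho^+])$ meets $\mathcal{F}$ lie inside the segments and may therefore be realised as interior points of $U$. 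Putting $U=G\bigl(Q\setminus\mathrm{cl}(K_B(\tfrac{1}{2}\eta))\bigr)$ for the disjoint separating sets $K_A,K_B$ and a small $\eta>0$, Lemmas \ref{fkoustalonymgihomeo} and \ref{lmozerach} show that $U$ is open and $G$-invariant and that its lateral boundary carries no zero of $\nabla_v\varphi$.

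With $U$ at hand I would run the conservation argument. By the generalised homotopy invariance (Theorem \ref{properties_of_degree}.(3)) the slice degree $d(\rho)=\dg(\nabla_v\varphi(\cdot,\rho),U\cap(\Omega\times\{\rho\}))$ is constant on $[\rho^-_0,\rho^+_n]$, because every orbit at which $C([\rho^-,\rho^+])$ meets $\mathcal{F}$ has been placed in the interior of $U$. Using additivity and excision (Theorems \ref{properties_of_degree}.(2) and \ref{properties_of_degree}.(4)) I isolate inside $U$ the trivial tube $\Theta_\rho$ and the remaining, purely nontrivial part carried by the branches of $C([\rho^-,\rho^+])$, writing $d(\rho)=\iota(\rho)+b(\rho)$; the constancy of $d$ then gives $\iota(\rho^+_k)-\iota(\rho^-_k)=-\bigl(b(\rho^+_k)-b(\rho^-_k)\bigr)$ for each $k$. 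On every inter-segment interval $[\rho^+_k,\rho^-_{k+1}]$ the branches stay off $\mathcal{F}$ and away from the remaining zeros, so $b$ is constant and $b(\rho^+_k)=b(\rho^-_{k+1})$; and since $C([\rho^-,\rho^+])$ does not reach the extreme levels $\rho^-_0,\rho^+_n$ (enlarging the outermost segments if necessary, which does not alter their indices), $b(\rho^-_0)=b(\rho^+_n)=\mathbf{0}$. Telescoping now yields $\sum_{k=0}^{n}\bigl(\iota(\rho^+_k)-\iota(\rho^-_k)\bigr)=-\bigl(b(\rho^+_n)-b(\rho^-_0)\bigr)=\mathbf{0}$, which is \eqref{sumastopni}.

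The main obstacle I expect is the set construction of the second paragraph rather than the final algebra. One must cut the box $Q$ so that $\partial U$ misses the zero set over the entire range while keeping every contact orbit of $C([\rho^-,\rho^+])$ with $\mathcal{F}$ interior to $U$; this is exactly where conditions (1)--(4) enter, condition (4) ensuring that no bifurcation parameter sits on a cut level and condition (3) identifying the contacts inside the segments with the global-bifurcation orbits of $C([\rho^-,\rho^+])$, so that the separation theorem is applied to genuinely disjoint closed sets. Verifying that, after the cut, the only boundary zeros are the trivial orbits $\mathcal{F}_{\rho^\pm_k}$ and that the slice degrees at the cuts reduce via excision to $\iota(\rho^\pm_k)$ is the delicate bookkeeping on which the whole telescoping rests; Corollary \ref{twbif1corollary} serves as the local model that the degree can change only across a genuinely degenerate (here global-bifurcation) orbit, and it justifies placing the cut levels off $\mathcal{BIF}$.
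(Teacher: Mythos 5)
Your overall strategy---one global $G$-invariant set $U$ produced with Theorem \ref{separationlemma}, slice degrees, additivity/excision, and a telescoping sum---is the same as the paper's, the reduction of the first claim to Theorem \ref{twierbif1} is correct, and your final algebra ($d=\iota+b$, $b$ constant between segments, $b=\mathbf{0}$ at the extremes) is exactly the paper's $a_k^{\pm}/c_k^{\pm}$ bookkeeping. The genuine gap is in the set construction. You apply Theorem \ref{separationlemma} with $A=C([\rho^-,\rho^+])$ and a box $Q$ that only encloses $C([\rho^-,\rho^+])$. But hypothesis (2) forces every segment $\mathcal{F}_{[\rho_k^-,\rho_k^+]}$ to meet $C([\rho^-,\rho^+])$, and these segments lie inside your $Q$, so inside $K=\mathrm{cl}(Q)\cap(\nabla_v\varphi)^{-1}(0)$ each segment, together with every nontrivial branch of the companion component $C([\rho_k^-,\rho_k^+])$, $k\neq l$, attached to it, is connected to your $A$. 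Nothing in your construction keeps those companion branches inside $Q$; if one of them reaches $\partial Q$, you obtain a connected subset of $K$ meeting both $A$ and $B$, and Theorem \ref{separationlemma} is not applicable at all. This is precisely why the paper enlarges $A$ to $\bigcup_{k=0}^{n}\mathcal{F}_{[\rho_k^-,\rho_k^+]}\cup C([\rho^-,\rho^+])$ and adds to $Q$ bounded neighbourhoods $W_k$ of $\mathrm{cl}\bigl(C([\rho_k^-,\rho_k^+])\setminus(\mathcal{F}_{[\rho_k^-,\rho_k^+]}\cup C([\rho^-,\rho^+]))\bigr)$: hypothesis (3) is exactly what guarantees these sets are compact and avoid $\mathcal{F}$ outside the $k$-th segment, so they can be swallowed by $Q$ without creating boundary zeros. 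Your proposal never uses hypotheses (2) and (3) in this role, and without the $W_k$ the separation step collapses.

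A second, related problem: your identity $d(\rho)=\iota(\rho)+b(\rho)$ together with ``$d$ is constant on $[\rho_0^-,\rho_n^+]$'' requires the trivial tube $\Theta_\rho$ to lie inside $U$ at every level, including the inter-segment intervals. That is untenable: condition (4) only excludes bifurcation at the cut points $\rho_k^{\pm}$, so $\mathcal{F}$ may carry bifurcating (possibly unbounded) branches at parameters strictly between the segments; these emanate from inside your $U$, can cross $\partial U$ (destroying the admissibility needed for homotopy invariance), and, being connected within $K$ to the trivial family and hence to $\mathcal{F}_{[\rho^-,\rho^+]}\subset A$, they again violate the hypothesis of Theorem \ref{separationlemma}. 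The paper avoids this by excluding $\mathcal{F}$ between the segments from $Q$ altogether (the closures of $W$ and $W_k$ are required to be disjoint from $\mathcal{F}_{\mathbb{R}\setminus\bigcup_k(\rho_k^-,\rho_k^+)}$); the price is that the slice of $U$ contains a trivial tube only at the levels $\rho_k^{\pm}$, so the slice degree is only piecewise controlled, which is exactly what the decomposition into $a_k^{\pm}+c_k^{\pm}$ and the identities $a_k^+=a_{k+1}^-$, $c_0^-=c_n^+=\mathbf{0}$ are designed to handle. In short, your telescoping is sound, but the object to which you apply it cannot be built the way you describe.
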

\begin{proof}
According to Theorem \ref{twierbif1}, we only need to show that the formula (\ref{sumastopni}) is valid. To prove this, fix an open and bounded $\varepsilon$-neighbourhood $(\mathrm{cl}(C([\rho^{-},\rho^{+}])\backslash \mathcal{F}_{[\rho^{-},\rho^{+}]}))(\varepsilon),$ denoted by $W$ for short, of
the set $\mathrm{cl}(C([\rho^{-},\rho^{+}])\backslash \mathcal{F}_{[\rho^{-},\rho^{+}]})$ such that
$\mathrm{cl}(W)\cap\mathcal{F}_{\mathbb{R}\backslash\bigcup_{k=0}^{n}(\rho^{-}_k,\rho^{+}_k)}=\emptyset$
for some (small enough) $\varepsilon >0.$ 
By assumption $(3),$ for any $k\neq l$ we can choose an open, bounded $\tilde{\varepsilon}$-neighbourhood 
$(\mathrm{cl}(C([\rho^{-}_k,\rho^{+}_k])\backslash (\mathcal{F}_{[\rho^{-}_k,\rho^{+}_k]}\cup C([\rho^{-},\rho^{+}]))))(\tilde{\varepsilon}),$ denoted by $W_k$ for short, of $\mathrm{cl}(C([\rho^{-}_k,\rho^{+}_k])\backslash (\mathcal{F}_{[\rho^{-}_k,\rho^{+}_k]}\cup C([\rho^{-},\rho^{+}])))$ such that 
$\mathrm{cl}(W_k)\cap\mathcal{F}_{\mathbb{R}\backslash(\rho^{-}_k,\rho^{+}_k)}=\emptyset$ for some (small enough) $\tilde{\varepsilon} >0.$ 
For any $k\in\{0,\ldots ,n\}$ set an open and bounded set $Q_k\subset\Omega\times\mathbb{R}$ such that
$\mathcal{F}_{(\rho^{-}_k,\rho^{+}_k)}\subset Q_k$ and 
$(\Omega\times((-\infty,\rho^-_k)\cup(\rho^+_k,+\infty)))\cap Q_k=\emptyset.$ 
Additionally, let 
$(\Omega\times\{\rho^{\pm}_k\})\cap\partial Q_k=\mathrm{cl}(\Theta^\pm_k)\times\{\rho^\pm_k\},$
where $\Theta^{\pm}_k\subset \Omega$ are an open, bounded, $G$-invariant neighbourhoods of the $G$-orbits
$G(w(\rho^{\frac{+}{}}_k))$ 
such that $(\nabla_v\varphi)^{-1}(0)\cap(\mathrm{cl}(\Theta^{\pm}_k)\times\{\rho^{\pm}_k\})=\mathcal{F}_{\rho^{\pm}_k}$ and
$(\Theta^{\pm}_k\times\{\rho^{\pm}_k\})\cap W=\emptyset.$
For $k\neq l$ let $\Theta_k^\pm$ satisfy an additional condition  
$(\Theta_k^\pm\times \{\rho_k^\pm\})\cap W_k=\emptyset.$  
Now for $k\neq l$ define $\tilde{Q}_k=Q_k\cup W_k$ and for $k=l$ let $\tilde{Q}_l=Q_l.$ 
Next, define 
$Q= \bigcup_{k=0}^{n} \tilde{Q}_k\cup W$ and observe that $\mathrm{cl}(Q)\cap (\nabla _v \varphi )^{-1}(0)$ is a compact set. Put 
\begin{align*}
 K & =\mathrm{cl}(Q)\cap (\nabla _v \varphi )^{-1}(0),\\
 A & =\bigcup_{k=0}^{n}\mathcal{F}_{[\rho^{-}_k,\rho^{+}_k]}\cup C([\rho^-,\rho^+])\\
 B & =\partial Q\cap \mathrm{cl}(\{(v,\rho)\in (\Omega\times\mathbb{R})\backslash\mathcal{F} :\ \nabla _v \varphi (v,\rho )=0\}).
\end{align*} 
Sets so defined satisfy the assumptions of Theorem \ref{separationlemma}, hence we obtain sets $K_A$ and $K_B$ from the conclusion of this theorem. Since these sets are
compact and disjoint, there exists $\eta >0$ such that $\eta$-neighbourhoods $K_A(\eta),\ K_B(\eta)$ of the sets $K_A, \ K_B$ are disjoint. 
Now put 
$U=G(Q\backslash \mathrm{cl}(K_B(\frac{1}{2}\eta)))$
and observe that $(\nabla_v \varphi )^{-1}(0)\cap\partial U= \bigcup_{k=0}^{n}\mathcal{F}_{\rho^-_k}\cup\mathcal{F}_{\rho^+_k}$ by Lemma
\ref{lmozerach}. The set $U$ is $G$-invariant and open by Lemma \ref{fkoustalonymgihomeo}. Using the generalised homotopy invariance property of the degree (Theorem \ref{properties_of_degree}.(3)) we obtain that \[\dg(\nabla_v\varphi(\cdot , \rho),U\cap(\Omega\times\{\rho\}))\in U(G)\] is well defined and is constant as a function of the 
$\rho\in [\rho_{min}+\varepsilon_0,\rho_{max}-\varepsilon_0],$ where
\begin{align*}
\rho_{min}&=inf\{\rho\in\mathbb{R}: U\cap(\Omega\times\{\rho\})\neq\emptyset\},\\ \rho_{max}&=sup\{\rho\in\mathbb{R}:U\cap(\Omega\times\{\rho\})\neq\emptyset\} 
\end{align*}
and $\varepsilon_0$ is
a positive number satisfying 
$(\nabla_v\varphi)^{-1}(0)\cap (U\cap(\Omega\times\{\rho_{\substack{min \\ max}}\pm\varepsilon_0\}))=\emptyset$
in the case of $\rho_{\text{min}}< \rho^-_0$ and $\rho_{\text{max}}>\rho_n^+.$ In other cases, $\rho\in[\rho^-_0,\rho_{max}-\varepsilon_0]$ or $\rho\in[\rho_{min}+\varepsilon_0, \rho^+_n]$ or $\rho\in[\rho^-_0,\rho^+_n].$ 
Observe that, using the additivity property of the degree (Theorem \ref{properties_of_degree}.(2)), we have 
\begin{multline*}\dg(\nabla_v\varphi(\cdot , \rho^+_k),U\cap(\Omega\times\{\rho^+_k\}))=\\ \underbrace{\dg(\nabla_v\varphi(\cdot , \rho^+_k) ,\Theta^+_k)}_{a^+_k}+ \underbrace{\dg(\nabla_v\varphi(\cdot , \rho^+_k),(U\cap(\Omega\times\{\rho^+_k\}))\backslash\mathrm{cl}(\Theta^+_k))}_{c^+_k},
\end{multline*}
analogously for parameters $\rho^-_k,k=0,\dots,n.$ We assume that $\dg(\nabla_v\varphi(\cdot , \rho),\emptyset)=\mathbf{0}\in U(G).$
Since $a_k^++c_k^+=a_{k+1}^-+c_{k+1}^-,$ we have $a_k^+=a_{k+1}^-,$ hence we conclude that
\begin{equation*}
\sum_{k=0}^{n}a^+_k-a^-_k =\sum_{k=0}^{n}a^+_k-\sum_{k=0}^{n}a^-_k=\sum_{k=0}^{n}a^+_k-\sum_{k=-1}^{n-1}a^-_{k+1}=\sum_{k=0}^{n-1}(a^+_k-a^-_{k+1})+a^+_n -a^-_0=a^+_n-a^-_0.
\end{equation*}
Observe also that $c^+_n=c^-_0=0.$ Then
\begin{multline*}
\sum_{k=0}^{n} BIF_{[\rho^-_k,\rho^+_k]}=
\sum_{k=0}^{n}\dg(\nabla_v\varphi(\cdot , \rho^+_k),\Theta^+_k)-\dg(\nabla_v\varphi(\cdot , \rho^-_k),\Theta^-_k)=\\
=\sum_{k=0}^{n}a_k^+-a_k^-=a^+_n-a^-_0=(a_n^++c_n^+)-(a_0^-+c_0^-)=\mathbf{0}\in U(G),
\end{multline*}
which proves our assertion.
\end{proof}
\begin{Remark}
Observe that in Theorem \ref{twierbif2} if for $k=0,\ldots,n$ the set $\mathcal{F}_{[\rho^-_k,\rho^+_k]}$ contains exactly one degenerate critical orbit, then the assumption $(3)$ of this theorem is satisfied.
\end{Remark}
Now fix $(v_0,\rho_0)\in(\nabla_v\varphi)^{-1}(0)$ and let 
\[\mathbb{V}^+=\bigoplus_{\lambda\in \sigma^{+}(-\nabla_v^2\varphi(v_0,\rho_0))}\mathbb{V}_{-\nabla^2_v\varphi(v_0,\rho_0)}(\lambda),\]
where $\mathbb{V}_{-\nabla^2_v\varphi(v_0,\rho_0)}(\lambda)$ denotes the eigenspace of $-\nabla^2\varphi(v_0,\rho_0)$ associated to $\lambda.$ 
Notice that a non-degenerate critical $G$-orbit $G(v_0)\subset\Omega$ of the map $\varphi(\cdot,\rho_0)$ is an isolated $\eta_{\rho_0}$-invariant set, where $\eta_{\rho_0}:U\subset\Omega\times\mathbb{R}\rightarrow\mathbb{V}$ is a local $G$-flow induced by the equation 
\[\dot{v}(t)=-\nabla_v\varphi(v(t),\rho_0).\]
For simplicity of notation, $G_{v_0}=H.$ According to Lemma \ref{hesjanpostac}, there is a special form of $\nabla_v^2\varphi(v_0,\rho_0)$ given by the formula (\ref{hesjanpostacwzor}), where \[\mathbb{V}=T_{v_0}G(v_0)\oplus\mathbb{W}^H\oplus(\mathbb{W}^H)^{\bot}=T_{v_0}G(v_0)\oplus\mathbb{V}^+\oplus(\mathbb{V}^+)^{\bot}.\]
Assume that the non-degenerate critical $G$-orbit $G(v_0)$ is special, that is $m^-(C(v_0))=0.$ 
Then the $G$-equivariant Conley index of the critical $G$-orbit $G(v_0)$ has the following $G$-homotopy type:
\[CI_G(G(v_0),\eta_{\rho_0})=([(G\times_H D(\mathbb{V}^+))/(G\times_H S(\mathbb{V}^+))]_G ,[*]),\]
where $D(\mathbb{V}^+),\ S(\mathbb{V}^+)$ denote a closed ball and a sphere in the space $\mathbb{V}^+,$ respectively.
Observe that since $m^-(C(v_0))=0,$ we have $S(\mathbb{V}^+)\subset D(\mathbb{V}^+)\subset\mathbb{V}^+ \subset\mathbb{W}^H$ and $m^-(B(v_0))=\dim\mathbb{V}^+,$ hence
\begin{equation}
CI_G(G(v_0),\eta_{\rho_0})=([(G/H\times D(\mathbb{V}^+))/(G/H\times S(\mathbb{V}^+))]_G ,[*]).
\end{equation}
\begin{Remark}\label{G-CW-complex}
 Additionally, there is a connection between the $G$-equivariant Conley index of the special critical $G$-orbit $G(v_0)$ and a notion of finite, pointed $G$-$CW$-complex. The $G$-equivariant Conley index $CI_G(G(v_0),\eta_{\rho_0})$ has the $G$-homotopy type of a pointed $G$-$CW$-complex which consists of the base point $*$ and one $m^-(B(v_0))$-dimensional cell of orbit type $(H).$
\end{Remark}
\begin{Remark}\label{AchangeofG-CW-complex}
 Thus, if Morse indices of two special critical $G$-orbits are different, then the $G$-equivariant Conley indices have different $G$-homotopy types. It is known (see \cite{[SMOLLER]}) that a change in the $G$-equivariant Conley index implies the existence of $G$-orbits of local bifurcation of critical points of $G$-equivariant variational problems.
\end{Remark} 
 As a consequence of Remark \ref{G-CW-complex} and Remark \ref{AchangeofG-CW-complex}, there are simple conditions verifying the existence of a local bifurcation.  
\begin{Theorem}\label{twbiflok1}
Fix parameters $\rho^{\pm}\!\in\!\mathbb{R},\ \rho^{-}<\rho^{+}$ such that:
\begin{enumerate}
\item[(1)] $\dim \ker \nabla_v ^2\varphi (w(\rho ^{\pm}),\rho ^{\pm}) =  \dim G(w(\rho^{\pm})),$
\item[(2)] $(G_{w(\rho^-)})=(G_{w(\rho^+)}),$
\item[(3)] $m^-(C(w(\rho^{\pm})))=0.$
\end{enumerate}
If $m^{-}(B(w(\rho^-)))\neq m^{-}(B(w(\rho^+))),$ then $(\rho^-,\rho^+)\cap\mathcal{BIF}\neq\emptyset,$ that is there exists a local bifurcation parameter $\rho_0\in(\rho^-\!,\rho^+).$
\end{Theorem}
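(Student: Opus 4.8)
The plan is to establish the bifurcation through the $G$-equivariant Conley index rather than through the degree for $G$-equivariant gradient maps. The reason is that the degree records only the parity of $m^-(B)$: by formula \eqref{formula1} the special orbits at the endpoints contribute $(-1)^{m^-(B(w(\rho^\pm)))}\chi_G(G/H^+)$, so $BIF_{[\rho^-,\rho^+]}$ vanishes precisely when $m^-(B(w(\rho^-)))$ and $m^-(B(w(\rho^+)))$ have equal parity. Since the hypothesis only says these Morse indices are \emph{different}, Theorem \ref{twierbif1} is powerless when they differ by an even number, and a finer invariant is required.

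I would begin by noting that assumptions $(1)$ and $(3)$ assert exactly that the critical $G$-orbits $G(w(\rho^\pm))$ are non-degenerate and special for $\varphi(\cdot,\rho^\pm)$. Each is therefore an isolated invariant set of the local $G$-flow $\eta_{\rho^\pm}$ generated by $\dot v=-\nabla_v\varphi(v,\rho^\pm)$, so its $G$-equivariant Conley index is defined. By Remark \ref{G-CW-complex}, and using assumption $(2)$ that the two isotropy types coincide with a common $(H)$, the index $CI_G(G(w(\rho^\pm)),\eta_{\rho^\pm})$ has the $G$-homotopy type of a pointed $G$-$CW$-complex consisting of the base point and a single cell of orbit type $(H)$ and dimension $m^-(B(w(\rho^\pm)))$. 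Because $m^-(B(w(\rho^-)))\neq m^-(B(w(\rho^+)))$, these two cells have distinct dimensions, so by Remark \ref{AchangeofG-CW-complex} the two Conley indices are of different $G$-homotopy type.

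The concluding step, which I expect to be the main obstacle, is to convert this change of index into a bifurcation on $(\rho^-,\rho^+)$; the mechanism is the continuation invariance of the $G$-equivariant Conley index, in the form established in \cite{[SMOLLER]}. I would argue by contradiction: suppose $(\rho^-,\rho^+)\cap\mathcal{BIF}=\emptyset$. Assumption $(1)$ combined with Theorem \ref{wkkoniecznybif} already gives $\rho^\pm\notin\mathcal{BIF}$, so in fact no $G$-orbit of local bifurcation occurs on the whole closed segment $[\rho^-,\rho^+]$. Then no nontrivial zeros of $\nabla_v\varphi(\cdot,\rho)$ accumulate on $\mathcal{F}_{[\rho^-,\rho^+]}$, and using the continuity of $w$ and of $\nabla_v\varphi$ in $\rho$ one can exhibit a single $G$-invariant set that isolates $G(w(\rho))$ for every $\rho\in[\rho^-,\rho^+]$ simultaneously. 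This makes $\{G(w(\rho))\}_{\rho\in[\rho^-,\rho^+]}$ a continuous family of isolated invariant sets, so continuation invariance forces $CI_G(G(w(\rho^-)),\eta_{\rho^-})$ and $CI_G(G(w(\rho^+)),\eta_{\rho^+})$ to share the same $G$-homotopy type, contradicting the dimension mismatch above. Hence some $\rho_0\in(\rho^-,\rho^+)$ lies in $\mathcal{BIF}$. The delicate point is precisely the construction of the uniform isolating neighbourhood from the absence of bifurcation, which is the content I would import from \cite{[SMOLLER]}.
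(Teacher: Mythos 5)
Your proposal is correct and follows essentially the same route as the paper: both proofs invoke Remark \ref{G-CW-complex} to identify the $G$-equivariant Conley indices of the special orbits $G(w(\rho^\pm))$ as pointed $G$-$CW$-complexes with a single cell of dimension $m^-(B(w(\rho^\pm)))$ and orbit type $(H)$, and then apply Remark \ref{AchangeofG-CW-complex} (i.e.\ the result of \cite{[SMOLLER]}) to conclude that the resulting change of index forces a local bifurcation in $(\rho^-,\rho^+)$. Your elaboration of the continuation-invariance mechanism behind the Smoller result, and your opening observation that the degree only sees the parity of $m^-(B)$, are accurate but simply unpack what the paper compresses into its citations.
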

\begin{proof}
First observe that, according to Remark \ref{G-CW-complex}, the $G$-equivariant Conley indices of the special critical $G$-orbits $G(w(\rho^{\pm}))$ have the $G$-homotopy type of a pointed $G$-$CW$-complex which consists of the base point $*$ and one $m^-(B(w(\rho^\pm)))$-dimensional cell of orbit type $(G_{w(\rho^-)})=(G_{w(\rho^+)}).$ Since  $m^{-}(B(w(\rho^-)))\neq m^{-}(B(w(\rho^+))),$ we conclude, by Remark \ref{AchangeofG-CW-complex}, that $(\rho^-,\rho^+)\cap\mathcal{BIF}\neq\emptyset.$    
\end{proof}
\begin{Theorem}\label{twbiflok2}
Fix parameters $\rho^{\pm}\!\in\!\mathbb{R},\ \rho^{-}<\rho^{+}$ such that:
\begin{enumerate}
\item[(1)] $\dim \ker \nabla_v ^2\varphi (w(\rho ^{\pm}),\rho ^{\pm}) =  \dim G(w(\rho^{\pm})),$
\item[(2)] $(G_{w(\rho^-)})\neq(G_{w(\rho^+)}),$
\item[(3)] $m^-(C(w(\rho^{\pm})))=0.$
\end{enumerate}
Then $(\rho^-,\rho^+)\cap\mathcal{BIF}\neq\emptyset,$ that is there exists a local bifurcation parameter $\rho_0\in(\rho^-\!,\rho^+).$
\end{Theorem}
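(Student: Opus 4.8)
The plan is to follow the strategy of the proof of Theorem \ref{twbiflok1} almost verbatim, passing through the $G$-equivariant Conley index of each endpoint orbit and its description as a pointed $G$-$CW$-complex carrying a single cell. The one genuine difference is that here the two endpoints will be told apart by their \emph{isotropy} data rather than by their Morse indices, so any hypothesis on the dimensions of the cells is unnecessary.

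First I would note that assumptions $(1)$ and $(3)$ say exactly that both $G$-orbits $G(w(\rho^{\pm}))$ are non-degenerate and special critical orbits of $\varphi(\cdot,\rho^{\pm})$. In particular each is an isolated invariant set of the gradient flow $\dot v=-\nabla_v\varphi(v,\rho^{\pm})$, so its $G$-equivariant Conley index $CI_G(G(w(\rho^{\pm})),\eta_{\rho^{\pm}})$ is defined, and by Remark \ref{G-CW-complex} it has the $G$-homotopy type of a pointed $G$-$CW$-complex consisting of the base point together with a single cell of dimension $m^-(B(w(\rho^{\pm})))$ and of orbit type $(G_{w(\rho^{\pm})})$.

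The heart of the proof is the claim that these two indices have different $G$-homotopy types, and this is where assumption $(2)$, $(G_{w(\rho^-)})\neq(G_{w(\rho^+)})$, is used. The mechanism is that a $G$-homotopy equivalence restricts, for every closed subgroup $K\in\overline{sub}(G)$, to an ordinary homotopy equivalence of the $K$-fixed-point sets, and the $K$-fixed points of the single cell are nonempty precisely when $K$ is subconjugate to its isotropy group; thus the collection of orbit types actually occurring in the complex is a $G$-homotopy invariant. Since the two conjugacy classes are distinct, antisymmetry of the subconjugacy order guarantees that at least one of them, say $(G_{w(\rho^-)})$, is not subconjugate to the other; choosing $K$ to be a representative of that class, the $K$-fixed-point set has nontrivial reduced homology for the $\rho^-$ index (as $K$ is subconjugate to its own class) while reducing to the base point for the $\rho^+$ index, so the two cannot be homotopy equivalent. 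Hence no $G$-homotopy equivalence between the indices exists; note that, unlike in Theorem \ref{twbiflok1}, the cell dimensions are irrelevant here. Having shown that the $G$-equivariant Conley index changes as $\rho$ moves from $\rho^-$ to $\rho^+$, I would finish by invoking Remark \ref{AchangeofG-CW-complex}, which guarantees that such a change produces a $G$-orbit of local bifurcation in the open segment, that is $(\rho^-,\rho^+)\cap\mathcal{BIF}\neq\emptyset$. The only nonroutine point is the fixed-point argument of this paragraph; the remainder is a direct transcription of the proof of Theorem \ref{twbiflok1}.
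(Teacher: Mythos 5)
Your proof is correct and follows essentially the same route as the paper: both identify the $G$-equivariant Conley indices of the two special orbits as pointed $G$-$CW$-complexes with a single cell (Remark \ref{G-CW-complex}) and then conclude from the change of index via Remark \ref{AchangeofG-CW-complex} that $(\rho^-,\rho^+)\cap\mathcal{BIF}\neq\emptyset$. The only difference is that you spell out, via $K$-fixed-point sets and the antisymmetry of the subconjugacy order, why cells of distinct orbit types force non-$G$-homotopy-equivalent indices --- a point the paper leaves implicit by simply citing Remark \ref{AchangeofG-CW-complex} ``once again,'' even though that remark is phrased for differing Morse indices; your argument rigorously fills in that step.
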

\begin{proof}
Analogously to the proof of Theorem \ref{twbiflok1}, the $G$-equivariant Conley indices of the special critical $G$-orbits $G(w(\rho^{\pm}))$ have the $G$-homotopy type of a pointed $G$-$CW$-complex which consists of the base point $*$ and one $m^-(B(w(\rho^\pm)))$-dimensional cell of orbit type $(G_{w(\rho^\pm)}),$ see Remark \ref{G-CW-complex}. Since $(G_{w(\rho^-)})\neq (G_{w(\rho^+)}),$ applying Remark \ref{AchangeofG-CW-complex} once again, we get that $(\rho^-,\rho^+)\cap\mathcal{BIF}\neq\emptyset.$     
\end{proof}
\numsec
\section{Applications}
\label{applications} 
In this section we apply our results to show the existence of new families of central configurations, which bifurcate from certain known families of central configurations. We consider $N$ bodies of positive masses $m_1,\ldots ,\ m_N$ in the $2$-dimensional Euclidean vector space, whose positions are denoted by $q_1,\ldots ,q_N\in\mathbb{R}^2$. 
The equations of motion are given by
\[m_j\ddot{q}_j=-\sum_{i=1,i\neq j}^N\mathbf{G}m_jm_i\frac{q_j-q_i}{|q_j-q_i|^3},\ j=1,\ldots, N,\]
where $\mathbf{G}$ is the gravitational constant, which can be set equal to one. 
From now on we will treat the space $\mathbb{R}^{2N}$ as an $SO(2)$-representation $\mathbb{V},$ that is a representation of the Lie group $SO(2)$, which is a direct sum of $N$
copies of the natural, orthogonal $SO(2)$-representation (that is the usual multiplication of a vector by a matrix). The action of $SO(2)$ on $\mathbb{V}$ is given by
\[
SO(2)\times\mathbb{V} \ni (g,q)=(g,(q_1,\ldots ,q_N))\mapsto g\cdot q =(g\cdot q_1,\ldots ,g\cdot q_N)\in\mathbb{V}.
\]
Newtonian equations of motion associated with a potential $U$ have the following form:
\begin{equation} \label{eqr} M\ddot{q}=\nabla_q U(q,m),
\end{equation}
where $M$ is a mass matrix, that is $M=diag (m_1,m_1, \ldots , m_N, m_N),$ and the Newtonian potential $U : \Omega\times(0,+\infty)^N\rightarrow\mathbb{R}$ is given by
\[U(q,m)=U(q_1,\ldots ,q_N,m_1,\ldots ,m_N)=\sum\limits_{1 \leq i<j\leq N}\frac{m_im_j}{|q_i-q_j|},\]
where the set $\Omega\subset\mathbb{V}$ is defined by
\[\Omega=\{q\!=\!(q_1,\ldots ,q_N)\in\mathbb{V} : q_i\neq q_j \ for \ i\neq j\}.\]
We can assume without loss of generality that the centre of mass of the bodies is at the origin of the coordinate chart. Observe that the set
$\Omega\subset \mathbb{V}$ is $SO(2)$-invariant and open, while the potential $U$ is an $SO(2)$-invariant $C^\infty$-map.
\begin{Definition}
A configuration  $q=(q_1,\ldots,q_N)\in\Omega$ is called a central configuration of the system
(\ref{eqr}) if there exists a positive constant $\lambda$ such that $\ddot{q}=-\lambda q.$ 
\end{Definition}
This is equivalent to saying that the following condition is fulfilled:
\begin{equation}\label{eqeq}
-\lambda\nabla_q I(q,m)=\nabla _qU(q,m),
\end{equation} 
where $I:\Omega\times(0,+\infty)^N\rightarrow\mathbb{R}$ given by the formula 
\[
I(q,m)=\frac{1}{2}\sum\limits_{j=1}^{N}m_j|q_j|^2
\]
is the moment of inertia.
Using Euler's theorem and the fact that $U$ is homogeneous of degree $-1$ one can show that $\lambda=\frac{U(q,m)}{2I(q,m)}.$ Moreover,
for any central configuration $q$ it is known that $rq$ and
$g\cdot q$ are central configurations with new $\lambda$ coefficients equal to $\frac{\lambda}{|r|^3}$
and $\lambda$ respectively, where $r\in\mathbb{R}$ and $ g\in SO(2).$ 
Thus this way we can introduce an equivalence relation by saying that two central configurations are equivalent if
we can pass from one to another by a composition of a rotation and a scaling.
However, instead of passing to the quotient and treating classes as single points we shall study the whole $SO(2)$-orbits of central configurations. In this approach our problem becomes a problem of finding critical $SO(2)$-orbits of a smooth, $SO(2)$-invariant potential 
$\hat{\varphi} : \Omega\times (0,+\infty)^N\rightarrow\mathbb{R}$ given by 
\[
 \hat{\varphi}(q,m)=U(q,m)+\lambda I(q,m).
\]
As we mentioned earlier we search bifurcations of central configurations from known families, so we assume additionally that there exist continuous maps $w:(0,+\infty)\rightarrow\Omega$ and $m:(0,+\infty)\rightarrow(0,+\infty)^N$ such that
\[\nabla_q\hat{\varphi}(w(\rho),m(\rho))=0.\]
Now define a potential $\varphi:\Omega\times(0,+\infty)\rightarrow\mathbb{R}$ by 
\[\varphi(q,\rho)=\hat{\varphi}(q,m(\rho)).\]
Then
\[\mathcal{F}=\bigcup_{\rho\in(0,+\infty)}SO(2)(w(\rho ))\times \{\rho \}\subset (\nabla _q \varphi )^{-1}(0).\]
We shall refer to $\mathcal{F}$ as the \emph{trivial family of solutions}.
Thus, we will consider the following equation:
\begin{equation} \label{eqcc}
\nabla_{q}\varphi(q,\rho)=0,
\end{equation}
where $\lambda=\lambda(\rho)=\frac{U(w(\rho),m(\rho))}{2I(w(\rho),m(\rho))}.$
Now we will apply our main Theorems \ref{wkkoniecznybif} and \ref{twierbif1} to formulate necessary and sufficient conditions for the existence of bifurcations of central configurations from our known families $w$. Note that for families $w$ considered in this paper we have $SO(2)_{w(\rho)}=Id$ thus the matrix $C(w(\rho))$ is zero dimensional and in particular, its Morse index is equal to zero.
Now, as has been indicated earlier, we pass from seeking bifurcations of central configurations to seeking bifurcations of critical $SO(2)$-orbits of the $SO(2)$-invariant potential 
\[\varphi(q,\rho)\!=\!U(q,m(\rho))+ \lambda(\rho) I(q,m(\rho)).\]
For this we seek parameters $\rho$ satisfying the necessary condition for the existence of a local bifurcation, which is given by Theorem \ref{wkkoniecznybif}.
Because of tedious computations we aid ourselves by the algebraic processor MAPLE$\texttrademark$ to calculate the Hessian $\nabla^2_q\varphi$ of the potential $\varphi$ and its characteristic polynomial $W_\rho$ along the curve $w.$ Denote the symbolic form of the latter by 
\[W_\rho(q)=q^{2N}-a_1(\rho)q^{2N-1}+\cdots + (-1)^{2N-1}a_{2N-1}(\rho)q+(-1)^{2N} a_{2N}(\rho).\]
Notice that $\dim\ker \nabla^2_q \varphi(w(\rho),\rho)=k$ if and only if 
$a_{2N}=\cdots=a_{2N-k+1}=0$ and $a_{2N-k}\neq 0.$ Additionally in this case $a_{2N-k}$ is the product of the nonzero eigenvalues. Also recall that 
$\dim \ker \nabla_q^2\varphi (w(\rho),\rho)\geq \dim SO(2)(w(\rho))=1,$ so $a_{2N}(\rho)=0.$ For this reason we consider the equation 
\[a_{2N-1}(\rho)=0,\]
whose solutions are the only parameters satisfying the necessary condition from Theorem \ref{wkkoniecznybif}.
In the next step we apply Theorems \ref{twierbif1}, \ref{twchangedegree} and \ref{twbiflok1} to each candidate for the parameter of bifurcation to separate the ones which also satisfy the sufficient conditions.
For each candidate $\rho^*$ we choose its small enough $\varepsilon$-neighbourhood for which we have $\dim \ker \nabla_q ^2\varphi (w(\rho^*\pm\varepsilon),\rho^*\pm\varepsilon) =  \dim SO(2)(w(\rho^*\pm\varepsilon)).$ If
\[ m^{-}(B(w(\rho^*-\varepsilon)))\neq m^{-}(B(w(\rho^*+\varepsilon))),\] then there exists a parameter of local bifurcation in this neighbourhood. Moreover, if these Morse indices differ by an odd number a global bifurcation occurs.
\begin{figure}[h!]
 \centering
  \includegraphics[height=0.4\textwidth]{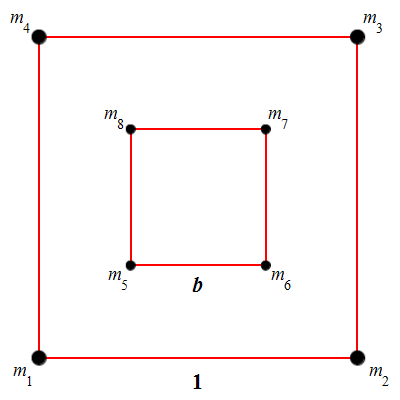}
 \caption{Eight bodies located at the vertices of two nested squares.}\label{two_squares}
 \end{figure}
 
We will now focus on our goal, which is studying bifurcations from the known families of central configurations,  which we will now consider as our trivial family of solutions $w.$ First, we consider nested planar central configuration for the problem of $8$ bodies.
This configuration was found in $2013$ by A. C. Fernandes, L. F. Mello and M. M. da Silva in \cite{[FERNANDES]} and consists of $4$ bodies with equal masses located at the vertices of a square whose side is equal to $1$ and the other $4$ bodies, also with equal masses, located at the vertices of a smaller square whose side is equal to $0<b<0.53177...,$ whose centre coincides with the centre of the first square (see Figure \ref{two_squares}). Denote by $r$ the ratio of the circumcircle of the inner square. We treat $0<r<r_0=0.37602...$ as a parameter. Let $m_1=m_2=m_3=m_4=M_r=-\frac{B(r)}{A(r)}$ and $m_5=m_6=m_7=m_8=1,$ 
where 
\begin{align*}
A(r)&=(R_{1,2}-R_{1,5})\Delta_{1,5,2}+(R_{1,3}-R_{1,6})\Delta_{1,6,3}+(R_{1,7}-R_{1,2})\Delta_{1,7,2},\\
B(r)&=(R_{6,7}-R_{1,5})\Delta_{1,5,6}+(R_{1,7}-R_{6,7})\Delta_{5,6,3}+(R_{1,6}-R_{5,7})\Delta_{1,6,8}
\end{align*}
and $R_{i,j}=1/|q_i-q_j|^3,\ \Delta_{i,j,k}=(q_i-q_j)\wedge (q_i-q_k)$ for $1\leq i,j,k\leq 8.$ 
We define $w:(0,r_0)\rightarrow\Omega$ by
\begin{align}\label{family_two_squares}\displaystyle
w(r)\!\!&=\!\!\left(\!\!-\frac{1}{2},-\frac{1}{2},\frac{1}{2},-\frac{1}{2},\frac{1}{2},\frac{1}{2},-\frac{1}{2},\frac{1}{2}, -\frac{\sqrt{2}}{2}r,-\frac{\sqrt{2}}{2}r, \frac{\sqrt{2}}{2}r,-\frac{\sqrt{2}}{2}r, \frac{\sqrt{2}}{2}r,\frac{\sqrt{2}}{2}r, -\frac{\sqrt{2}}{2}r,\frac{\sqrt{2}}{2}r\!\!\right)\!\!\end{align} 
and $m:(0,r_0)\rightarrow(0,+\infty)^8$ as follows 
\[m(r)=(M_r,M_r,M_r,M_r,1,1,1,1).\] 
\begin{Lemma}\label{lemma1}
Put $ r_1=\frac{1}{7}\sqrt{2}, \ r_2=\frac{1}{6}\sqrt{2}$ and $r_3=\frac{1}{5}\sqrt{2}.$ 
Then $\dim \ker \nabla_q ^2\varphi (w(r_i),r_i) =  \dim SO(2)(w(r_i))=1$ for $i=1,2,3$ and 
the Morse index of $\nabla^2_{q}\varphi(w(\cdot),\cdot)$ evaluated at $r_i$ is 
\begin{align*}
m^- (\nabla^2_{q}\varphi (w(r_i),r_i))=\left\{ \begin{array}{lrl}
1, & for & i=1 \\
3, & for & i=2\\
4, & for & i=3
\end{array}\right. .\end{align*} 
\end{Lemma}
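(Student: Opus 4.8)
The plan is to reduce the statement to an explicit \emph{exact} linear-algebra computation at the three special parameters. By the remark preceding the lemma we have $SO(2)_{w(r)}=\{e\}$, so in the notation of Lemma \ref{hesjanpostac} the matrix $C(w(r))$ is $0$-dimensional and the Hessian $\nabla_q^2\varphi(w(r),r)$ splits as the orthogonal direct sum of the zero block on $T_{w(r)}SO(2)(w(r))$ and the block $B(w(r))$ on its orthogonal complement. Consequently $\dim\ker\nabla_q^2\varphi(w(r),r)=1$ is equivalent to $B(w(r))$ being nonsingular, and $m^-(\nabla_q^2\varphi(w(r),r))=m^-(B(w(r)))$, because the single guaranteed zero eigenvalue (the orbit direction, spanned by $\Lambda w(r)$, where $\Lambda$ is the block-diagonal infinitesimal generator of the $SO(2)$-action, i.e. eight copies of the $2\times2$ rotation generator) contributes neither a negative nor an additional zero eigenvalue. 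Thus it suffices to determine the spectrum of the $16\times16$ symmetric matrix $\nabla_q^2\varphi(w(r_i),r_i)$ for $i=1,2,3$.

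The point that makes an exact computation feasible is the choice $r_i=\sqrt{2}/k_i$ with $k_i\in\{7,6,5\}$: substituting into \eqref{family_two_squares}, the inner-square vertices become $(\pm 1/k_i,\pm 1/k_i)$ while the outer ones are $(\pm\tfrac12,\pm\tfrac12)$, so \emph{all} coordinates of $w(r_i)$ are rational. Hence every squared mutual distance $|q_a-q_b|^2$ is rational, each mass $M_{r_i}=-B(r_i)/A(r_i)$ and every entry of $\nabla_q^2\varphi$ lies in the real algebraic number field generated by the finitely many $|q_a-q_b|^{-3}$, and the whole computation can be performed symbolically (in MAPLE) with no floating-point error. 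I would assemble the Hessian and its characteristic polynomial $W_{r_i}(q)=q^{16}-a_1(r_i)q^{15}+\cdots+a_{16}(r_i)$ exactly, and first verify $a_{16}(r_i)=0$ together with $a_{15}(r_i)\neq 0$; by the discussion following \eqref{eqcc} this certifies $\dim\ker\nabla_q^2\varphi(w(r_i),r_i)=1$ and simultaneously confirms that the unique zero eigenvalue is exactly the orbit direction $\Lambda w(r_i)$.

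To read off the Morse index I would use that $\nabla_q^2\varphi(w(r_i),r_i)$ is symmetric, so $W_{r_i}$ has only real roots. Dividing out the simple factor $q$, the degree-$15$ polynomial $W_{r_i}(q)/q$ has nonzero constant term and all real, nonzero roots; for such a polynomial Descartes' rule of signs is exact, so the number of negative eigenvalues equals the number of sign changes in the coefficient sequence of $W_{r_i}(-q)/(-q)$. Evaluating this sign pattern for $k_i=7,6,5$ yields the counts $1,3,4$ respectively, i.e. $m^-(B(w(r_i)))=1,3,4$, which is the assertion. Equivalently, one may compute the inertia of the symmetric matrix directly by an exact $LDL^{\top}$ congruence reduction and count the negative pivots; either route produces a rigorous certificate.

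The main obstacle is computational rather than conceptual: producing and handling the $16\times16$ symbolic Hessian and its characteristic polynomial \emph{exactly} over the algebraic number field generated by the distances, so that the vanishing of $a_{16}$, the nonvanishing of $a_{15}$, and the coefficient sign pattern are certified identities rather than numerical approximations. Once $a_{15}(r_i)\neq 0$ is established, the nullity is genuinely $1$ (no accidental degeneracy beyond the orbit), and the Descartes count then pins down the Morse indices claimed.
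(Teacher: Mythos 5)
Your proposal is correct and takes essentially the same route as the paper: the paper offers no written proof of this lemma beyond its stated methodology, which is precisely a symbolic (MAPLE) computation of the $16\times16$ Hessian $\nabla_q^2\varphi$ along $w$ and of its characteristic polynomial, reading the nullity off from $a_{2N}(r_i)=0,\ a_{2N-1}(r_i)\neq 0$ and the Morse index from the block $B(w(r_i))$ after the orbit-direction reduction of Lemma \ref{hesjanpostac}. Your added certification details (rationality of the coordinates at $r_i=\sqrt{2}/k_i$, exactness of Descartes' rule for real-rooted polynomials, or an $LDL^{\top}$ inertia count) merely make explicit the rigor the paper leaves implicit in its appeal to symbolic computation.
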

So, there is no matrix $C(w(r))$ and $SO(2)_{w(r)}=Id$ for any $0<r<r_0=0.37602...$ Also, by Lemma \ref{lemma1}, we have $\dim \ker \nabla_q ^2\varphi (w(r_i),r_i) =  \dim SO(2)(w(r_i))=1$ for $i=1,2,3.$ Thus, by Theorem \ref{twbiflok1}, since $m^- (\nabla^2_{q}\varphi (w(r_1),r_1))\neq m^- (\nabla^2_{q}\varphi (w(r_2),r_2)),$ there exists a local bifurcation parameter in the segment $(r_1,r_2).$ Additionally, by Theorem \ref{twierbif1} and Theorem \ref{twchangedegree}, since the numbers $m^- (\nabla^2_{q}\varphi (w(r_2),r_2))$ and $m^- (\nabla^2_{q}\varphi (w(r_3),r_3))$ are of different parity, we have that there exists a global bifurcation parameter in $(r_2,r_3).$

Summarising, we have proved the following theorem:
\begin{Theorem}\label{thmtwosquare}$ $
\begin{enumerate}
 \item There exists a local bifurcation parameter in the segment $(r_1,r_2),$ that is $(r_1,r_2)\cap \mathcal{BIF}\neq\emptyset .$ 
 \item There exists a global bifurcation parameter in the segment $(r_2,r_3),$ that is $(r_2,r_3)\cap \mathcal{GLOB}\neq\emptyset .$ 
\end{enumerate}
\end{Theorem}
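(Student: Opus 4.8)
The plan is to extract all needed spectral data from Lemma \ref{lemma1} and feed it into the abstract results of Section \ref{bifurcations}, the crucial simplification being that $SO(2)$ acts freely on the planar configuration space. Concretely, since $SO(2)_{w(r)} = \{e\}$ for every $0 < r < r_0$, there is a single orbit type $(\{e\})$ on $\Omega$, so the block $C(w(r))$ in the Hessian decomposition of Lemma \ref{hesjanpostac} is zero-dimensional. Thus $m^-(C(w(r))) = 0$ holds automatically and, by Remark \ref{hesjanpostaccccase}, the only surviving block is $B(w(r))$, whence
\[m^-(\nabla^2_q\varphi(w(r),r)) = m^-(B(w(r)))\]
and $\det B(w(r)) \neq 0$ exactly when the orbit is non-degenerate. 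In particular Lemma \ref{lemma1} supplies non-degeneracy at $r_1, r_2, r_3$ together with $m^-(B(w(r_1))) = 1$, $m^-(B(w(r_2))) = 3$ and $m^-(B(w(r_3))) = 4$.

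For part (1) I would apply Theorem \ref{twbiflok1} with $\rho^- = r_1$ and $\rho^+ = r_2$. Its hypothesis (1) is the equality of kernel dimension and orbit dimension given by Lemma \ref{lemma1}; hypothesis (2) holds because $(G_{w(r_1)}) = (G_{w(r_2)}) = (\{e\})$; and hypothesis (3), $m^-(C) = 0$, is the free-action observation above. Since $m^-(B(w(r_1))) = 1 \neq 3 = m^-(B(w(r_2)))$, Theorem \ref{twbiflok1} immediately yields $(r_1,r_2) \cap \mathcal{BIF} \neq \emptyset$.

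For part (2) the route is Theorem \ref{twchangedegree} followed by Theorem \ref{twierbif1}, now with $\rho^- = r_2$ and $\rho^+ = r_3$. First, both orbits are non-degenerate, so the necessary condition of Theorem \ref{wkkoniecznybif} forces $r_2, r_3 \notin \mathcal{BIF}$. The hypotheses of Theorem \ref{twchangedegree} (non-degeneracy and common isotropy $(\{e\})$) are met, and its sign condition $\det B(w(r_2)) \cdot \det B(w(r_3)) < 0$ holds because $\mathrm{sign}\det B(w(r_i)) = (-1)^{m^-(B(w(r_i)))}$ and the indices $3$ and $4$ have opposite parity. Hence $\dg(\nabla_q\varphi(\cdot,r_2),\Theta^-) \neq \dg(\nabla_q\varphi(\cdot,r_3),\Theta^+)$, i.e. $BIF_{[r_2,r_3]} \neq \mathbf{0} \in U(G)$. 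Theorem \ref{twierbif1} then produces a global bifurcation from $\mathcal{F}_{[r_2,r_3]}$; since $\mathcal{GLOB} \subset \mathcal{BIF}$ and the endpoints are not in $\mathcal{BIF}$, the resulting global bifurcation parameter lies in the open interval, giving $(r_2,r_3) \cap \mathcal{GLOB} \neq \emptyset$.

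Granting Lemma \ref{lemma1}, the theorem is thus a bookkeeping exercise in parity, and the genuine difficulty sits entirely inside that lemma: the symbolic computation of the characteristic polynomial of $\nabla^2_q\varphi$ along the two-squares family and the evaluation of the three Morse indices. The one conceptual point to keep straight is the asymmetry between the two parts -- Theorem \ref{twbiflok1} needs only distinct Morse indices (the even gap $3-1$ between $r_1$ and $r_2$ is enough for a local branch), whereas the global conclusion requires the degree for equivariant gradient maps to actually change, which by Theorem \ref{twchangedegree} demands an odd gap, precisely the gap $4-3$ realized between $r_2$ and $r_3$.
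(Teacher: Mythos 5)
Your proposal is correct and follows essentially the same route as the paper: Lemma \ref{lemma1} supplies the kernel dimensions and Morse indices, Theorem \ref{twbiflok1} with the index jump $1\neq 3$ gives the local bifurcation in $(r_1,r_2)$, and the odd parity gap $3$ versus $4$ feeds Theorem \ref{twchangedegree} (via $\mathrm{sign}\det B=(-1)^{m^-(B)}$) and Theorem \ref{twierbif1} to give the global bifurcation in $(r_2,r_3)$. Your added justifications --- that the free $SO(2)$-action makes $C(w(r))$ zero-dimensional so $m^-(C)=0$ automatically, and that non-degeneracy of the endpoint orbits forces $r_2,r_3\notin\mathcal{BIF}$ by Theorem \ref{wkkoniecznybif} --- are exactly the points the paper states just before the theorem.
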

Notice that we can consider a subset of the full configuration space $\Omega$ which is invariant for the gradient flow (the set of central configurations of two nested squares type $(r,m,M)$), then studying central configurations in this set becomes a problem of studying zeros of a function $F:(0,r_0)\times(0,+\infty)^2\rightarrow\mathbb{R}$ given by the formula $F(r,m,M)=MA(r)+mB(r).$ For the trivial family of solutions $(r,m,\frac{-mB(r)}{A(r)}),$ for any $(r,m)\in (0,r_0)\times(0,+\infty),$ we have 
$F\left(r,m,M(r,m)\right)=0,$ where $M(r,m)=\frac{-mB(r)}{A(r)},$ and $F'_r\left(r,m,M(r,m)\right)=M(r,m)A'(r)+mB'(r)>0,$ so by the implicit function theorem there is no bifurcation of central configurations of two nested squares type from the trivial family. Combining Theorem \ref{thmtwosquare} and the above, we obtain that the families which bifurcate from the family \eqref{family_two_squares} are not of two nested squares type.
\begin{figure}[h!]
 \centering
  \includegraphics[height=0.4\textwidth]{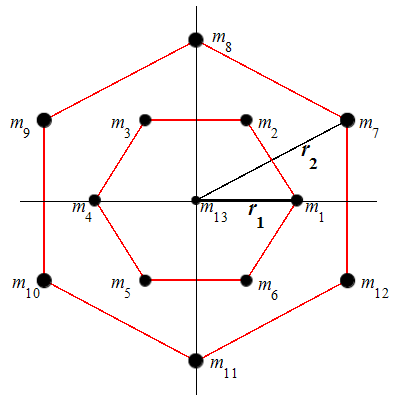}
 \caption{Thirteen bodies in rosette configuration.}\label{rosette}
 \end{figure}

 Now consider a rosette central configuration consisting of $n$ particles of mass $m_1$ lying at the vertices of a regular $n$-gon, $n$ particles of mass $m_2$ lying at the vertices of another $n$-gon, where the second one is rotated by $\frac{\pi}{n},$ and an additional particle of mass $m_0$ lying at the common centre of the two $n$-gons (see \cite{[LEIANDSANTOPRETE]} and \cite{[SEKIGUCHI]}). We analyse bifurcations from this family in the case of $13$ bodies (see Figure \ref{rosette}) which are considered in the following positions:\label{appendixB} 
\begin{center}
 $\hat{q}_{k+1}=\Phi\left(\frac{2\pi}{6}k\right)(r_1,0),\ \hat{q}_{7+k}=\Phi\left(\frac{\pi}{6}+\frac{2\pi}{6}k\right)(r_2,0)$ for $k=0,\ldots ,5$ and $\hat{q}_{13}=(0,0),$  
\end{center}
where by $\Phi$ we denote the universal covering of $SO(2)$ by $\mathbb{R}$, that is a mapping $\Phi : \mathbb{R} \to SO(2)$ given by the formula
\[
 \Phi(s) =\left( \begin{array}{lr} \cos s & -\sin s \\ \sin s & \cos s \end{array}\right).
\]
We transform the coordinates $r_1$ and $r_2$ in the following way: $r_1=r\cos \theta,\ r_2=r\sin\theta,$ where $(r,\theta)\in (0,+\infty)\times(0,\frac{\pi}{2})$ (see \cite{[LEIANDSANTOPRETE]}). 
We choose $\theta=\frac{\pi}{3},\ r=1$ and define $w:(0,+\infty)^2\rightarrow\Omega$ by 
\begin{align}\label{family_rosette}
w(m_0,m_1)=\left(\hat{q}_1,\ldots,\hat{q}_{12},\hat{q}_{13}\right),
\end{align}
where masses $m_0$ and $m_1$ are treated as parameters. 
Notice that for the two-parameter family $w$ we obtain the same results as in Theorem \ref{wkkoniecznybif} and Theorem \ref{twierbif1}. Namely, for any points $(m_0^-,m_1^-)$ and $(m_0^+,m_1^+)$ in the parameter space we can choose a one-parameter path joining these points, that is the segment connecting them.
Then the configuration $\hat{q}\!=\!(\hat{q}_1,\!\ldots\!,\hat{q}_{12},\hat{q}_{13})$ is central for each parameter $(m_0,m_1)\!\in\!(0,+\infty)^2$ if 
\begin{multline*}
m_2=m_2(m_0,m_1)= \frac{1}{ -1862\sqrt {3}-7203+810\sqrt {7}+90\sqrt {7}\sqrt {3} }\left( -7644m_0 \right. \\
\left. + 6\left(81\sqrt {7}-441\sqrt {3}+9\sqrt {3}\sqrt {7}-147 \right)m_1 \right) .
\end{multline*}
So, we define $m:(0,+\infty)^2\rightarrow(0,+\infty)^{13}$ by the formula 
\begin{multline}\nonumber
m(m_0,m_1)=(m_1,m_1,m_1,m_1,m_1,m_1,m_2(m_0,m_1),m_2(m_0,m_1),m_2(m_0,m_1),\\m_2(m_0,m_1),m_2(m_0,m_1),m_2(m_0,m_1),m_0)
\end{multline}
and compute the characteristic polynomial $W_{(m_0,m_1)}$ along the curve $w.$ 
Then, we get that 
\[a_{26}(m_0,m_1)=0\]
and \[a_{25}(m_0,m_1)=C(m_0,m_1)C_1^2(m_0,m_1)C_2(m_0,m_1) C_3^2(m_0,m_1),\]
where polynomials $C_1,\ C_2$ and $C_3$ are equal to $0$ on some curves (see Figure \ref{rosette_zeros}) and $C(m_0,m_1)\neq 0$ for $(m_0,m_1)\in(0,+\infty)^2.$  
\begin{figure}[h!]
 \centering
  \includegraphics[height=0.4\textwidth]{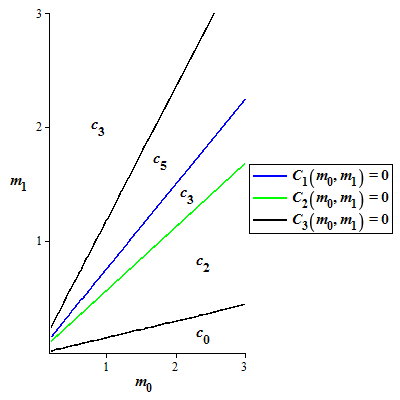}
 \caption{The set of zeros of the coefficient $a_{25}$ and regions $c_3,\ c_5,\ c_2$ and $c_0$ where $a_{25}\neq 0$.}\label{rosette_zeros}
 \end{figure}
\begin{Lemma}\label{lemma2}  
The Morse index of $\nabla^2_{q}\varphi $ along $w$ depends on $(m_0,m_1)$ as follows  
\begin{align*}
m^- (\nabla^2_{q}\varphi (w(m_0,m_1),(m_0,m_1)))=\left\{ \begin{array}{lrl}
3, & for & (m_0,m_1) \in c_3\\
5, & for & (m_0,m_1) \in c_5\\
2, & for & (m_0,m_1) \in c_2\\
0, & for & (m_0,m_1) \in c_0\\
\end{array}\right. 
\end{align*}
and 
\[\dim \ker \nabla_q^2\varphi (w(m_0,m_1),(m_0,m_1))= \dim SO(2)(w(m_0,m_1))=1\] 
for $(m_0,m_1)\in(0,+\infty)^2$ such that $C_i(m_0,m_1)\neq 0$ for all $i=1,2,3.$
\end{Lemma}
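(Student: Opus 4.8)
The plan is to treat the two assertions separately. The statement about the kernel is a direct consequence of the factorisation of $a_{25}$ recorded just before the lemma together with the general relation between $\dim\ker\nabla^2_q\varphi$ and the trailing coefficients of the characteristic polynomial $W_{(m_0,m_1)}$. Recall that $\dim\ker\nabla^2_q\varphi(w(m_0,m_1),(m_0,m_1))=k$ precisely when $a_{26}=\dots=a_{26-k+1}=0$ and $a_{26-k}\neq 0$. Since $SO(2)_{w(m_0,m_1)}=\{e\}$, the orbit direction forces $a_{26}(m_0,m_1)\equiv 0$, so $\dim\ker=1$ if and only if $a_{25}(m_0,m_1)\neq 0$. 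As $a_{25}=C\,C_1^2\,C_2\,C_3^2$ with $C(m_0,m_1)\neq 0$ on the whole parameter space, we get $a_{25}(m_0,m_1)\neq 0$ exactly when $C_i(m_0,m_1)\neq 0$ for $i=1,2,3$, which is the first claim. In this situation the block $C(w)$ of Lemma \ref{hesjanpostac} is absent, $B(w(m_0,m_1))$ is the nonsingular $25\times 25$ matrix with $a_{25}=\det B(w(m_0,m_1))$, and everything reduces to counting the negative eigenvalues of $B$.

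For the Morse index I would first reduce to a finite computation by a continuity argument. On each of the open regions $c_3,c_5,c_2,c_0$ we have $a_{25}\neq 0$, hence $B(w(m_0,m_1))$ is nonsingular throughout and exactly one eigenvalue of the full Hessian vanishes. Because the entries of $\nabla^2_q\varphi(w(m_0,m_1),(m_0,m_1))$ depend real-analytically on $(m_0,m_1)$, its eigenvalues vary continuously; no nonzero eigenvalue can change sign inside a region without passing through $0$, which would produce a second kernel direction and force $a_{25}=0$. Thus $m^-$ is locally constant, and since each $c_i$ is connected (as is evident from Figure \ref{rosette_zeros}) it is constant on $c_i$. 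It then suffices to evaluate $m^-(B(w(m_0,m_1)))$ at a single representative point of each region.

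The remaining work is the explicit evaluation, which I would carry out in MAPLE as indicated earlier: substitute a sample $(m_0,m_1)$ from each region into $w$ and $m$, form the $26\times 26$ Hessian with $\lambda=\lambda(m_0,m_1)$, and count its negative eigenvalues, obtaining $3,5,2,0$ on $c_3,c_5,c_2,c_0$ respectively. To keep this tractable I would exploit the cyclic/reflection symmetry of the $13$-body rosette: the discrete symmetry group commutes with the Hessian, so $B$ block-diagonalises along the isotypic decomposition of the representation. This both explains the squared factors $C_1^2,C_3^2$ (eigenvalues entering in symmetry-forced pairs, so that a crossing changes $m^-$ by an even number) and the simple factor $C_2$ (a one-dimensional isotypic component, whose crossing flips the parity of $m^-$), and it reduces the eigenvalue count to a few small blocks. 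The main obstacle is precisely the size and algebraic complexity of this computation, since the entries involve the nested radicals $\sqrt3,\sqrt7$ appearing in $m_2(m_0,m_1)$; the delicate point is therefore to perform the sign determinations in exact arithmetic (or with certified numerics) so that the counts $3,5,2,0$ are rigorous rather than merely numerical.
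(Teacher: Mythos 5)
Your proposal is correct and follows essentially the same route as the paper, which offers no written proof of this lemma beyond the MAPLE methodology described in Section \ref{applications}: compute the characteristic polynomial along $w$, use the stated criterion that $\dim\ker\nabla^2_q\varphi=k$ iff $a_{26}=\dots=a_{26-k+1}=0$ and $a_{26-k}\neq 0$ together with the factorisation $a_{25}=C\,C_1^2\,C_2\,C_3^2$, and determine the Morse index region by region. Your additions — the local-constancy-of-$m^-$ argument justifying evaluation at one sample point per connected region, and the symmetry-based block diagonalisation as a computational aid — are sound refinements of what the paper leaves implicit.
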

So, for any $(m_0,m_1)\in (0,+\infty)^2$ we have that $SO(2)_{w(m_0,m_1)}=Id$ and 
there is no matrix $C(w(m_0,m_1)).$ 
Lemma \ref{lemma2} now shows that $\dim \ker \nabla_q^2\varphi (w(m_0,m_1),(m_0,m_1))=\dim SO(2)(w(m_0,m_1))=1$ for $(m_0,m_1)\!\!\in\!\!(0,+\infty)^2$ such that $C_i(m_0,m_1)\neq 0$ for all $i\in\{1,2,3\}.$ Thus, by Theorem \ref{twbiflok1}, since the numbers $m^- (\nabla^2_{q}\varphi (w(m_0,m_1),(m_0,m_1)))$ are different in the regions $c_3,\ c_5,\ c_2$ and $c_0,$ we have $(m_0,m_1)\in\mathcal{BIF}$ for $(m_0,m_1)\in(0,+\infty)^2$ with $C_i(m_0,m_1)=0,$ where $i\in\{1,2,3\}.$ Moreover, by Theorem \ref{twierbif1} and Theorem \ref{twchangedegree}, since the numbers $m^- (\nabla^2_{q}\varphi (w(m_0,m_1),(m_0,m_1)))$ are of different parity in the regions $c_3$ and $c_2,$ we have $(m_0,m_1)\in\mathcal{GLOB}$ for $(m_0,m_1)\in(0,+\infty)^2$ such that $C_2(m_0,m_1)=0.$

Summarising, we have proved the following theorem:
\begin{Theorem}\label{thmrosette} $ $
\begin{enumerate}
 \item For any $i\in\{1,2,3\},$ if $C_i(m_0,m_1)=0,$ then $(m_0,m_1)\in\mathcal{BIF}.$   
 \item If $C_2(m_0,m_1)=0,$ then $(m_0,m_1)\in\mathcal{GLOB}.$ 
\end{enumerate}
\end{Theorem}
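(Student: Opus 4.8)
The plan is to reduce the two–parameter problem to the one–parameter setting in which Theorems \ref{twbiflok1}, \ref{twierbif1} and \ref{twchangedegree} are stated, and then to localise the bifurcation to the prescribed degenerate point by invoking the necessary condition of Theorem \ref{wkkoniecznybif}. Throughout I would use that along $w$ the isotropy is trivial, $SO(2)_{w(m_0,m_1)}=Id$, so there is a single orbit type $(\{e\})$, the matrix $C(w(m_0,m_1))$ is zero–dimensional (hence $m^-(C)=0$ and every non–degenerate critical orbit is special), and consequently $m^-(B(w(m_0,m_1)))$ coincides with $m^-(\nabla^2_q\varphi(w(m_0,m_1),(m_0,m_1)))$, the quantity tabulated in Lemma \ref{lemma2}. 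A critical orbit is degenerate precisely when $\dim\ker\nabla^2_q\varphi>1$, i.e. when $a_{25}=CC_1^2C_2C_3^2$ vanishes, that is on the union $\{C_1=0\}\cup\{C_2=0\}\cup\{C_3=0\}$.

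For part (1), fix $p=(m_0,m_1)$ with $C_i(p)=0$ for some $i\in\{1,2,3\}$ at a regular point of the corresponding curve. First I would choose a short segment $\sigma$ in parameter space through $p$, crossing $\{C_i=0\}$ transversally and meeting the whole zero set $\{C_1C_2C_3=0\}$ only at $p$, so that its endpoints $p^{\pm}$ lie in the two adjacent regions among $c_3,c_5,c_2,c_0$. Restricting $w$ and $m$ to $\sigma$ produces a one–parameter family to which Theorem \ref{twbiflok1} applies: both endpoints are non–degenerate by Lemma \ref{lemma2}, share the orbit type $(\{e\})$, and satisfy $m^-(C)=0$, while the Morse indices $m^-(B(p^{-}))$ and $m^-(B(p^{+}))$ differ because $p^{\pm}$ lie in distinct regions of Figure \ref{rosette_zeros}. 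Theorem \ref{twbiflok1} then yields a local bifurcation parameter in the open segment, and Theorem \ref{wkkoniecznybif} forces every such parameter to correspond to a degenerate orbit; since $p$ is the only degenerate orbit on $\sigma$, that parameter is $p$, whence $(m_0,m_1)\in\mathcal{BIF}$. Note that across $\{C_1=0\}$ and $\{C_3=0\}$ the index changes by an even number, so $(-1)^{m^-(B)}$ and hence the degree are unchanged; this is precisely why the Conley–index criterion of Theorem \ref{twbiflok1}, sensitive to even changes, is the right tool here and why no global conclusion is available for $i=1,3$.

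For part (2), fix $p$ with $C_2(p)=0$ and argue analogously, but crossing $\{C_2=0\}$ between the regions $c_3$ and $c_2$, where the Morse indices equal $3$ and $2$. The key point is that $C_2$ enters $a_{25}$ with odd multiplicity, so $a_{25}$ changes sign across $\{C_2=0\}$; since $a_{25}$ equals, up to a fixed sign independent of $(m_0,m_1)$, the product of the non–zero eigenvalues, i.e. $\det B$, and $\mathrm{sign}(\det B)=(-1)^{m^-(B)}$, this is exactly the statement that $m^-(B(p^{-}))$ and $m^-(B(p^{+}))$ have opposite parity, hence $\det B(p^{-})\cdot\det B(p^{+})<0$. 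Theorem \ref{twchangedegree} then gives $BIF_{[\rho^-,\rho^+]}\neq\mathbf{0}\in U(SO(2))$ for the restricted family. Arranging $\sigma$ so that $p$ is the unique degenerate orbit on it, Corollary \ref{twbif1corollary} (equivalently Theorem \ref{twierbif1} together with Theorem \ref{wkkoniecznybif}) identifies $p$ as the unique orbit of local bifurcation on $\sigma$ and certifies that it is in addition a global bifurcation orbit, so $(m_0,m_1)\in\mathcal{GLOB}$.

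The symbolic facts, namely that Lemma \ref{lemma2} correctly records the Morse indices and that $a_{25}=CC_1^2C_2C_3^2$, are assumed here and are routine (if laborious) to verify. The genuine obstacle is the one–parameter reduction together with the localisation step: I must guarantee that through $p$ there is a segment crossing a single curve $\{C_i=0\}$ transversally, entering two regions with distinct (respectively opposite–parity) Morse indices and carrying no other degenerate orbit. This is unproblematic at a regular point of the curves, but would demand separate treatment at singular points of $\{C_1C_2C_3=0\}$ — for instance intersections of two of the curves — where transversality and the clean two–region picture can fail; handling or excluding such points is where the argument needs the most care.
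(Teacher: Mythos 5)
Your proposal is correct and follows essentially the same route as the paper: the paper likewise reduces the two-parameter family to one-parameter segments crossing the curves $\{C_i=0\}$, applies Theorem \ref{twbiflok1} (with $SO(2)_{w}=Id$, no matrix $C$, and the Morse indices from Lemma \ref{lemma2}) for part (1), and uses Theorems \ref{twierbif1} and \ref{twchangedegree} with the odd parity change between $c_3$ and $c_2$ for part (2). Your explicit localisation of the bifurcation to the point $p$ via Theorem \ref{wkkoniecznybif}, and your caveat about intersection points of the curves $\{C_1C_2C_3=0\}$, are details the paper passes over in silence, so your write-up is if anything more careful than the original.
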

Notice that we can consider a subset of the full configuration space $\Omega$ which is invariant for the gradient flow (the set of central configurations of rosette type $(\frac{r_2}{r_1},m_0,m_1,m_2)$), then studying central configurations in this set becomes a problem of studying zeros of a function $F:(0,+\infty)^3\rightarrow\mathbb{R},\ F(x,\varepsilon,\mu)$ given by the formula $(6)$ from \cite{[LEIANDSANTOPRETE]}, where $x=\frac{r_2}{r_1},\ \varepsilon=\frac{m_2}{m_1}$ and $\mu=\frac{m_0}{m_1}.$ For the trivial family of solutions \eqref{family_rosette}, for any $m_0,\ m_1 \in(0,+\infty),$ we have $F\left(\sqrt{3},\frac{m_2(m_0,m_1)}{m_1},\frac{m_0}{m_1}\right)=0$ and $F'_x\left(\sqrt{3},\frac{m_2(m_0,m_1)}{m_1},\frac{m_0}{m_1}\right)>0,$ so by the implicit function theorem there is no bifurcation of central configurations of rosette type from this family. Combining Theorem \ref{thmrosette} and the above, we obtain that the families which bifurcate from the trivial one are not of 
rosette type.

\begin{figure}[h!]
 \centering
  \includegraphics[height=0.4\textwidth]{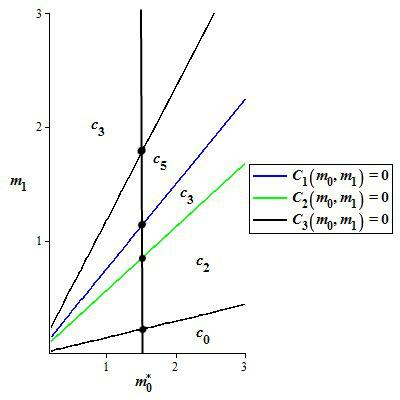}
 \caption{One-parameter family of rosette configuration where $m_1$ is a parameter and the set of four parameters of local bifurcation.}\label{rosette_zeros_oneparameter}
 \end{figure}
Now we fix $m_0^*\in (0,+\infty)$ and treat $w(m_0^*,\cdot )$ and $m(m_0^*,\cdot )$ as one-parameter families where $m_1$ is a parameter. Observe that there are exactly four parameters $m_1$ for which critical orbits $SO(2)(w(m_0^*,m_1))$ are degenerate and there is at most one parameter $m_1^*$ such that $(m_0^*,m_1^*)\in\mathcal{GLOB}$ (see Figure \ref{rosette_zeros_oneparameter}). Then choose $\varepsilon>0$ (small enough) such that $(m_0^*,m_1^*\pm\varepsilon)\notin\mathcal{BIF}$ and there is only one degenerate critical orbit $SO(2)(w(m_0^*,m_1^*))$ in the segment $[m_1^*-\varepsilon,m_1^*+\varepsilon].$ It follows that
\begin{multline*}
BIF_{[m_1^*-\varepsilon,m_1^*+\varepsilon]}\!\!=\!\dg(\nabla_v\varphi(\cdot ,(m_0^*, m_1^*+\varepsilon)),\Theta^+)- \dg(\nabla_v\varphi(\cdot, (m_0^* ,m_1^*-\varepsilon)) ,\Theta^-)=\\
=\!(-1)^{m^-(B(w(m_0^*,m_1^*+\varepsilon)))}\!\chi_{SO(2)}(SO(2)/\{Id\}^+)-(-1)^{m^-(B(w(m_0^*,m_1^*-\varepsilon)))}\!\chi_{SO(2)}(SO(2)/\{Id\}^+)\\
=((-1)^{m^-(B(w(m_0^*,m_1^*+\varepsilon)))}-(-1)^{m^-(B(w(m_0^*,m_1^*-\varepsilon)))})\chi_{SO(2)}(SO(2)/\{Id\}^+)=\\
=((-1)^{3}-(-1)^{2})\chi_{SO(2)}(SO(2)/\{Id\}^+)=-2\chi_{SO(2)}(SO(2)/\{Id\}^+)\neq\mathbf{0}\in U(G). 
\end{multline*}
Similarly, for any $m_1\neq m_1^*$ since the numbers $m^-(B(w(m_0^*,m_1\pm\tilde{\varepsilon})))$ are of the same parity, we conclude that $BIF_{[m_1-\tilde{\varepsilon},m_1+\tilde{\varepsilon}]}=\mathbf{0},$ where $\tilde{\varepsilon}$ is chosen as above.   
Therefore, by Theorem \ref{twierbif2}, $C([m_1^*-\varepsilon,m_1^*+\varepsilon])$ is not compact.

\end{document}